\numberwithin{equation}{section}
\theoremstyle{plain} 
\newtheorem{theorem}{Theorem}
\numberwithin{theorem}{section}
\newtheorem*{theorem*}{Theorem}
\newtheorem{prop}[theorem]{Proposition}
\newtheorem{lemma}[theorem]{Lemma}
\newtheorem{coro}[theorem]{Corollary}
\newtheorem{conj}[theorem]{Conjecture}
\theoremstyle{definition}
\newtheorem{definition}[theorem]{Definition}
\newtheorem*{claim}{Claim} 
\newtheorem{example}[theorem]{Example}
\theoremstyle{remark}
\newtheorem{remark}[theorem]{Remark}
\theoremstyle{definition}
\newtheorem{thmx}{Theorem}
\DeclarePairedDelimiter\ceil{\lceil}{\rceil}
\DeclarePairedDelimiter\floor{\lfloor}{\rfloor}
\newcommand{\bbC}{\mathbb{C}}
\newcommand{\Q}{\mathbb{Q}}
\newcommand{\cA}{\mathcal{A}}
\newcommand{\cB}{\mathcal{B}}
\newcommand{\cG}{\mathcal{G}}
\newcommand{\cH}{\mathcal{H}}
\newcommand{\cJ}{\mathcal{J}}
\newcommand{\cK}{\mathcal{K}}
\newcommand{\cL}{\mathcal{L}}
\newcommand{\cM}{\mathcal{M}}
\newcommand{\cO}{\mathscr{O}}
\newcommand{\rarr}{\longrightarrow}
\newcommand{\sL}{\mathscr{L}}
\newcommand{\sF}{\mathscr{F}}
\newcommand{\sE}{\mathscr{E}}
\newcommand{\sO}{\mathscr{O}}
\newcommand{\sA}{\mathscr{A}}
\newcommand{\shTA}{\mathscr{T}}
\newcommand{\sD}{\mathscr{D}}
\newcommand{\Var}{\textup{Var}}
\DeclareMathOperator{\supp}{supp}
\DeclareMathOperator{\red}{red}
\DeclareMathOperator{\gr}{gr}
\mathchardef\ordinarycolon\mathcode`\:
\title{Hyperbolicity for log smooth families with maximal variation}
\author{Chuanhao Wei}
\address{Department of Mathematics, Stony Brook University, 100 Nicolls Rd, Stony Brook, NY 11794, USA}
\email{\tt chuanhao.wei@stonybrook.edu}
\author{Lei Wu}
\address{Department of Mathematics, University of Utah,
155 S 1400 E,  Salt Lake City, UT 84112, USA}
\email{\tt lwu@math.utah.edu}
\begin{document}

\begin{abstract}
We prove that the base space of a log smooth family of log canonical pairs of log general type is of log general type as well as algebraically degenerate, when the family admits a relative good minimal model over a Zariski open subset of the base and the relative log canonical model is of maximal variation. %As an application, we prove that the moduli stack of Riemann surfaces of genus $g$ with $n$ marked points, $\cM_{g,n}$, is of log general type and Brody hyperbolic. 
\end{abstract}
\maketitle
\section{Introduction}
In this article, we study hyperbolic properties of the base spaces of families of log canonical pairs. Recall that a smooth quasi-projective variety $V$ is of log general type, if it has a log smooth projective compactification $(Y, D)$, with the log canonical bundle $\omega_Y(D)$ big. More generally, a quasi-projective variety is called of log general type, if it is so after a desingularization.
%\begin{definition}
%We say that a quasi-projective variety $V$ is of log general type, if it has a log smooth projective model $(X, D)$, which means $X$ smooth and projective, $D$ reduced and simply normal crossing (SNC), and a birational morphism $\pi(X\setminus D)=V$, such that $(X,D)$ is of log general type, i.e. $\omega_X(D)$ is a big line bundle.
%\end{definition}
Our main result is the following.
\begin{thmx}\label{thm:main}
Let $f_V\colon (U, D_U)\to V$ be a log smooth family of log canonical pairs over a smooth quasi-projective variety $V$, with general fibers being of log general type. If $f_V$ admits a relative good minimal model over a Zariski open subset of $V$ and it is of maximal variation, then 
\begin{enumerate}
    \item $V$ is of log general type; 
    \item every holomorphic map $\gamma\colon \bbC\to V$ is algebraically degenerate, that is, the image of $\gamma$ is not Zariski dense.   
\end{enumerate}
\end{thmx}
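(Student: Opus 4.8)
The plan is to reduce the theorem to the now-classical hyperbolicity results of Popa–Schnell, Campana–Paun, Taji and others for families of manifolds of general type, by replacing the log smooth family $f_V\colon (U,D_U)\to V$ with an auxiliary family of \emph{projective} varieties whose variation is still maximal and whose base space carries the same birational information. Concretely: after shrinking $V$, the assumption that $f_V$ admits a relative good minimal model over a Zariski open subset gives a relative log canonical model $f^c_V\colon (U^c, D^c)\to V$ whose fibers are log canonical models of log general type; the canonical bundle formula (Fujino–Mori, Ambro) lets us write $K_{U^c}+D^c$ as the pullback of a $\Q$-divisor plus a boundary, and taking an appropriate cyclic cover / resolution produces an honest smooth family $g\colon Z\to V'$ over a modification $V'$ of $V$ with fibers of general type and with $\Var(g)=\dim V$. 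The first step, then, is to carry out this construction carefully and to check that maximal variation of $f_V$ is inherited by $g$.

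Second, I would invoke the positivity of the Hodge-theoretic (or Viehweg–Zuo) sheaves: associated to the family $g$ there is, on a log smooth compactification $(\overline{V}, B)$ of $V'$, a big line bundle $\cL$ together with a nonzero map $\cL \hookrightarrow \bigl(\Omega^1_{\overline{V}}(\log B)\bigr)^{\otimes m}$ for some $m$ — a Viehweg–Zuo sheaf — whose existence follows from Viehweg–Zuo's construction applied to a family of maximal variation (this is the mechanism behind all recent Brody/Green–Griffiths type theorems for moduli). The bigness of $\cL$ uses precisely $\Var(g)=\dim V$. From here part (1) follows by the Campana–Paun theorem on pseudo-effectivity of quotients of $\Omega^1_{\overline V}(\log B)$: a big subsheaf of a tensor power of $\Omega^1_{\overline V}(\log B)$ forces $\omega_{\overline V}(B)$ to be big, i.e.\ $V$ is of log general type.

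Third, for part (2), the existence of a Viehweg–Zuo sheaf on $(\overline V, B)$ is exactly the input of the Brody-hyperbolicity machinery: I would follow the argument of Popa–Taji–Wu / Deng (``logarithmic base loci'' and the use of Nevanlinna theory, or alternatively the foliation-theoretic argument) showing that any holomorphic $\gamma\colon \bbC\to V$ must be tangent to a proper ``bad'' subvariety, hence cannot be Zariski dense; equivalently one shows $V$ is pseudo-Brody hyperbolic. The key point is that a nonconstant entire curve would pull back the Viehweg–Zuo sheaf to produce a nonzero section with bad growth, contradicting the logarithmic derivative lemma / Demailly's semistability estimates unless the curve is degenerate.

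The main obstacle, and the part requiring genuine care rather than citation, is the \emph{first} step: producing from the log smooth family of log canonical (not necessarily klt, possibly singular, non-proper) pairs an auxiliary family to which the existing Hodge-module and Viehweg–Zuo technology applies, while rigorously preserving maximal variation. The log canonical (as opposed to klt) hypothesis means one must handle the boundary and the non-klt locus via the canonical bundle formula and base change, and the relative good minimal model hypothesis over only a Zariski open subset of $V$ means the construction is a priori only birational — so one must track everything on a common log resolution and argue that the numerical/positivity conclusions descend to $(\overline V, B)$. Once the family $g\colon Z\to V'$ and its Viehweg–Zuo sheaf are in hand, steps two and three are essentially an application of Campana–Paun and the by-now standard entire-curve arguments.
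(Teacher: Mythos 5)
Your proposal takes a genuinely different route from the paper, but it has real gaps that go beyond implementation details.

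\textbf{The cyclic-cover reduction does not obviously preserve variation.} Your first step proposes to replace the log smooth family of pairs by ``an honest smooth family $g\colon Z\to V'$'' of general-type varieties via cyclic covers branched over the boundary, and then invoke Popa–Schnell, Campana–P\u aun, etc.\ as black boxes. You correctly flag that preserving $\Var(g)=\dim V$ is ``the part requiring genuine care,'' but you give no argument, and I don't think it's true without further work: two non-isomorphic log canonical models $(F_1,D_1)$, $(F_2,D_2)$ can have isomorphic cyclic covers (the isomorphism need not be equivariant), so the variation of $g$ could strictly drop. The paper does \emph{not} reduce to the variety case; it generalizes the entire Higgs-sheaf and Viehweg–Zuo machinery directly to the pair case. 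The only coefficient manipulation it does is the reduction from log canonical to klt boundary coefficients via a perturbation lemma (Lemma~\ref{L:compare var}), whose proof uses finiteness of models (\cite{BCHM}) and a careful extremal-ray argument to show $\Var(f)\le \Var(f^{\epsilon_0})$; this lemma preserves the variation precisely because the original relative log canonical model can be recovered from the perturbed one, an invertibility that has no clear analogue for a cyclic cover. Also, the canonical bundle formula (Fujino–Mori, Ambro) that you invoke is for fibrations with relatively (semi-)trivial log canonical class, whereas here $K_{(U^c,D^c)/V}$ is relatively ample, so it is not the relevant tool.

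\textbf{Part (2) requires more than a Viehweg–Zuo sheaf.} For part (1), a big subsheaf of a tensor power of $\Omega^1_{\overline V}(\log B)$ together with Campana–P\u aun is indeed the endgame, and your sketch of this matches the paper modulo the reduction issues above. For part (2), however, ``the existence of a Viehweg–Zuo sheaf is exactly the input'' is not accurate: the algebraic degeneracy argument needs the graded Higgs sheaf $(\sF_\bullet,\theta_\bullet)$ \emph{together with} the inclusion into the Higgs bundle $\sE_\bullet$ of a geometric VHS defined on the \emph{entire} complement of a simple normal crossing divisor, so that Deng's Torelli-type theorem (Theorem~\ref{th:Deng}) gives injectivity of the Kodaira–Spencer map $\shTA_Y(-\log E)\to \cL^{-1}\otimes\sE_1$ on all of $Y$. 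This global extension of the Higgs sheaf is precisely the content of Theorem~\ref{thm:refineHiggs}, which the paper emphasizes as its main technical innovation and which does not follow from codimension-one constructions. Without it, a Zariski-dense entire curve could escape the locus where the Higgs structure is defined. Your appeal to ``the logarithmic derivative lemma / Demailly's semistability estimates'' is not what is used; the contradiction comes from the curvature of the twisted Hodge metric via the Ahlfors–Schwarz lemma, following \cite{PTW18} and \cite{Den18b}.
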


%The above theorem applies in particular to log smooth families of klt pairs, since the existence of the relative log canonical models was established in \cite{BCHM}. For the definition of variation, we follow \cite[Def. 6.16]{KP16}, which measures how much the isomorphic class of stable pairs varies in family. By the log general type assumption, the relative log canonical model (if exists) is birational to the original family of pairs. Hence, if $D_U$ is trivial, the variation of the relative canonical model is exactly the variation of $f_V$ in the sense of Viehweg \cite{VZ83}, if one applies \cite[Cor. 6.20]{KP16}. As a consequence, Theorem \ref{thm:lcfamily} recovers \cite[Thm. A (i)]{PS17} together with \cite[Thm. 1.2]{PTW18}.
%and \cite[Thm. B]{Den18b}.
When $D_U$ is trivial and more generally the pair $(U, D_U)$ is further assumed to be Kawamata log terminal, Theorem \ref{thm:main} applies, as the assumption on the existence of good minimal models is guaranteed by \cite{BCHM}. 

%In fact, a result regarding the locus of entire curves, slightly stronger than the second conclusion, can be proved if one applies Deng's method; see Corollary \ref{cor:another} for the statement and also the remarks in \S \ref{scn:kobayashi} for the disucssion of nondegeneracy of Kobayashi-Royden pseudometrics.

The conjecture of Green-Griffiths-Lang \cite{GG80, Lan86} predicts that for a log general type variety, every holomorphic map from $\bbC$ to the variety is algebraically degenerate. As a consequence of Theorem \ref{thm:main}, the conjecture holds for the base space $V$.
%In particular, the first conclusion of Theorem \ref{thm:main} generalizes \cite[Theorem A]{PS17} in the general type case, while the second  generalizes \cite[Theorem 1.2]{PTW18}.

%\begin{theorem}
%Let $f_V\colon (U, D_U)\to V$ be a log smooth family of general type klt pairs over a smooth quasi-projective variety $V$. If $\Var^s{f_V}=\dim V$, then 
%\begin{enumerate}
%    \item $V$ is of log general type;
%    \item every holomorphic map $\gamma\colon \bbC\to V$ is algebraically degenerate.  
%\end{enumerate}

%\end{theorem}
%\subsection*{A refined Higgs sheaf on the base}
In this article, the main technical tool we develop to prove the above main result is the existence of the Higgs sheaf on the base, as in the following theorem. It not only naturally generalizes the Higgs sheaf constructed in \cite{PS17} to the log family case, but also has been constructed on the whole base space, up to a birational base change. It is essential in proving Theorem \ref{thm:main} (2).
%; see Remark \ref{rm:main}. 
%and could be useful for later applications to analytic hyperbolicity problems.
\begin{thmx}\label{thm:refineHiggs}
Let $f_V\colon (U, D_U)\to V$ be a log smooth family of log general type klt pairs with maximal variation. After replacing $V$ by a birational model, there exist a smooth projective compactification $Y$ of $V$ with $E=Y\setminus V$ a simple normal crossing divisor, a nef and big line bundle $\cL$ and a graded $\sA_Y^\bullet(-\log E)$-module $(\sF_\bullet,\theta_\bullet)$ together with  an inclusion of graded $\sA_Y^\bullet(-\log (E+S))$-modules
\[(\sF_\bullet,\theta_\bullet)\subseteq (\sE_\bullet,\theta_\bullet)\]
satisfying
\begin{enumerate}[label=(\alph*)]
\item$\sF_k$ is reflexive for each $k\ge0$ and $\cL\subseteq \sF_0$.
\item $(\sE_\bullet,\theta_\bullet)$ is the Higgs bundle associated to the Deligne extension with eigenvalues in $[0,1)$ of a geometric VHS defined outside of a simple normal crossing divisor $E+S$.
\end{enumerate}
\end{thmx}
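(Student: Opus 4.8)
The plan is to produce $(\sF_\bullet,\theta_\bullet)$ as a logarithmic Viehweg--Zuo sheaf, in the spirit of Viehweg--Zuo and of \cite{PS17}, but now carried out over a full compactification of a birational model of $V$ rather than over a Zariski-open subset. Three ingredients enter: a nef and big line bundle $\cL$ obtained from the relative log canonical model of $f_V$ via weak positivity of direct images together with the maximal-variation hypothesis; a geometric variation of Hodge structure built from a logarithmic cyclic cover, whose Deligne extension supplies the Higgs bundle $(\sE_\bullet,\theta_\bullet)$ of (b); and the graded sub-Higgs-sheaf generated by $\cL$ under iterated Higgs fields, which one saturates to make reflexive.

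Since the fibers of $f_V$ are klt pairs of log general type, each admits a log canonical model by \cite{BCHM}, and these fit into a relative log canonical model $f^{c}\colon(X^{c},\Delta^{c})\to V^{\circ}$ over a Zariski-open $V^{\circ}\subseteq V$, still of maximal variation. For $m$ sufficiently divisible $f^{c}_{*}\,\omega_{X^{c}/V^{\circ}}^{\otimes m}(m\Delta^{c})$ is weakly positive (in the logarithmic setting this is due to Fujino and others), and by Viehweg's fiber-product trick the maximal variation turns this into bigness of a line bundle built from $\det f^{c}_{*}\,\omega_{X^{c}/V^{\circ}}^{\otimes m}(m\Delta^{c})$; a birational modification of $V$ then produces a smooth projective compactification $Y\supseteq V$ with $E=Y\setminus V$ simple normal crossing, a nef and big line bundle $\cL$ on $Y$ (nef-ness arranged by a further blow-up), and, on a smooth model of the total space, a relative log pluricanonical line bundle carrying many sections of its twist by the pullback of $\cL^{-1}$. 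Feeding these sections into Kawamata's cyclic covering construction in its logarithmic form gives a projective morphism $h\colon Z\to Y$ that, over the complement of a simple normal crossing divisor $E+S\subseteq Y$ --- where $S$ accounts for the extra degeneration loci created by the cover and its resolution --- is smooth with simple normal crossing relative boundary. A suitable summand of $R^{w}h_{*}\Q$ ($w$ the relative dimension), after a further base modification making the local monodromies unipotent, is a geometric $\Q$-VHS on $Y\setminus(E+S)$; its Deligne extension to $Y$ with residue eigenvalues in $[0,1)$ has associated graded Higgs bundle $(\sE_\bullet,\theta_\bullet)$, a graded $\sA_Y^\bullet(-\log(E+S))$-module with $\theta_k\colon\sE_k\to\sE_{k+1}\otimes\Omega^1_Y(\log(E+S))$. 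This establishes (b).

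The cyclic-cover structure yields an inclusion $\cL\hookrightarrow\sE_0$ over $Y\setminus(E+S)$. To extend it over all of $Y$ and to control its iterates I would analyze the orders of poles along $E+S$ by Schmid's nilpotent orbit theorem, or --- more robustly --- by identifying $\sE_\bullet$ with the associated graded $\grf$ of the filtered logarithmic $\sD_Y(\log(E+S))$-module of the VHS and invoking the functoriality and strictness of Saito's Hodge modules; the key point is that, because $\cL$ descends from the log pluricanonical bundle of $f_V$, which is smooth over $Y\setminus E$, the images of $\cL$ and of $\cL\otimes\mathrm{Sym}^k\shTA_Y(-\log E)$ under iterated Higgs fields acquire poles only along $E$. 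One then sets $\sF_k$ to be the reflexive hull inside $\sE_k$ of this image. By construction $\sF_k$ is reflexive with $\cL\subseteq\sF_0$, which is (a); $\sF_\bullet$ is a graded $\sA_Y^\bullet(-\log E)$-module since it is generated over $\mathrm{Sym}^\bullet\shTA_Y(-\log E)$ by $\cL$; and the inclusion $(\sF_\bullet,\theta_\bullet)\subseteq(\sE_\bullet,\theta_\bullet)$ is automatically one of $\sA_Y^\bullet(-\log(E+S))$-modules.

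\textbf{Main obstacle.} The delicate part is the global version of the last step: ensuring, after the various birational modifications, that $\cL$ itself --- not merely a negatively twisted copy of it --- embeds into $\sE_0$ as an honest subsheaf, and that the reflexive hulls $\sF_k$ remain inside $\sE_\bullet$ rather than only inside its meromorphic extension across $S$, with Higgs field having log poles along $E$ alone. This forces a careful choice of model (semistable reduction along $E+S$, unipotent monodromy, compatibility of the resolution of the cyclic cover with $E+S$), and the pole-order bookkeeping is cleanest when phrased through Saito's mixed Hodge modules, whose compatibility with restriction and pushforward and whose control of lowest graded pieces supply the required estimates. The logarithmic aspect --- the boundary $D_U$ and its trace $\Delta^{c}$ on the log canonical model --- adds bookkeeping to the positivity and covering steps (logarithmic weak positivity, logarithmic Kawamata cover) but introduces no new conceptual difficulty beyond the non-logarithmic case treated in \cite{PS17}.
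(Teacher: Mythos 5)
Your proposal follows the paper's overall template — log canonical model and maximal variation to get positivity, a section of a twisted log pluricanonical bundle feeding a cyclic cover, the geometric VHS and its Deligne extension, and reflexive hulls of iterated Higgs-field images — so the architecture matches. However, you have flagged but not resolved the central difficulty, and the paper resolves it by a specific mechanism you do not reproduce.

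First, the reason the iterated Higgs field has log poles only along $E$ (and not along all of $E+S$) is not a soft consequence of ``$\cL$ descending from a bundle smooth over $Y\setminus E$''. The paper proves a coherence statement (its Lemma~6.1): over the log-smooth locus $U_s$, the graded pushforward $\widetilde\cG^i_\bullet$ vanishes in high degrees, established via the log Spencer resolution of $f^*\sA^\bullet_Y$ and a comparison of $\gr^F_\bullet\cO_X(*D)$ with the logarithmic relative Koszul complex. Without this, there is no a priori reason the Kodaira--Spencer maps annihilate $\shTA_Y(-\log E)$ modulo $\shTA_Y(-\log(E+S))$ over $U_s\cap(S\setminus E)$, and the claim that $\sF_\bullet$ is a graded $\sA^\bullet_Y(-\log E)$-module rather than only a $\sA^\bullet_Y(-\log(E+S))$-module remains unjustified. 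Second, your ``careful choice of model; semistable reduction along $E+S$'' is circular as stated: $S$ is a divisor that appears only after you have chosen the section, taken the cyclic cover, and identified where the resulting VHS degenerates, yet you want to choose the model so that $E+S$ is SNC \emph{before} running the construction. The paper breaks this circularity by first running the construction on one model to determine $S$ (getting $\sE_\bullet$ only away from the non-SNC locus $\Lambda$ of $E+S$), then passing to a log resolution $\psi\colon\tilde Y\to Y$ of $E+S$, and crucially invoking its Proposition~5.3 to \emph{lift the chosen section} to a birational model of $f$ over $\tilde Y$, coherent with the original over $\psi^{-1}(V\setminus S)$. This section-lifting is what guarantees the VHS and $\cM$ do not change over $V\setminus S$, so that after the modification $\Lambda=\varnothing$ and the previous construction literally extends. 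That proposition in turn rests on the stable-reduction machinery (moduli of stable log varieties, replacing Viehweg's mild reduction) and on Cao's analytic extension theorem to get generic global generation of $f_*(\omega^k_{X/Y}(kD_X)\otimes\cA(E)^{-k})$ over a full projective compactification --- ingredients your sketch leaves implicit in ``weak positivity of direct images'' over a Zariski-open $V^{\circ}$. Without the section-lifting step, your reflexive hulls $\sF_k$ land in a Higgs sheaf that is only defined away from a codimension-two set, and you cannot conclude (b) as stated.
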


\subsection*{Previous results and conjectures}
Theorem \ref{thm:main} (1) in the case of families of canonically polarized manifolds, is known as the Viehweg hyperbolicity conjecture and was first proved by Campana and P\u aun \cite{CP15} based on the so-called Viehweg-Zuo sheaves constructed in \cite{VZ03}. Popa and Schnell \cite{PS17} generalized the construction for the families of projective varieties with the geometric generic fibers admitting good minimal models using Saito's theory of Hodge modules, and proved the Viehweg hyperbolicity conjecture also for these families, in particular, Theorem \ref{thm:main} (1) in the case of families of  smooth projective varieties of general type.  

Theorem \ref{thm:main} (2) in the case of families of canonically polarized manifolds, was proved by Viehweg and Zuo. They used it as an intermediate step proving Brody hyperbolicty of the moduli stack of canonically polarized manifolds \cite[Theorem 0.1]{VZ03}. In the case of smooth families of minimal varieties of general type, it was proved by Popa, Taji and the second author \cite{PTW18}. They also proved the algebraic degeneracy for families with geometric generic fibers admitting good minimal models, when the base spaces are surfaces. 
%Since we use stable reduction to replace the mild reduction in \cite{VZ03} and hence also \cite{PTW18}, our approach does not apply to the case when fibers of families are not of general type.

Based on \cite{VZ03} and \cite{PTW18}, the algebraic degeneracy for smooth families of varieties with semi-ample canonical bundles was recently proved by Deng \cite{Den18b}, by proving a result of a Torelli-type property for certain geometric variations of Hodge structures (VHS); see also Theorem \ref{th:Deng}. Using it, the author proved Brody hyperbolicity of moduli stacks of polarized monifolds with semi-ample canonical bundles. To and Yeung \cite{TY15} proved that the base spaces of effective parametrized (a more restrictive condition replacing maximal variation) families of canonically polarized manifolds are Kobayashi hyperbolic, from a differential geometric point of view; see also Schumacher \cite{Sch17} and Berndtsson, P\u aun and Wang \cite{BPW17}. Deng \cite{Den18} generalizes the Kobayashi hyperbolicity result to smooth families of minimal varieties of general type, where the author combined the Hodge theoretical method in \cite{VZ03} and its refinement in \cite{PTW18}, together with differential geometric methods. See also the remarks in \S \ref{scn:kobayashi}, about how the differential geometric methods could be applied in our situation.

Kebekus and Kov\'acs \cite{KK08} proposed a conjecture generalizing the Viehweg hyperbolicity conjecture to families of varieties with arbitrary variation. We propose a pair version here as the first conclusion in our main theorem serves as an evidence. See also \cite{W17b} for other evidences over $\bbC^*$ and abelian varieties. 
%In particular, the following conjecture has been proved, in the case $\dim V=1.$

\begin{conj}
Given a log smooth family $f:(U,D)\to V$ with $V$ quasi-projective, coefficients of $D$ in $(0,1)$, and $(U_y,D_y)$ of log general type for all $y\in V$, then either $\kappa(V)=-\infty$ and $\Var(f)<\dim V$, or $\Var(f)\leq \kappa(V)$.
\end{conj}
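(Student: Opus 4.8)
Since the statement is a conjecture, what follows is a strategy rather than a complete proof; the plan is to bootstrap from the maximal-variation case, which is exactly Theorem~\ref{thm:main}. In the situation of the conjecture, $f$ is a log smooth family with the coefficients of $D$ in $(0,1)$, so $(U,D)$ is klt; hence by \cite{BCHM}, applied to the generic fiber and spread out, $f$ admits a relative good minimal model over a Zariski-open subset of $V$, and both Theorem~\ref{thm:main} and Theorem~\ref{thm:refineHiggs} become available as soon as the family is arranged to have maximal variation. The first alternative of the dichotomy is then immediate: if $\Var(f)=\dim V$, a log birational modification turns $f$ into a maximal-variation log smooth family of log general type klt pairs, so Theorem~\ref{thm:main}(1) shows $V$ is of log general type and $\kappa(V)=\dim V\ge 0$; equivalently $\kappa(V)=-\infty$ forces $\Var(f)<\dim V$. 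It remains to prove $\Var(f)\le\kappa(V)$ whenever $\kappa(V)\ge 0$.

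\textbf{Reduction to maximal variation.} Write $v:=\Var(f)$. Following Viehweg's moduli-theoretic reductions (separating the variation) and their refinements in the good minimal model setting, I would replace $V$ by a birational model, pass to a finite cover étale over a Zariski-open subset, and produce a fibration $g\colon V\to W$ onto a smooth quasi-projective variety with $\dim W=v$ such that over a general fiber of $g$ the family $f$ is birationally isotrivial while, over $W$, it acquires maximal variation (possibly after further finite covers and modifications). Theorem~\ref{thm:refineHiggs} applied to the resulting family over $W$ then provides a nef and big line bundle sitting inside the Higgs bundle of a geometric VHS on a compactification $\overline{W}$; in particular one obtains a Viehweg--Zuo sheaf on $\overline{W}$, and Theorem~\ref{thm:main}(1) shows that $W$ is of log general type, so $\kappa(W)=v$.

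\textbf{Descent to $V$.} It remains to deduce $\kappa(V)\ge v$. I would transport the Viehweg--Zuo sheaf from $\overline{W}$ to a resolution $\overline{V}$ of the rational map $g$: since $g^*\Omega^1_{\overline{W}}(\log E_W)$ injects into $\Omega^1_{\overline{V}}(\log E)$, this yields a line subsheaf of $\mathrm{Sym}^N\Omega^1_{\overline{V}}(\log E)$ whose Iitaka dimension is at least $\dim W=v$, and Campana--Paun-type generic semipositivity of $\Omega^1_{\overline{V}}(\log E)$ then forces $\kappa(V)=\kappa\bigl(\overline{V},K_{\overline{V}}+E\bigr)\ge v$. (Alternatively, as $W$ is of log general type one may invoke logarithmic subadditivity of the Kodaira dimension over a base of general type, applied to $g$.) This gives $\kappa(V)\ge v=\Var(f)$, as required.

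\textbf{The main obstacle.} The crux is the reduction step. Theorem~\ref{thm:refineHiggs}, and more broadly the moduli-theoretic separation of the variation, is developed only in the maximal-variation case; extending it to arbitrary variation requires a sufficiently well-behaved moduli space---or a coarse substitute in the spirit of KSBA---for the fibers $(U_y,D_y)$, control of the log structure along the boundary under the finite base changes and birational modifications these reductions force (in particular restoring simple normal crossings and preserving the klt and log-smoothness hypotheses), and a check that the geometric VHS together with its Deligne extension behaves well under pullback along the variation fibration. Once this is in place, the dichotomy and the descent are formal consequences of Theorem~\ref{thm:main}, of \cite{BCHM}, and of established generic-positivity and subadditivity results.
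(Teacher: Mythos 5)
This is a \emph{Conjecture} in the paper; the authors explicitly offer Theorem~\ref{thm:main}(1) only as ``evidence'' and give no proof, so there is no paper argument to compare against. Your upfront framing as a strategy rather than a proof is therefore the right call, and your handling of the dichotomy is correct: if $\Var(f)=\dim V$ then Theorem~\ref{thm:main}(1) gives $\kappa(V)=\dim V$, so the case $\kappa(V)=-\infty$ forces $\Var(f)<\dim V$, and what remains is $\Var(f)\le\kappa(V)$ when $\kappa(V)\ge 0$.

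The descent step, however, is flawed as written, and not merely incomplete. After the (already delicate, and not established in this paper's setting) separation of variation into a fibration $g\colon V\to W$ with $\dim W=v$ and $W$ of log general type, the pulled-back Viehweg--Zuo sheaf on $\overline V$ has Iitaka dimension only $v$, which is strictly less than $\dim\overline V$ in the cases of interest; \cite[Theorem~7.6]{CP15} requires \emph{bigness} of the subsheaf and therefore does not apply. The alternative, log subadditivity over a base of log general type, gives $\kappa(V)\ge\kappa(F)+\dim W$ where $F$ is a general fiber of $g$, and this is vacuous unless $\kappa(F)\ge 0$, which is not guaranteed. Indeed, your argument as written would establish $\kappa(V)\ge v$ unconditionally, since nowhere do you actually use $\kappa(V)\ge 0$; but that conclusion is false. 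Take $V=\mathbb P^1\times C$ with $C$ a curve of genus $\ge 2$ and a log smooth family that is trivial along $\mathbb P^1$ and of maximal variation along $C$: then $v=1$ while $\kappa(V)=-\infty$. This is exactly why the conjecture is stated as a dichotomy rather than the flat inequality $\Var(f)\le\kappa(V)$, and any viable approach must account for the possibility $\kappa(V)=-\infty$ with $\Var(f)>0$. Beyond identifying the moduli-theoretic reduction as the main obstacle (which you do), the strategy needs a genuinely different mechanism at the descent stage, one that is sensitive to whether the $g$-fibers carry their own positivity.
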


%This motivates us to propose the following conjecture, which can be view as a natural extension of Kebekus and Kov\'acs' conjecture, which includes Theorem \ref{T:main} and \ref{T:isotrivial}, as two special cases.

%\begin{conj}
%Given a log smooth family $f:(U,D)\to V$ with $V$ quasi-projective, if $\Delta$ is a $\Q$-divisor supported on $D$ with coefficients in $[0,1]$ and $(U_y,\Delta_y)$ is of log general type for some $y\in V$, then either $\kappa(V)=-\infty$ and $\Var(f_\Delta)<\dim V$ or $\Var(f_\Delta)\leq \kappa(V)$ whenever the variation is defined, where $f_\Delta$ is the family $(U, \Delta)\to V$.
%\end{conj}

% combining Theorem \ref{T:main} and Theorem \ref{T:isotrivial}, we've already prove the previous conjecture in that case that $\dim V=1$ 

\subsection*{Structure} We first state two applications in Section \ref{sect:applications}. One is about the Brody hyperbolicity of moduli stack of stable log-smooth pairs; the other is about the simultaneous resolution of stable families of log pairs.  In Section \ref{sect:variation}, we recall the definition of the variation for families of log canonical pairs of log general type, and discuss how it changes under perturbation of coefficients when assuming MMP, which helps us reduce the proof of our main theorem to the case for families of klt pairs. In Section \ref{sect:stred}, we set up stable reductions for log smooth families, by using moduli stacks of stable log varieties, a replacement of mild reductions in \cite{VZ02, VZ03}. In Section \ref{sect:genfree},  we deduce generic freeness of direct images of pluri-canonical sheaves in the pair case from using Viehweg's fiber-product tricks and the stable reduction together with an analytic extension theorem. The generic freeness provides enough sections to construct Higgs sheaves. We generalize the Higgs sheaf construction of Popa and Schnell \cite{PS17} to the pair case for log smooth families in Section \ref{sect:higgs}. We conclude this paper, in Section \ref{sect:proofmain}, by showing the proof of Theorem \ref{thm:main}. %and Theorem \ref{thm:mgn}.

\subsection*{Notations and Conventions}
We mainly follow the notations and the terminology from \cite{PTW18} and \cite{KP16}, and work over the field of complex numbers $\bbC$. For a family of log pairs, we mean a projective surjective morphism $f\colon (X,D)\to Y$ with $Y$ being normal, $(X, D)$ being a log pair and the support of $D$ being either trivial or dominant $Y$, and general fibers of $f$ being connected. We say that the family of log pairs $f$ is a semi-stable family if it is flat and all fibers are semi log canonical (slc) pairs, and the relative log canonical divisor $K_{(X,D)/Y}$ is $\Q$-Cartier and stable under arbitrary base-changes, where $K_{(X,D)/Y}=K_{X/Y}+D$. It is called a stable family, if moreover, all the fibers are stable slc pairs. We say that $f$ is a log smooth family if $(X,D)$ is a log smooth pair with both $X$ and every stratum of the support of $D$ are smooth over the base (this implies the base being smooth) and with fibers projective. Abusing the notation, we denote $\omega^m_{X/Y}(mD):=\cO_X(mK_{(X,D)/Y)})$, as long as $mK_{(X,D)/Y}$ is Cartier. %We use the divisor notation $K_X$ and the sheaf $\omega_X$ interchangeably.

When $(X, D)$ is a smooth log pair, we use $\sD_X$ to denote the sheaf of differential operators with algebraic (or holomorphic) function coefficients and $\sD_X(-\log D)$ the subalgebra of $\sD_X$ generated by $\sO_X$ and $\shTA_X(-\log D)$, the sheaf of vector fields with logarithmic zeros along the support of $D$. We also write graded algebras
$$\sA^\bullet_X(-\log D)=\textup{Sym}^\bullet(\shTA_X(-\log D))\subseteq \sA^\bullet_X=\textup{Sym}^\bullet(\shTA_X).$$
With the order filtration of differential operators, we identify 
$$\sA^\bullet_X(-\log D)\simeq \gr^F_\bullet\sD_X(-\log D) \textup{ and }\sA^\bullet_X\simeq \gr^F_\bullet\sD_X.$$ 

\subsection*{Acknowledgement.} 
The authors are deeply grateful to Christopher Hacon, for many useful discussions and suggestions and for the ideas of the proof of Lemma \ref{L:compare var}. The authors also thank Junyan Cao, Jingjun Han, S\'andor Kov\'acs, Mihnea Popa, Karl Schwede, Christian Schnell and Ziwen Zhu, for useful discussions and for answering questions. They thank Kang Zuo for telling them Parshin's tricks.

\section{Some applications} \label{sect:applications}
\subsection{Brody hyperbolicity of moduli stack}
Applying Theorem \ref{thm:main} to families of canonically polarized log smooth pairs with boundary coefficients $>1/2$, one can prove that the log-smooth locus of $\cM_{n,v,I}$, the moduli stack of stable log varieties with fixed dimension
$n$, volume $v$, and coefficient set $I$, as defined in \cite[Definition 6.6]{KP16}, is of log general type and Brody hyperbolic. This generalizes \cite[Theorem 0.1]{VZ03} to the pair case. Note that the requirement of the boundary coefficients is to guarantee that the family is log smooth as long as each fiber is log smooth. See Example \ref{ex:easy} for a counterexample without this assumption. We denote the corresponding coarse moduli space by $M_{n,v,I}$.
\begin{theorem}\label{thm:mgn}
Let $Y$ be a quasi-projective variety and let  $f\in \cM_{n,v,I}(Y)$ be a stable family of log-smooth pairs with dimension
$n$, volume $v$, and coefficient set $I$. Assume that all elements in $I$ are $>1/2$. If the induced moduli map $Y\to M_{n,v,I}$ is quasi-finite over its image, then the base $Y$ satisfies
\begin{enumerate}
    \item every subvariety of $Y$ is of log general type;
    \item $Y$ is Brody hyperbolic, that is, there are no nonconstant holomorphic map $\gamma\colon\bbC\to Y$.  
\end{enumerate}
\end{theorem}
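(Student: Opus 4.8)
The plan is to deduce this from Theorem \ref{thm:main} by exhibiting the input family $f\colon (X,\mathcal D)\to Y$ as a log smooth family to which the hypotheses of Theorem \ref{thm:main} apply, and then propagating the conclusions from $Y$ to all its subvarieties. First I would record that, since all coefficients in $I$ exceed $1/2$, the stable family $f$ of log-smooth pairs is automatically a log smooth family in the sense of the Notations section: each fiber $(X_y,\mathcal D_y)$ is log smooth and the coefficient condition forces the total pair $(X,\mathcal D)$ to be log smooth over $Y$ (this is exactly the point flagged before the statement, with Example \ref{ex:easy} showing the hypothesis is needed). Each fiber is a stable, hence log canonical, pair; and being stable of log general type means $\omega_{X_y}(\mathcal D_y)$ is ample, so the general fiber is of log general type. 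Because the fibers of a stable family are already their own log canonical models, $f$ tautologically admits a relative good minimal model over all of $Y$. Finally, quasi-finiteness of the moduli map $Y\to M_{n,v,I}$ over its image says precisely that the isomorphism classes of $(X_y,\mathcal D_y)$ vary maximally, i.e.\ $\Var(f)=\dim Y$: this is the translation of "maximal variation" into moduli-map language, which I would justify using the functorial description of $M_{n,v,I}$ from \cite{KP16} together with the characterization of variation in Section \ref{sect:variation}.

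With these checks in place, Theorem \ref{thm:main} applied to $f_V$ (the restriction of $f$ over the smooth locus $V\subseteq Y$) yields that $V$ — and hence $Y$, since log general type is a birational notion insensitive to the singular locus — is of log general type, and that every holomorphic map $\bbC\to Y$ is algebraically degenerate. To upgrade "algebraically degenerate" to "Brody hyperbolic", and to get the statement for every subvariety, I would run a standard Noetherian induction on $\dim Y$. Given a nonconstant $\gamma\colon \bbC\to Y$, algebraic degeneracy forces its image to land in a proper subvariety $Y'\subsetneq Y$; restricting $f$ to (a resolution of) $Y'$ gives again a stable family of log-smooth pairs whose moduli map is quasi-finite onto its image — the restriction of a quasi-finite map is quasi-finite — so by induction $Y'$ is Brody hyperbolic, contradicting non-constancy of $\gamma$. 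The base case $\dim Y=0$ is trivial. The same induction, applied to the log general type conclusion, shows every subvariety of $Y$ is of log general type: any subvariety $Z\subseteq Y$ carries the restricted stable family with quasi-finite moduli map, so Theorem \ref{thm:main}(1) applies to (a resolution of) $Z$.

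The main obstacle I anticipate is the bookkeeping in the inductive step rather than any new geometric idea: one must make sure that passing to a resolution $\widetilde{Y'}\to Y'$ of a subvariety preserves all the hypotheses — log smoothness of the pulled-back family, the stable/log-general-type condition on fibers, and especially quasi-finiteness of the induced moduli map. Quasi-finiteness is the delicate one, since a priori a resolution could collapse fibers of the moduli map; but because $\widetilde{Y'}\to Y'$ is generically finite and the moduli map $Y'\to M_{n,v,I}$ is quasi-finite, the composition $\widetilde{Y'}\to M_{n,v,I}$ is generically finite, and after shrinking to a dense open subset (which suffices, as log general type and algebraic degeneracy are birational/open properties) it is quasi-finite. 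A secondary technical point is the identification of "moduli map quasi-finite" with "$\Var(f)=\dim Y$"; I would handle this by invoking the separatedness and the universal-family properties of the moduli functor of \cite{KP16}, so that two points of $Y$ with the same moduli image give isomorphic fibered pairs, which is exactly the fiber-class-equivalence relation defining $\Var(f)$.
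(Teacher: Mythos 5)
Your proposal is correct and rests on the same foundation as the paper's proof — namely, reducing everything to Theorem~\ref{thm:main} after checking that the coefficient hypothesis forces log smoothness of the total family and that quasi-finiteness of the moduli map is exactly maximal variation — but your argument for part~(2) is organized differently. The paper does not first establish algebraic degeneracy of $Y$ and then run a Noetherian induction; instead, given a nonconstant $\gamma\colon\bbC\to Y$, it directly sets $V$ to be the Zariski closure of $\gamma(\bbC)$, resolves $V$, lifts $\gamma$ to the resolution (using the lifting property of entire curves through birational morphisms), and applies Theorem~\ref{thm:main}(2) to the resolution: since $\gamma$ has Zariski-dense image in this smooth model by construction, the algebraic degeneracy conclusion is an immediate contradiction. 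This single application replaces your inductive descent, and it sidesteps the need to re-verify quasi-finiteness and log smoothness at every stage. Your route is also valid — passing repeatedly to the Zariski closure of the image amounts to the same descent — but it introduces bookkeeping (exactly the part you flag as delicate) that the paper avoids.

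One genuine imprecision in your write-up: to deduce ``every holomorphic map $\bbC\to Y$ is algebraically degenerate'' you apply Theorem~\ref{thm:main} to $f_V$ over the \emph{smooth locus} $V\subseteq Y$. A holomorphic map $\bbC\to Y$ with Zariski-dense image does not factor through $V$ when $Y$ is singular, so this does not directly yield the claim. The correct move, as the paper does, is to resolve (i.e.\ blow up singularities) and use the lifting property of entire curves through proper birational morphisms; the smooth locus is enough only for the birational statement in part~(1). Since you do invoke resolutions later in the inductive step, this looks like a slip rather than a conceptual gap, but it should be fixed: replace ``restriction over the smooth locus'' with ``pullback to a resolution of $Y$ (or of the relevant subvariety).''
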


\begin{proof}
Assume that we have a subvariety $V\subset Y$. Blowing-up the singularities of $V$, we have an induced stable family of log-smooth pairs $f_V\colon (U, D_U)\to V$. With the assumption on the coefficients of $D_U$, it is not hard to see that $f$ is log-smooth. Then the first statement follows from Theorem \ref{thm:main} (1). 

For the second one, if $\gamma\colon \bbC\rarr Y$ is a nonconstant holomorphic map, then we take $V$ to be the Zariski closure of the image of $\gamma$ and hence $\gamma$ has Zariski dense image in $V$. By the lifting property of $\gamma$ through birational morphisms, we are free to assume $V$ being smooth. This contradicts to Theorem \ref{thm:main} (2), because the induced family over $V$ is of maximal variation, as the original moduli map is quasi-finite.
\end{proof}
From the above theorem, in particular, the moduli stack of Riemann surfaces of genus $g$ with $n$ marked points $\cM_{g,n}$, is of log general type and Brody hyperbolic. 
In the case of moduli stack for Riemann surfaces $\cM_g$, the first conclusion above is an easy consequence of Viehweg's hyperbolicity conjecture; the second is a result implied by \cite{Ahl}, \cite{Roy} and \cite{Wol} from a differential geometric point of views using the negativity of the holomorphic sectional curvature of the Weil-Petersson metric on the Teichm\"uller spaces $T_g$. 
The hyperbolicity of $\cM_{g,n}$ can also be proved  by using hyperbolicity of $\cM_g$ and Parshin's covering tricks.

It is well-known that $M_{g,n}$ does not carry a universal family in general, for which the above theorem is not always applying. However, by \cite[Th\'eor\`em 16.6]{LMB}, as $\cM_{g,n}$ is a Deligne-Mumford stack, there exists a finite covering $S\to M_{g,n}$ induced by a universal family in $\cM_{g,n}(S)$ for every $n$ and $g$. Therefore, the above theorem specifically implies that whenever $3g-3+n>0$, $S$ is Brody hyperbolic and every subvariety of $S$ is of log general type. When considering the moduli spaces, based on the works of Eisenbud, Harris and Mumford, Logan \cite{Log} proved that $\overline M_{g,n}$ are of general type, for almost all $g$ and $n$.

The following example tells us that the log smooth assumption on the family in Theorem \ref{thm:main} can hardly be weakened in general.

\begin{example}\label{ex:easy}
Consider the following family defined by the natural projection onto the second factor $p_2:(\mathbb{P}^1\times \mathbb{P}^1,L)\to \mathbb{P}^1$, where the boundary divisor $L$ consists of $3$ different trivial sections with the coefficient $\frac{2}{3}$, and a diagonal section with the coefficient $\frac{1}{4}$. One checks that it is a stable family, with maximal variation, and all the geometric fibers are stable, klt, log smooth pairs. However, not all strata are relatively smooth, i.e. $p_2$ is not a log smooth family. Meanwhile, the base is $\mathbb P^1$, which is on the opposite of being hyperbolic.
\end{example}

\subsection{Simultaneous resolutions of log pairs}
Another application for Theorem \ref{thm:main} is about existence of the simultaneous resolutions of log pairs.% We first give a formal definition.

\begin{definition}
Given a semi-stable family of log canonical pairs $f:(U,D)\to V$ with $V$ smooth quasi-projective, we say that $f$ admits a \emph{simultaneous resolution}, if there is a log-resolution $\phi:(\tilde U, \tilde D)\to (U,D)$ over $V$, where $\tilde D$ is defined by the relation
$$K_{\tilde U}+\tilde D=\phi^*(K_{U}+D),$$
such that the induced family $g:(\tilde U, \tilde D)\to V$ is log-smooth.
\end{definition}
\begin{theorem}
Given a stable family of log canonical pairs $f:(U,D)\to V$, with $V$ smooth quasi-projective and $f$ of maximal variation, if $V$ is not of log general type or there is a holomorphic map $\gamma:\bbC\to V$ with a Zariski-dense image, then $f$ does not admit any simultaneous resolution.
\end{theorem}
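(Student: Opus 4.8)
The plan is to argue by contraposition: assume $f$ admits a simultaneous resolution $\phi\colon(\tilde U,\tilde D)\to(U,D)$ over $V$, so that the induced family $g\colon(\tilde U,\tilde D)\to V$ is log smooth, and deduce that $V$ is of log general type and that every holomorphic $\gamma\colon\bbC\to V$ is algebraically degenerate. The bulk of the work is to check that $g$ satisfies the hypotheses of Theorem~\ref{thm:main}, after which both conclusions are immediate. Concretely, I would verify: (i) the fibers $(\tilde U_y,\tilde D_y)$ are log canonical of log general type; (ii) $g$ admits a relative good minimal model over a Zariski open subset of $V$; and (iii) $g$ is of maximal variation.

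For (i), since $\phi$ is a log resolution with $K_{\tilde U}+\tilde D=\phi^*(K_U+D)$ by definition, it is crepant, so $(\tilde U,\tilde D)$ is log canonical because $(U,D)$ is, and fiberwise (using that $f$ is semi-stable, hence $K_{(U,D)/V}$ commutes with base change, and that $g$ is log smooth so $K_{(\tilde U,\tilde D)/V}$ does too) the restriction $\phi_y\colon(\tilde U_y,\tilde D_y)\to(U_y,D_y)$ is again crepant; thus $(\tilde U_y,\tilde D_y)$ is log canonical of log general type since $(U_y,D_y)$ is a stable slc pair, hence of log general type, and log volumes are preserved under crepant maps. For (ii): $(U,D)\to V$ is a stable family, so $K_{(U,D)/V}$ is relatively semiample; pulling back along $\phi$ and working over a Zariski open $V^\circ\subseteq V$ where things behave generically, the relative log canonical model of $g$ is just $f$ itself over $V^\circ$, which exhibits a relative good minimal model (indeed $\phi$ contracts precisely the crepant exceptional locus). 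For (iii): $\Var(g)$ is computed from the birational variation of the relative log canonical models of the geometric fibers, and these are the fibers of $f$; since $\Var(f)=\dim V$ by hypothesis, $\Var(g)=\dim V$ too — this uses the definition of variation for families of log pairs recalled in Section~\ref{sect:variation}, which is a birational invariant of the log canonical model.

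Having established (i)–(iii), Theorem~\ref{thm:main} applied to $g\colon(\tilde U,\tilde D)\to V$ gives that $V$ is of log general type and that every $\gamma\colon\bbC\to V$ has non–Zariski–dense image, contradicting the standing assumption that $V$ is not of log general type, respectively that some $\gamma$ has Zariski–dense image. Hence no simultaneous resolution exists.

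The main obstacle is step (ii): matching up "relative good minimal model over a Zariski open subset" for the log smooth family $g$ with the semiampleness of $K_{(U,D)/V}$ coming from stability of $f$, while keeping track of the crepant relation $K_{\tilde U}+\tilde D=\phi^*(K_U+D)$ over the locus where all the relevant sheaves are flat and compatible with base change. One must be careful that "log general type fibers" for $g$ really does hold (stable slc pairs are of log general type on each component, but one should confirm the resolved fibers $(\tilde U_y,\tilde D_y)$ inherit this through the crepant $\phi_y$, which is where semi-stability of $f$ and log smoothness of $g$ are both used to ensure the fiberwise crepant identity) — this is routine but is the one place where the hypotheses of Theorem~\ref{thm:main} could conceivably fail if one is sloppy about generic behavior, so it should be stated carefully rather than waved through.
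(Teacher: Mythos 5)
Your proposal is correct and takes exactly the paper's approach: the paper's own proof is the single sentence that $g$ satisfies the hypotheses of Theorem~\ref{thm:main}, and your items (i)--(iii) are precisely the verification the paper leaves implicit — $(\tilde U,\tilde D)$ is crepant to $(U,D)$, so the fibers are lc of log general type, $f$ itself is the relative log canonical (hence good minimal) model of $g$, and $\Var(g)=\Var(f)=\dim V$ since variation is defined via the relative log canonical model.
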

\begin{proof}
Assume that we have a simultaneous resolution $\phi:(\tilde U, \tilde D)\to (U,D)$ over $V$, then the induced $g:(\tilde U, \tilde D)\to V$ satisfies all assumptions in Theorem $A$.
\end{proof}

\section{Variation for families of log canonical pairs}\label{sect:variation}
The birational variation of families was first introduced by Viehweg \cite{Vie83} when studying Iitaka's $C_{n,m}$ conjecture; see also \cite{Kaw85} for an equivalent definition. It measures how general fibers of the family vary up to birational isomorphism classes. 
%Let us refer to \cite{Vie83} for the precise definition and related properties;  

In the pair case, Kov\'acs and Patakfalvi \cite[Definition 6.16 and 9.3]{KP16} defined variation for stable families and more generally for families with general fiber being log canonical and of log general type, when assuming the existence of log canonical models. In the case of stable families, it is the dimension of the image of the base in a certain moduli stack under the moduli map. In particularly, it measures how the isomorphic classes of fibers vary in family. In the case for families $f\colon (X,D) \to Y$ with general fibers being log canonical and of log general type, when assuming existence of the relative log canonical model over a Zariski open subset $Y_0\subseteq Y$, calling it $f_c$, the variation of $f$ is defined as 
\[\Var(f)=\inf\{\Var(f_c|_{U})\mid \textup{Zariski open } U\subseteq Y_0\}.\]
In particular, if the family $f$ has maximal variation (that is, $\Var(f)=\dim Y$), then $\Var(f)=\Var(f_c)$. This means that in the maximal variation case, the $\inf$ in the definition is redundant.  
%\begin{definition}[Kov\'acs-Patakfalvi]\label{def:variation}
%Let $f\colon (X,D)\to Y$ be a family of log pairs with generel fibers being log canonical and of log general type. %Assume that $f$ admits a log canonical model generically over $Y$, that is $f$ has a relative log canonical model over a %Zariski open subset of $Y$, calling it $f_c$. Then the variation of $f$ is defined to be the variation of $f_c$ in %\cite[Definition 6.16]{Ka81}, as $f_c$ is a stable family. It is called of maximal variation if the variation is the %dimension of $Y$ and birationally isotrivial if it is $0$.

When general fibers are Kawamata log terminal (klt) pairs, since the log canonical model exists in this case by \cite{BCHM}, the assumption is always fullfilled. In particular, the variation is defined for families with general fibers being klt pairs of log general type. 

It is worth mentioning that when considering families with general fibers being general type smooth projective varieties, the variation in the sense of Kov\'acs and Patakfalvi is equivalent to the original definition by Viehweg, if one applies \cite[Corollary 6.20]{KP16}.
%and \cite[Theorem 7.2]{Kaw85}.

When assuming the relative MMP, we obtain the following lemma regarding how the variation changes, after perturbing coefficients. This will play an important rule in proving Theorem \ref{thm:main}.  

\begin{lemma}\label{L:compare var}
Let $f:(X,D)\to Y$ be a family of log pairs with $X$ being $\Q$-Gorenstein and $(X,D)$ log canonical. Assume that for all $0< \epsilon \ll 1$,  fibers of f are of log general type over a Zariski open subset $Y_0\subseteq Y$, fibers of $f^\epsilon:(X,D^\epsilon)\coloneqq(X, D-\epsilon D)\to Y$ are klt pairs of log general type over $Y_0$ and f admits a relative good minimal model over $Y_0$, then we have
$$\Var(f)\le \Var(f^{\epsilon_0}),$$
for some $0<\epsilon_0\ll1$.
\end{lemma}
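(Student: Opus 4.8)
The plan is to show that the moduli map attached to the relative log canonical model $f_c$ of $f$ factors, over a dense open subset of $Y_0$, through the moduli map attached to the relative log canonical model $f^{\epsilon_0}_c$ of $f^{\epsilon_0}$, and then to compare the dimensions of the images. Throughout we are free to shrink $Y_0$, because $\Var(f)$ and $\Var(f^{\epsilon_0})$ are both defined as infima over Zariski open subsets of the base; and we may assume $X$ is $\Q$-factorial after a small $\Q$-factorialization, which affects neither side.

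\emph{Step 1 (a common model).} For $0<\epsilon_0\ll1$ the pair $(X,D^{\epsilon_0})$ is klt (shrink $Y_0$ so that the klt fibers come from a klt total pair), so its relative log canonical model $f^{\epsilon_0}_c\colon(W,\Delta)\to Y_0$ exists by \cite{BCHM}. Every pair on the segment of boundaries $\{(1-t)D\mid 0\le t\le\epsilon_0\}$ admits a relative good minimal model over $Y_0$ --- at $t=0$ by hypothesis, and for $t>0$ by \cite{BCHM} as these are klt with relatively big log canonical class --- so, by finiteness of relative log canonical models along the segment, we may, after further shrinking $\epsilon_0$, choose a single birational contraction $\phi\colon X\dashrightarrow W$ over $Y_0$, with $D_W\coloneqq\phi_*D$ and $\Delta=(1-\epsilon_0)D_W$, such that simultaneously $(W,(1-\epsilon_0)D_W)\to Y_0$ is the relative log canonical model of $f^{\epsilon_0}$ and $(W,D_W)\to Y_0$ is a relative good minimal model of $f$ (using the given relative good minimal model of $f$ and relative abundance, valid here since the fibers are of log general type). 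In particular $K_{(W,D_W)/Y_0}$ is semiample over $Y_0$, and $\textup{Proj}_{Y_0}$ of its relative log canonical ring is the relative log canonical model $f_c\colon(X_c,D_c)\to Y_0$ of $f$.

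\emph{Step 2 (factoring the moduli maps).} Shrinking $Y_0$ once more we may assume that $f$, $f^{\epsilon_0}$, $f_c$, $f^{\epsilon_0}_c$ and $(W,D_W)\to Y_0$ all commute with restriction to fibers. Then for every $y\in Y_0$ the fiber $(W_y,D_{W,y})$ is a good minimal model of $(X_y,D_y)$, its log canonical model is $(X_{c,y},D_{c,y})$, and the fiber of $f^{\epsilon_0}_c$ over $y$ is $(W_y,(1-\epsilon_0)D_{W,y})$. The recipe ``rescale the boundary by $1/(1-\epsilon_0)$, then pass to the log canonical model'' sends $(W_y,(1-\epsilon_0)D_{W,y})$ to $(X_{c,y},D_{c,y})$ and is manifestly well defined on isomorphism classes; hence $(W_y,(1-\epsilon_0)D_{W,y})\cong(W_{y'},(1-\epsilon_0)D_{W,y'})$ forces $(X_{c,y},D_{c,y})\cong(X_{c,y'},D_{c,y'})$ for $y,y'\in Y_0$. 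In terms of the moduli maps to the moduli stacks of log canonical pairs of log general type of \cite{KP16}, this is exactly the assertion that the moduli map of $f_c$ factors through that of $f^{\epsilon_0}_c$ over $Y_0$.

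\emph{Step 3 (conclusion) and the main obstacle.} Such a factorization forces the image of the moduli map of $f_c$ to have dimension at most that of $f^{\epsilon_0}_c$, and the same holds after restricting both families to any Zariski open $U\subseteq Y_0$; that is, $\Var(f_c|_U)\le\Var(f^{\epsilon_0}_c|_U)$ for all such $U$. Taking the infimum over $U$ and unwinding the definition of $\Var$ gives $\Var(f)\le\Var(f^{\epsilon_0})$ for this $\epsilon_0$. The crux of the argument is Step 1 --- producing one model $W$ that serves as a log canonical model for the perturbed family $f^{\epsilon_0}$ and, at the same time, as a good minimal model for the original family $f$. Concretely, one must guarantee that the log canonical contraction $\phi$ built from the boundary $(1-\epsilon_0)D$ is $(K_X+D)$-trivial on the divisors it contracts, so that $\phi$ also realizes a good minimal model of $(X,D)$ and preserves the relative log canonical ring of $(X,D)$; this is precisely where smallness of $\epsilon_0$, finiteness of relative log canonical models over $Y_0$, and the minimal model program for log canonical pairs of log general type must be invoked.
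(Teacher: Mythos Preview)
Your overall strategy matches the paper's: once one has a model $W$ on which both $(1-\epsilon_0)D_W$ gives the log canonical model of $f^{\epsilon_0}$ and $D_W$ computes the relative log canonical ring of $f$, the recipe ``rescale the boundary by $1/(1-\epsilon_0)$ and pass to the log canonical model'' recovers $f_c$ from $f^{\epsilon_0}_c$, and the variation inequality follows. Steps 2 and 3 are essentially what the paper does.

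The gap is in Step 1, and you in fact flag it yourself without closing it. You invoke ``finiteness of relative log canonical models along the segment $\{(1-t)D\mid 0\le t\le\epsilon_0\}$'' to produce a single $W$ serving both purposes, but \cite[Theorem~E]{BCHM} only gives finiteness for klt boundaries, i.e.\ for $t\in(0,\epsilon_0]$; the endpoint $t=0$ is merely log canonical and is not covered. Even granting that all $(X,(1-t)D)$ with $t\in(0,\epsilon_0]$ share the same log canonical model $W$, you still need that $D_W$ is $\Q$-Cartier on the (a priori non-$\Q$-factorial) $W$, that $\phi$ is $(K_X+D)$-non-positive, and that $K_W+D_W$ is semiample over $Y_0$---none of which is automatic from a limit argument.

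The paper avoids this by going in the opposite direction. It starts from the hypothesized good minimal model $\phi:X\dashrightarrow X_m$ of $(X,D)$, observes that for $\epsilon\ll1$ the same $\phi$ is part of a $(K_X+D^\epsilon)$-MMP, and then takes the log canonical model $X_c^\epsilon$ of $(X_m,D_m^\epsilon)$ \emph{relative to the morphism $\psi:X_m\to X_c$}, not over $Y$. Finiteness of models is applied only over $X_c$ and only to the klt pairs $(X_m,D_m^\epsilon)$, which is legitimate. The substantive step you are missing is then to show that $K_{X_c^\epsilon}+D_c^\epsilon$ is ample over $Y$ (not just over $X_c$); the paper does this by a cone-theoretic argument using Kawamata's bound $-K\cdot R\le 2n$ on extremal rays to rule out any $(K_{X_c^\epsilon}+D_c^\epsilon)$-non-positive curve class not contracted by $\alpha:X_c^\epsilon\to X_c$. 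That argument is precisely the content you gesture at in your last paragraph but do not supply.
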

\begin{proof}
For simplicity of notations, we replace $Y$ by $Y_0$.
Consider the relative minimal model of $f$, $f_m:(X_m,D_m)\to Y$, with the induced morphism $\phi:X\dashrightarrow X_m$. We can find $0<\epsilon\ll 1$, such that $\phi$ is also a part of running the minimal model program of $(X, D^\epsilon)$, and we denote $D^\epsilon_m:=\phi_*  D^\epsilon$. Let $(X_c, D_c)\to Y$ be the relative log canonical model of $f_m:(X_m,D_m)\to Y$, and let $\psi: X_m\to X_c$ be  the induced morphism. Now we consider the relative log canonical model of $(X_m, D^\epsilon_m)$ with respect to $\psi$, and denote it by $\alpha:(X_c^\epsilon, D^\epsilon_c)\to X_c$. We claim that
\begin{claim}
There exist $0<\epsilon_0\ll1$, so that for every  $\epsilon\ge \epsilon_0$ sufficiently close to $\epsilon_0$, $D^\epsilon_c$ is $\Q$-Cartier and the induced morphism $f^{\epsilon}_c:(X_c^\epsilon, D^\epsilon_c)\to Y$ in the following diagram is actually the relative log canonical model of $f^{\epsilon}$
\end{claim}

\begin{center}
\begin{tikzcd}
(X,D^\epsilon)\arrow{d}{id}\arrow[dashed]{r}{\phi}\arrow[bend right=37]{dd}[swap]{f^\epsilon}
& (X_m, D^\epsilon_m)\arrow{d}{id}\arrow[dashed]{r}{\psi^\epsilon}
& (X_c^\epsilon,D_c^\epsilon)\arrow{d}{\alpha}\\
(X,D)\arrow{d}{f}\arrow[dashed]{r}{\phi}
& (X_m, D_m)\arrow{ld}[swap]{f_m}\arrow{r}{\psi}
& (X_c,D_c)\arrow{lld}{f_c}\\
Y. & & 
\end{tikzcd}
\end{center}

Assuming the claim, we prove the inequality $\Var(f)\le \Var(f^{\epsilon_0})$. By definition, it suffices to show that, if two fibers over general $y_1,y_2\in Y$ of $f_c^{\epsilon_0}$ are isomorphic, then, those two fibers over $y_1$ and $y_2$ for $f_c$ are also isomorphic. Hence, it is enough to show that the family $f_c$ can be recovered by the family $f_c^{\epsilon_0}$. 
%By the claim, we first find a small $\epsilon_0$ such that the minimal model of $f^{\epsilon}$ does not change when $\epsilon$ is sufficiently close to $\epsilon_0$, by finiteness of models thanks to \cite{BCHM}. Perturbing $\epsilon$ if necessary, we can assume that  the log canonical models of $f^\epsilon$ around $\epsilon_0$ stay the same. In particular,  $D_c^{\epsilon_0}$ is $\mathbb{Q}$-Cartier 
To prove this, we write $D'_c=\frac{1}{1-\epsilon_0}D_c^{\epsilon_0}$.
%Note that, since $\epsilon_0$ is small enough, we can retrieve $D'_c$, by just using the $\mathbb{Q}$-divisor $D_c^{\epsilon_0}$. 
We have the induced birational map over $Y$, $(X_m, D_m) \dashrightarrow (X^{\epsilon_0}_c, D'_c)$, which factors through $\psi$. Due to the definition of $\psi^{\epsilon_0}$, it is a morphism away from a codimension at least 2 subset, and it only contracts the curve classes that are contracted by $\psi$. Hence, it preserves the relative log canonical ring over $Y$. This means that $f_c$ is the relative log canonical model of $(X_c^{\epsilon_0},D'_c)\to Y$ and we recover $f_c$ from $f_c^{\epsilon_0}$

Finally, we prove the claim. By the klt assumption on $(X, D^\epsilon)$, $(X_m, D_m^\epsilon)$ is also klt for all $0<\epsilon\ll1$. Hence, there exists $0<\epsilon_0\ll1$ so that for all $\epsilon$ sufficiently close to $\epsilon_0$, $(X_m, D_m^\epsilon)$ share the same log canonical model over $X_c$, thanks to finiteness of models \cite[Theorem E]{BCHM}. In particular, $D^{\epsilon}_c$ is $\Q$-Cartier. We then prove that for every $\epsilon\ge \epsilon_0$ sufficiently close to $\epsilon_0$, $K_{Y_c^\epsilon}+D_c^\epsilon$ is ample over $Y$. Pick a curve class $\xi$ contracted by $f_c^\epsilon$.
 %We only need to show that, for any curve class $\xi$ on $X^\epsilon_c$, that is contracted by $f^\epsilon_c: X^\epsilon_c\to Y$, we have $\xi\cdot (K_{X^\epsilon_c}+D_c^\epsilon)> 0$. By the finiteness of the relative log canonical model, with respect to $\psi$, those $\epsilon$ such that $D^\epsilon_c$ is $\mathbb{Q}$-Cartier form a dense open set.
 If $\xi$ is contracted by $\alpha$, then obviously $\xi\cdot (K_{Y_c^\epsilon}+D_c^\epsilon)>0$. Otherwise, we may assume that $\xi$ is not contracted by $\alpha$ and $\xi\cdot (K_{X^\epsilon_c}+D_c^\epsilon)\leq 0$. Note that, as discussed above, $f_c$ is also the relative log canonical model of $(X^\epsilon_c, \frac{1}{1-\epsilon}D^\epsilon_c)$ over $Y$. Hence, we have $\xi\cdot (K_{X^\epsilon_c}+\frac{1}{1-\epsilon}D^\epsilon_c))> 0$. We then conclude that $\xi \cdot D^\epsilon_c>0$ and $\xi\cdot K_{X^\epsilon_c}<0$. They imply that, for $0<\delta\ll1$ we have $\xi\cdot (K_{X^\epsilon_c}+D^{\epsilon+\delta}_c)< 0$. By the cone theorem, we can find a negative extremal ray $\eta$ of $K_{X^\epsilon_c}+D^{\epsilon+\delta}_c$, that is contracted by $f^\epsilon_c$, but not by $\alpha$. Repeating the argument for proving $\xi\cdot K_{X^\epsilon_c}<0$, we have $\eta\cdot (K_{X^\epsilon_c}+\frac{1}{1-\epsilon-\delta}D^{\epsilon+\delta}_c)>0$ and $\eta\cdot K_{X^\epsilon_c}<0.$
%We have , since otherwise, $\eta$ will be contracted by $\alpha$.
Hence, on the one hand, for a fixed positive integer $k$, such that $k(K_{X^\epsilon_c}+\frac{1}{1-\epsilon-\delta}D^{\epsilon+\delta}_c)$ is Cartier, we have
$$(k+1)\eta\cdot (K_{X^\epsilon_c}+\frac{1}{1-\epsilon-\delta}D^{\epsilon+\delta}_c)>1;$$
 on the other hand, by \cite[Theorem 1]{Ka91}, we have
$$\frac{1}{2n}\eta \cdot K_{X^\epsilon_c}\geq -1,$$
where $n=\dim X$.
Adding the two inequalities, we have
\begin{equation} \label{eq:eta111}
 \eta \cdot (K_{X^\epsilon_c}+\frac{k+1}{(k+1+1/2n)(1-\epsilon-\delta)}D^{\epsilon+\delta}_c)>0.   
\end{equation}
Since $\epsilon_0\ll1$, we know $\frac{k+1}{(k+1+1/2n)(1-\epsilon-\delta)}<1$. Therefore, the inequality \eqref{eq:eta111} contradicts to the assumption that $\eta$ is a negative extremal ray.
\end{proof}
As illustrated in the following example, strict inequality in the above lemma might happen. 
\begin{example}\label{ex:main}
We start with a trivial family $p_2: (\mathbb{P}^2\times \mathbb{P}^1, \tilde A)\to \mathbb{P}^1$, where $\tilde A=p_1^* A$, for $A$ being four lines on $\mathbb{P}^2$ at generic location, and $p_1:\mathbb{P}^2\times \mathbb{P}^1\to \mathbb{P}^2$ the natural projection. Pick one line $L$ in $A$ and consider the diagonal section $S$, $\mathbb{P}^1\to p_1^*L$. Now we blow up the section $S$ on $\mathbb{P}^2\times \mathbb{P}^1$, we get $B:= \text{Bl}_S (\mathbb{P}^2\times \mathbb{P}^1)$, with the exceptional divisor $E$. We have an induced family $f:(B,\tilde A'+E)\to \mathbb{P}^1$, with $\tilde A'$ being the strict transform of $\tilde A$. Now, each fiber of $f$ is just blowing up one point on $L\subset \mathbb{P}^2$. 
%It is not hard to see that the general fibers are those with the blowing up point on $L$ but not on $A-L$. However, $f$ is not a log smooth family on $\mathbb{P}^1$, by considering the strata of the intersection of $E$ and $A-L$. Hence, $f$ is log smooth over $\mathbb{P}^1$ without three points. Note that we are considering all divisors with reduced structure, i.e. coefficients are $1$. 
It is obvious that the relative log canonical model of $f$ is just our starting trivial family $p_2$. Hence $\Var(f)=0$. However, if we perturb the coefficient of the exceptional divisor, considering $f^\epsilon: (B, \tilde A'+(1-\epsilon)E)\to \mathbb{P}^1$, with $0<\epsilon< 1$, one checks easily that $f^\epsilon$ itself is a stable family over $\mathbb P^1$ away from three points. However, general fibers of $f^\epsilon$ are not isomorphic by counting parameters, which implies that $\Var(f^\epsilon)=1$. 

%Second, if we consider the family $f$, using our birationally stable variation, which will be defined in Section 2, instead of using the definition in \cite{KP16}, we get a strictly stronger result in this case, since $\Var^s(f)=1$, while $\Var(f)=0$
\end{example}

\section{Stable reduction}\label{sect:stred}

\subsection*{Singularities of families of SLC pairs}
We discuss singularities of families of slc pairs. 

\begin{prop}\label{prop:singtotal}
Let $f:(X,D)\to Y$ be a semi-stable family, with general fibers being log canonical (resp. klt) pairs and $Y$ being smooth. Then $(X,D)$ is also a log canonical (resp. klt) pair. 
\end{prop}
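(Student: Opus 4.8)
The plan is to reduce the statement to an inversion-of-adjunction type argument. The key point is that, because $f\colon (X,D)\to Y$ is semi-stable, every fiber $(X_y, D_y)$ is slc, and in particular for general $y$ it is lc (resp. klt); moreover $K_{(X,D)/Y}$ is $\Q$-Cartier and compatible with base change, so $K_{(X,D)/Y}|_{X_y} = K_{(X_y,D_y)}$. Since $Y$ is smooth, writing $f^*(K_Y)$ makes sense and $K_{(X,D)} = K_{(X,D)/Y} + f^*K_Y$ is $\Q$-Cartier; thus it suffices to check discrepancies of $(X,D)$ along exceptional divisors over $X$.

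First I would establish the claim at the generic point of $Y$ and then spread out: the generic fiber $(X_\eta, D_\eta)$ over the function field $K(Y)$ is lc (resp. klt) by assumption, hence so is the geometric generic fiber, and by openness of (log) canonicity / klt in families (semicontinuity of the minimal log discrepancy, using that $(X,D)$ is $\Q$-Gorenstein and $D$ has no vertical components other than possibly none — here $\supp D$ is dominant over $Y$, so $D$ is "horizontal") one gets that $(X,D)$ is lc (resp. klt) over a dense open $Y^\circ \subseteq Y$. The remaining task is to handle the fibers over $Y \setminus Y^\circ$; equivalently, to rule out an exceptional divisor $F$ over $X$ with center dominating a codimension-one point of $Y$ and with discrepancy $a(F; X, D) < -1$ (resp. $\le -1$).

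The main step, and the one I expect to be the principal obstacle, is the following local computation: take a codimension-one point $P \in Y$; since $Y$ is smooth, $t := $ (a local defining equation of $\overline{\{P\}}$) is part of a regular system of parameters, and $f^{-1}(P) = (X_P, D_P)$ is a Cartier divisor in $X$ with $\mathcal{O}_X(-X_P) = f^*\mathcal{O}_Y(-\overline{\{P\}})$ locally. I would apply adjunction/inversion of adjunction (e.g. Kawakita's theorem, or Kollár's treatment in \emph{Singularities of the Minimal Model Program}) to the divisor $X_P$: since $(X_P, D_P)$ is slc — in particular the normalization is lc, and $X_P$ is reduced and $S_2$ — inversion of adjunction gives that $(X, D + X_P)$ is lc near $X_P$. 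Because $X_P = f^*(\text{local coordinate})$ is reduced and the pair $(X,D+X_P)$ is lc, subtracting off the reduced divisor $X_P$ shows $(X,D)$ is lc near $X_P$ (one uses that if $(X, \Delta + S)$ is lc with $S$ reduced Cartier then $(X,\Delta)$ is lc, since discrepancies only increase when removing a component). For the klt case one argues identically with "klt" in place of "lc" on the open part $Y^\circ$, noting that the non-klt locus of $(X,D)$, being closed, would have to meet some fiber, contradicting that all fibers $(X_y,D_y)$ are klt and the fact (again inversion of adjunction) that $(X,D)$ is klt near $X_y$ whenever $(X_y,D_y)$ is. Combining the open dense statement with the codimension-one analysis and then noting that lc (resp. klt) can be checked in codimension one after knowing it holds on a big open set — or more cleanly, running the inversion-of-adjunction argument at every point of $Y$, since $Y$ is covered by such coordinate slices — gives that $(X,D)$ is lc (resp. klt) everywhere. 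The delicate points are: (i) verifying the hypotheses of inversion of adjunction in the slc (non-normal) setting, which requires knowing $X_P$ is reduced and the pair behaves well on the normalization, both of which follow from semi-stability and flatness; and (ii) making sure the base-change compatibility of $K_{(X,D)/Y}$ is genuinely used so that restricting to fibers computes the right log canonical divisor.
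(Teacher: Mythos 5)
Your overall strategy --- reduce to inversion of adjunction across the Cartier divisor $X_P = f^{-1}(P)$ for divisorial points $P$ of the smooth base --- is in the same spirit as the paper, which also relies on adjunction to the fiber and an induction on $\dim Y$ (using the Koll\'ar--Shokurov connectedness theorem in the curve base case as a substitute for the full Kawakita-type inversion of adjunction you invoke). However, there is a genuine gap in your klt case, and a second one in your reduction to codimension one.

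\emph{The klt gap.} You assert that ``the non-klt locus of $(X,D)$\ldots would have to meet some fiber, contradicting that all fibers $(X_y,D_y)$ are klt.'' But the hypothesis only gives that \emph{general} fibers are klt; the fibers over $Y\setminus Y^\circ$ --- which is exactly where any potential non-klt locus lives --- are merely slc, so there is no contradiction of this kind. Moreover, inversion of adjunction from an slc fiber only returns that $(X, D+X_P)$ is lc, not plt, so your ``argues identically'' does not go through. What actually rescues the klt case is the Cartier-ness of $X_P$: if $E$ is an exceptional divisor over $X$ whose center lies in $X_P$, then $\operatorname{mult}_E(\pi^*X_P)\ge 1$ on any model where $E$ appears, hence
\[
a(E;X,D)\;\ge\; a(E;X,D+X_P)+1\;\ge\; -1+1\;=\;0\;>\;-1
\]
once $(X,D+X_P)$ is lc near $X_P$. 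This strict gain of $1$ is what turns ``$(X_P,D_P)$ slc'' into ``$(X,D)$ klt,'' not the klt-ness of the special fiber (which fails). The paper records exactly this in its curve-case computation: $a_i'\ge 1$ combined with $a_i+a_i'\le 1$ gives $a_i\le 0<1$.

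\emph{The codimension-one issue.} Being lc (resp.\ klt) is not a condition you can verify after removing a codimension-two locus from $Y$: a valuation of $X$ with discrepancy $\le -1$ (resp.\ $<-1$) may well have center lying over a codimension $\ge 2$ point of $Y$, and your codimension-one slicing argument never touches it. Your alternative of ``running the inversion-of-adjunction argument at every point of $Y$'' is the right idea, but to invoke inversion of adjunction across a hypersurface slice $Y_0\subset Y$ through a higher-codimension point you must first know that $(X_{Y_0},D_{Y_0})$ is lc (resp.\ klt) --- and that is precisely the proposition for the lower-dimensional base $Y_0$. So the argument has to be organized as an honest induction on $\dim Y$, as the paper does: the curve case uses the slc-ness of the central fiber plus the Cartier gain above (via the connectedness lemma), and the inductive step slices $Y$ by a general smooth hypersurface through the given point, applies the induction hypothesis to get $(X_{Y_0},D_{Y_0})$ lc/klt, and only then invokes inversion of adjunction.
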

\begin{proof}
Observe that $X$ is normal (\cite[Theorem 23.9]{Mat89}), and $K_X+D$ is $\mathbb{Q}$-Cartier by assumption. We first prove the klt case by induction on the dimension of $Y$. We assume that $Y$ is a smooth curve around the origin $0$, with $(X_0, D_0)$ the only slc fiber. By inversion of adjunction, we see that $(X,D)$ is klt away from $X_0$. Take a log resolution of $(X,D+X_0)$, $\pi\colon X'\to X$. Denote $D'$ and $X'_0$, the strict transform of $D$ and $X_0$ on $X'$ respectively.
%and $f':X'\to Y$, the naturally induced morphism. 
We write
\begin{equation}\label{E:pullback K_X+D}
    K_{X'}+D'+\sum_i a_iE_i=\pi^*(K_X+D),
\end{equation}
where $E_i$'s are exceptional,
%that map to $0$ and $F_i$'s are the exceptional divisors that dominate $Y$.
and $\pi^*X_0=X'_0+\sum a'_iE_i$. Since $(X,D)$ is klt away from $X_0$, if $E_i$ is not a divisor over centers contained in $X'_0$, then  $a_i<1$.

%By adjunction, we have 
%$$K_{X'_0}= (K_X'+X'_0)|_{X'_0}= (K_X'-\sum a'_iE_i)|_{X'_0}.$$
Applying \eqref{E:pullback K_X+D}, by adjunction we have 
\[
    K_{X'_0}+D'|_{X'_0}=\pi^*(K_{X_0}+D_0)-(\sum_i(a_i+a'_i)E_i)|_{X'_0}.
\]
Due to the assumption that $(X_0, D_0)$ is slc, we have $(a_i+a'_i)\leq 1$, for those $E_i$ over $X'_0$. Thanks to the connectedness lemma \cite[Theorem 5.48]{KM98},  we further see that $a_i<1$ for all $E_i$. We then prove the curve case.

If dim $Y>1$, fix a point $y\in Y$ and choose a general smooth divisor $Y_0$ passing $y$ so that $f|_{(X_0, D_0)}$ is still a semi-stable family with general fibers klt pairs, where we write $X_0=f^*Y_0$ and $D_0=D|_{X_0}$. Hence we have that $(X_0, D_0)$ is klt, by the induction hypothesis. By inversion of adjunction, we have $(X,D)$ is klt. 

The log canonical case can be proved similarly. 
\end{proof}

%%%%%
%%%%%%
%%%%%%%
%%%%%%%%

\subsection*{Fiber products of family of pairs}
Let us first introduce some notations of fiber products. For a morphism $f\colon X\rarr Y$ of schemes, define by
\[X^r_Y:=X\times_YX\times_Y\cdots\times_YX\]
the $r$-th fiber product and $f^r_Y\colon X^r_Y\rarr Y$ the induced morphism. If $\Gamma$ is a $\Q$-Cartier divisor on $X$, we write
\[\Gamma^r_Y=\sum_{i=1}^rp_i^*\Gamma\]
where $p_i\colon X^r_Y\rarr X$ the $i$-th projection; $X^r_Y$ and $\Gamma^r_Y$ are also denoted by $X^r$ and $\Gamma^r$ respectively if $Y$ is obvious from the context. 

\begin{lemma}\label{lm:prodsing}
If $(Z_1, D_1)$ and $(Z_2, D_2)$ are (stable) slc pairs, then $(Z_1\times Z_2, p_1^*D_1+p_2^*D_2)$ is also a (stable) slc pair. Moreover, if both of the pairs are log canonical (resp. klt), then so is $(Z_1\times Z_2, p_1^*D_1+p_2^*D_2)$.
\end{lemma}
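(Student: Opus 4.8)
The statement is local on each factor, so after passing to generic points of the relevant strata it reduces to a computation of discrepancies along the product. The plan is to pass to normalizations (if not already normal), reduce to the case where $Z_1$ and $Z_2$ are normal varieties with $D_i$ $\Q$-Cartier, and then compute log discrepancies of $(Z_1\times Z_2, p_1^*D_1+p_2^*D_2)$ over a product of log resolutions. The key identity is that $K_{Z_1\times Z_2}=p_1^*K_{Z_1}+p_2^*K_{Z_2}$, so that for log resolutions $\mu_i:(\tilde Z_i,\tilde D_i+\operatorname{Exc})\to (Z_i,D_i)$ the product $\mu_1\times\mu_2$ is a log resolution of the product pair (after possibly blowing up further to make the total transform snc, which only increases discrepancies), and every prime divisor appearing in it is dominated by either a divisor on $\tilde Z_1$ times $Z_2$, or $Z_1$ times a divisor on $\tilde Z_2$, or a component of $\operatorname{Exc}(\mu_1)\times\operatorname{Exc}(\mu_2)$; on the first two types the discrepancy equals the corresponding discrepancy on a single factor, and on the mixed type it is a sum $a_i+a_j$ which, if each $a_i>-1$ (resp. $\ge-1$), remains $>-1$ (resp. need not — so one must be slightly careful) — the right statement is that log canonicity is the condition $a\ge -1$, i.e. log discrepancy $\ge 0$, and log discrepancies add only after subtracting $1$, so the mixed divisor over a codimension-$(c_1+c_2)$ center has log discrepancy $\ge 0$ precisely because each factor contributes a nonnegative summand; this is exactly Lemma \ref{lm:prodsing}'s content in the normal case.

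\textbf{Handling the non-normal case.} For the slc part I would use that semi-log-canonical singularities are detected on the normalization together with the conductor: if $\nu_i:(Z_i^\nu, D_i^\nu+\Delta_i)\to (Z_i, D_i)$ is the normalization with conductor $\Delta_i$, then $(Z_i^\nu,D_i^\nu+\Delta_i)$ is log canonical, and the normalization of $Z_1\times Z_2$ is $Z_1^\nu\times Z_2^\nu$ with conductor $p_1^*\Delta_1+p_2^*\Delta_2$ (since the conductor of a product is the ``union'' of pulled-back conductors, as the non-normal locus of the product is $(\text{non-normal locus of }Z_1)\times Z_2 \cup Z_1\times(\text{non-normal locus of }Z_2)$, compatibly with the gluing involutions). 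Then one applies the normal case to $(Z_1^\nu\times Z_2^\nu, p_1^*(D_1^\nu+\Delta_1)+p_2^*(D_2^\nu+\Delta_2))$ to get log canonicity, and one checks the gluing data (the involutions) is compatible with taking products, so that $(Z_1\times Z_2, p_1^*D_1+p_2^*D_2)$ is slc. For the ``stable'' addendum one must further check ampleness: $K_{Z_1\times Z_2}+p_1^*D_1+p_2^*D_2 = p_1^*(K_{Z_1}+D_1)+p_2^*(K_{Z_2}+D_2)$, a sum of pullbacks of ample $\Q$-Cartier divisors under the two projections, hence ample on the product; $\Q$-Cartierness of the sum is clear, and projectivity/properness is preserved under products.

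\textbf{Main obstacle.} The routine discrepancy bookkeeping in the normal case is straightforward; the genuine subtlety is the bookkeeping for normalization and gluing in the slc case — verifying that the normalization of the product is the product of the normalizations \emph{with the expected conductor divisor}, and that the gluing involution on $Z_1\times Z_2$ is the product of the two involutions, so that the slc gluing conditions for the factors imply that for the product. This is where I would be most careful, appealing to the descent description of slc pairs via $(Z^\nu, D^\nu+\Delta, \sigma)$; once that identification is in place, everything else is formal. (Alternatively, since in the applications both $Z_i$ are actually log smooth, hence normal, one may prove the normal statement in full and simply remark that the slc case follows by the same discrepancy computation on the normalization, which is the route I expect the authors to take.)
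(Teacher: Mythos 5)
The paper itself offers no proof here: it simply cites the non-pair version (van Opstall's result that products of slc/stable varieties are slc/stable) and declares that ``the proof works for the pair case as well,'' leaving the details to the reader. Your sketch correctly identifies and fills in exactly those details, and it follows the same route that citation points to — normalization plus conductor for slc, discrepancy bookkeeping over a product of log resolutions for lc/klt, and the sum-of-pullbacks computation $K_{Z_1\times Z_2}+p_1^*D_1+p_2^*D_2=p_1^*(K_{Z_1}+D_1)+p_2^*(K_{Z_2}+D_2)$ for ampleness in the stable case. So the proposal is correct and compatible with what the paper does.

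One point worth tightening: the ``mixed type'' worry you raise is in fact a non-issue, because the product $\mu_1\times\mu_2\colon\tilde Z_1\times\tilde Z_2\to Z_1\times Z_2$ of two log resolutions is already a log resolution of the product pair. Simple normal crossing is a local condition preserved under products: if $\tilde D_i+\Exc(\mu_i)$ is snc on $\tilde Z_i$ with local coordinates $(x_\bullet)$ and $(y_\bullet)$, then $p_1^*(\tilde D_1+\Exc(\mu_1))+p_2^*(\tilde D_2+\Exc(\mu_2))$ is snc on $\tilde Z_1\times\tilde Z_2$ in the coordinates $(x_\bullet,y_\bullet)$. No further blow-ups are required, so the only boundary and exceptional components one must check are of the form $E_1\times\tilde Z_2$ and $\tilde Z_1\times E_2$, whose coefficients (hence discrepancies) are literally those of the single factors, and lc/klt follows at once. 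Your secondary observation that over a mixed center like $E_1\times E_2$ the log discrepancies add — the exceptional divisor of its blow-up has log discrepancy $(1+a_1)+(1+a_2)$ — is correct and a reassuring cross-check, but not needed once the product log resolution is in place. The remaining subtlety you correctly flag, namely that the normalization of the product is the product of normalizations with conductor $p_1^*\Delta_1+p_2^*\Delta_2$ and product gluing involution, is indeed the nontrivial input from the slc theory and is exactly what one would lift from van Opstall's argument.
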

\begin{proof}
This is the pair version of \cite[Theorem 3.2]{Ops03}. The proof works for the pair case as well, and we leave details for interested readers.
\end{proof}
Lemma \ref{lm:prodsing} together with Proposition \ref{prop:singtotal} immediately implies the following.
\begin{coro}\label{cor:productst}
If $f\colon (X,D)\rarr Y$ is a semi-stable family of slc pairs with general fibers log canonical (resp. klt.) pairs over a smooth variety $Y$, then the pair $(X^r_Y, D^r_Y)$ is log canonical (resp. klt) for every $r>0$.
\end{coro}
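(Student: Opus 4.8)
The statement to prove is Corollary~\ref{cor:productst}: if $f\colon (X,D)\to Y$ is a semi-stable family of slc pairs with general fibers log canonical (resp. klt) pairs over a smooth $Y$, then $(X^r_Y, D^r_Y)$ is log canonical (resp. klt) for every $r>0$.

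The plan is to combine Lemma~\ref{lm:prodsing} with Proposition~\ref{prop:singtotal} as the excerpt announces. First I would argue that the iterated fiber product $f^r_Y\colon (X^r_Y, D^r_Y)\to Y$ is again a semi-stable family. Flatness is preserved under base change and composition, so $X^r_Y\to Y$ is flat. For the fibers: the fiber of $f^r_Y$ over $y\in Y$ is the $r$-fold product $(X_y)^r$ of the fiber $X_y$, and $D^r_Y$ restricts on this fiber to $\sum_{i=1}^r p_i^* D_y$. Since each $(X_y, D_y)$ is an slc pair (stable if $f$ is stable), Lemma~\ref{lm:prodsing} applied inductively to the $r$ factors shows that $((X_y)^r, \sum_i p_i^* D_y)$ is an slc (resp. stable slc) pair. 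The relative log canonical divisor behaves well because $K_{X^r_Y/Y} = \sum_i p_i^* K_{X/Y}$ by the standard formula for relative dualizing sheaves of fiber products over a base — so $K_{(X^r_Y, D^r_Y)/Y} = \sum_i p_i^*(K_{X/Y}+D) = \sum_i p_i^* K_{(X,D)/Y}$, which is $\Q$-Cartier (being a sum of pullbacks of a $\Q$-Cartier divisor) and commutes with arbitrary base change since each factor does. This verifies that $f^r_Y$ is a semi-stable family in the sense of the paper's conventions.

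Next, I would check that the general fibers of $f^r_Y$ are log canonical (resp. klt) pairs. Over the Zariski open set $V\subseteq Y$ where the fibers of $f$ are log canonical (resp. klt), the fiber of $f^r_Y$ is $((X_y)^r, \sum_i p_i^* D_y)$, which by the ``moreover'' clause of Lemma~\ref{lm:prodsing} (applied $r-1$ times) is log canonical (resp. klt) whenever $(X_y, D_y)$ is. With $Y$ smooth and $f^r_Y$ now known to be a semi-stable family with general fibers log canonical (resp. klt), Proposition~\ref{prop:singtotal} applies directly and yields that the total space $(X^r_Y, D^r_Y)$ is a log canonical (resp. klt) pair, which is exactly the conclusion.

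The only genuinely delicate point — and thus the main obstacle — is the bookkeeping for $D^r_Y$ and $K_{X^r_Y/Y}$: one must be careful that $D^r_Y = \sum_i p_i^* D$ restricts fiberwise to the divisor $\sum_i p_i^* D_y$ to which Lemma~\ref{lm:prodsing} is tailored, and that the relative canonical bundle of a fiber product really is the sum of the pullbacks (this uses that $f$ is flat with the right duality properties, e.g.\ via $\omega_{X^r_Y/Y} \cong \bigotimes_i p_i^* \omega_{X/Y}$ for a flat family). Once these compatibilities are in place, the induction on $r$ in Lemma~\ref{lm:prodsing} together with a single application of Proposition~\ref{prop:singtotal} finishes the argument with no further work; in particular no new singularity estimates are needed beyond those two inputs.
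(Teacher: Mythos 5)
Your argument is correct and is exactly the route the paper takes: verify via Lemma~\ref{lm:prodsing} that $f^r_Y\colon(X^r_Y,D^r_Y)\to Y$ is again a semi-stable family with log canonical (resp.\ klt) general fibers, and then invoke Proposition~\ref{prop:singtotal}. The paper treats this as immediate from the two inputs, and your careful bookkeeping of $K_{X^r_Y/Y}=\sum_i p_i^*K_{X/Y}$ and the fiberwise restriction of $D^r_Y$ simply fills in the details that the paper leaves to the reader.
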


%\begin{lemma}
%If $f\colon (X,D)\rarr Y$ is a stable family of maximal variation, then for every integer $r>0$, $f^r\colone (X^r, D^r)\to Y$ is also of maximal variation. 
%\end{lemma}
\subsection*{Singularities of divisors with respect to log smooth pairs}
For a line bundle $\cL$ on a smooth projective variety $X$ with $H^0(X, \cL)\neq 0$ and a normal crossing $\Q$-divisor $D$ with coefficients in $[0,1)$, we define 
\[e_D(\cL)\coloneqq \sup\{\frac{1}{\textup{lct}_D(B)}|B\in|\cL|\},\]
where lct$_D(B)$ is the log canonical threshold of $B$ with respect to the pair $(X,D).$  For the definition of $e_D(\cL)$ and results involving it below, we follow ideas in \cite[\S5]{Vie95}, where the case for $D=0$ has been discussed.
\begin{lemma}\label{lm:boundlc}
With $(X,D)$ and $\cL$ as above, if $\cA$ is a very ample line bundle, then we have for every integer $k>0$
\[e_{D}(\cL)\le C\cdot c_1(\cA)^{\textup{dim}X-1}\cdot c_1(\cL)
+1,\]
where the constant $C>0$ depends only on the coefficients of $D$.
\end{lemma}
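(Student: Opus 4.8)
The plan is to reduce the bound on $e_D(\mathcal L)$ to a statement about multiplicities of divisors at points, in the spirit of \cite[\S5]{Vie95}, but carefully tracking the contribution of the boundary $D$. The starting point is the standard comparison between the log canonical threshold and multiplicity: if $B\in|\mathcal L|$ and $x\in X$ is a point, then $\mathrm{lct}_{(X,0)}(B)\ge 1/\mathrm{mult}_x B$, and more generally $\mathrm{lct}_{(X,0)}(B)\ge (\dim X)/\mathrm{mult}_x B$ when one blows up $x$. Since $D$ has coefficients in $[0,1)$, adding $D$ to the pair only perturbs the threshold by a bounded amount: writing the coefficients of $D$ as $d_j<1$ and setting $c=\max_j 1/(1-d_j)$ (this is the constant $C$, depending only on the coefficients of $D$), one gets $\mathrm{lct}_{(X,D)}(B)\ge \tfrac1c\,\mathrm{lct}_{(X,0)}(B')$ for a suitable modification, or more directly $1/\mathrm{lct}_D(B)\le c\cdot(1/\mathrm{lct}_0(B))$ up to the additive constant coming from the discrepancies of $D$ along the blow-up. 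So it suffices to bound $\sup_{B\in|\mathcal L|}1/\mathrm{lct}_{(X,0)}(B)$ by $C'\cdot c_1(\mathcal A)^{\dim X-1}\cdot c_1(\mathcal L)+1$.

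For that, I would argue as in Viehweg: fix $B\in|\mathcal L|$ and a point $x$ lying on $B$ with $m:=\mathrm{mult}_x B$ large; then $\mathrm{lct}_{(X,0)}(B)\le (\dim X)/m$, so $1/\mathrm{lct}_{(X,0)}(B)\ge m/\dim X$, which is the wrong direction — rather, the point is that the \emph{worst} $B$ has bounded multiplicity. Concretely, pick a general complete intersection curve $C$ cut out by $(\dim X-1)$ general members of $|\mathcal A|$; then $\deg(\mathcal L|_C)=c_1(\mathcal A)^{\dim X-1}\cdot c_1(\mathcal L)$, and since $C$ passes through a general point and $B$ does not contain $C$ (for $C$ general), the multiplicity of $B$ at any point of $C$ is at most $\deg(B\cap C)=\deg(\mathcal L|_C)$. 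Running this over enough general points $x\in X$ (it suffices that the chosen points be dense, or that every component of the non-klt locus of $(X,D+\lambda B)$ meet such a curve) controls $\mathrm{mult}_x B$ for $x$ in the relevant locus, and hence bounds $\mathrm{lct}_{(X,0)}(B)$ from below by roughly $1/(c_1(\mathcal A)^{\dim X-1}\cdot c_1(\mathcal L))$ — the "$+1$" absorbing the low-multiplicity regime and the normalization of the log canonical threshold.

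I expect the main obstacle to be the bookkeeping of the boundary $D$: one must verify that intersecting with a general high-degree complete intersection curve interacts well with the pair structure — i.e. that the general curve $C$ is transverse to all strata of $D$, that the center of non-log-canonicity of $(X, D+\lambda B)$ can be detected on such a curve by adjunction/Bertini-type arguments, and that the discrepancies of $D$ contribute only the multiplicative constant $C$ and not something depending on $\mathcal L$ or $\mathcal A$. Making precise the reduction "worst $B$ has bounded local behavior along $D$-strata" is where the coefficients-in-$[0,1)$ hypothesis is essential, and where the constant $C=C(\text{coefficients of }D)$ genuinely enters. Once this is set up, the numerical estimate $1/\mathrm{lct}_D(B)\le C\cdot c_1(\mathcal A)^{\dim X-1}\cdot c_1(\mathcal L)+1$ follows by combining the multiplicity bound on the curve with the elementary inequalities between $\mathrm{lct}$, multiplicity, and discrepancies.
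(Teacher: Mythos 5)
Your proposal takes a genuinely different route from the paper's. The paper proves the lemma by induction on $\dim X$: given $\Gamma\in|\cL|$, it restricts to a general hyperplane section $H\in|\cA|$, invokes the inductive bound for $(H,D|_H)$ and $\cL|_H$, and then uses inversion of adjunction for multiplier ideals to lift klt-ness from $H$ back to $X$. You instead argue directly by bounding the multiplicity of $\Gamma$ at every point via a general complete intersection curve in $|\cA|$, and then converting the multiplicity bound into a log canonical threshold bound. Both are in the spirit of \cite[\S 5]{Vie95}; yours trades the inversion-of-adjunction step for an explicit discrepancy estimate and in fact yields the slightly sharper bound $e_D(\cL)\le C\cdot c_1(\cA)^{\dim X-1}\cdot c_1(\cL)$, without the extra ``$+1$''.

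The step you flag as uncertain, comparing $\textup{lct}_D$ with $\textup{lct}_0$, does go through cleanly. With $a=\max(\textup{coefficients of }D)$ and $\lambda_0=\textup{lct}_0(B)$, for any divisor $E$ over $X$ one has $\textup{ord}_E(D)\le a\,\textup{ord}_E(\supp D)\le a\bigl(a_E(X,0)+1\bigr)$ since $(X,\supp D)$ is log canonical, and $\lambda_0\,\textup{ord}_E(B)\le a_E(X,0)+1$ since $(X,\lambda_0 B)$ is log canonical. Hence for $\lambda<(1-a)\lambda_0$,
\[
a_E(X,D+\lambda B)\ge a_E(X,0)\bigl(1-a-\lambda/\lambda_0\bigr)-a-\lambda/\lambda_0>-1,
\]
using $a_E(X,0)\ge 0$ on a smooth $X$. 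Therefore $\textup{lct}_D(B)\ge(1-a)\,\textup{lct}_0(B)$, i.e.\ $1/\textup{lct}_D(B)\le C/\textup{lct}_0(B)$ with $C=1/(1-a)$, which is exactly the multiplicative loss you anticipated — no additive correction needed. Combined with $\textup{lct}_0(B)\ge 1/\max_x\textup{mult}_x(B)\ge 1/(c_1(\cA)^{\dim X-1}\cdot c_1(\cL))$ via the general complete intersection curve through $x$ (which exists for \emph{every} $x$ by very ampleness), this closes the argument. One small slip to fix: blowing up $x$ gives the \emph{upper} bound $\textup{lct}_x(B)\le\dim X/\textup{mult}_x(B)$, not the lower bound $\ge$ you wrote; but you only need and actually use the correct lower bound $\textup{lct}_x(B)\ge 1/\textup{mult}_x(B)$.
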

\begin{proof}
We first set $a$ to be the maximum of the coefficeints of $D$ and $C=1/(1-a)$. We use induction on dimension to prove the statement.
If $X$ is a curve, then clearly 
\[e_D(\cL)\le C\cdot\textup{deg}(\cL)+1.\]
If dim$X>1$, for $\Gamma\in |\cL|$, consider a general section $H$ of $|\cA|$ so that $H+D$ is normal crossing and $H$ and $\Gamma$ do not share commnon components. The induction hypothesis tells us that 
\[e_{D|_H}(\cL|_H)\le C\cdot c_1(\cA|_H)^{\textup{dim}X-2}\cdot c_1(\cL|_H)+1=C\cdot c_1(\cA)^{\textup{dim}X-1}\cdot c_1(\cL)+1.\]
Then whenever $m>C\cdot c_1(\cA)^{\textup{dim}X-1}\cdot c_1(\cL)+1$, the multiplier ideal 
\[\cJ(H, 1/m\Gamma|_H+D|_H)\]
is trivial. By inversion of adjunction for multiplier ideals, $(X,1/m\Gamma+D)$ is klt.  Therefore, we have
\[e_{D}(\cL)\le C\cdot c_1(\cA)^{\textup{dim}X-1}\cdot c_1(\cL)
+1.\]
\end{proof}

We state a pair version of \cite[Proposition 5.19]{Vie95}, for later using. 

\begin{lemma}
Given a family $f:(X,D)\to Y$, with $(X,D)$ klt, all fibers smooth, $Y$ smooth, and an effective Cartier divisor $\Gamma$ on $X$. Fix a closed point $y\in Y$. Let $D'\subseteq D$ and $\Gamma'\subseteq \Gamma$ be those components that do not contain $X_y$. For any rational number $$m\ge e_{D'|_{X_y}}(\mathcal{O}_{X_y}(\Gamma'|_{X_y})),$$
assume that $X_y$ is not contained in the non-klt locus of $(X,D+1/m\Gamma)$, then there is an open neighborhood $U$ of $X_y$, such that $(U, D|_U+1/m\Gamma|_U)$ is klt.
\end{lemma}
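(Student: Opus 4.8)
The plan is to reduce this statement to Lemma \ref{lm:boundlc} applied on the fiber $X_y$, and then to propagate the klt property from the fiber to a neighborhood by the openness of klt loci. First I would note that since all fibers are smooth and $(X,D)$ is klt with $Y$ smooth, the divisor $\Gamma$ meets $X_y$ in a divisor, and I may decompose $D = D' + D''$, $\Gamma = \Gamma' + \Gamma''$, where $D''$ and $\Gamma''$ collect the components containing $X_y$. Restricting to $X_y$, by definition of $e_{D'|_{X_y}}$ and the hypothesis $m \ge e_{D'|_{X_y}}(\sO_{X_y}(\Gamma'|_{X_y}))$, the pair $(X_y, D'|_{X_y} + \frac{1}{m}\Gamma'|_{X_y})$ is klt; equivalently the multiplier ideal $\cJ(X_y, D'|_{X_y} + \frac{1}{m}\Gamma'|_{X_y})$ is trivial.

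Next I would use inversion of adjunction for multiplier ideals, exactly as in the proof of Lemma \ref{lm:boundlc}: since $X_y$ is a smooth fiber of a morphism to a smooth base, it is a Cartier divisor in $X$ (locally defined by a regular function pulled back from $Y$) moving in a basepoint-free linear system, so $X_y + D' + \Gamma'$ is, after a general choice of the defining function, simple normal crossing in a neighborhood together with the support of $D' + \Gamma'$. Then triviality of the restricted multiplier ideal on $X_y$ forces $\cJ(X, D' + \frac{1}{m}\Gamma')$ to be trivial along $X_y$, i.e. $(X, D' + \frac{1}{m}\Gamma')$ is klt in an open neighborhood $U_0$ of $X_y$. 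Now I add back the components $D''$ and $\frac{1}{m}\Gamma''$ containing $X_y$: by hypothesis $X_y$ is not contained in the non-klt locus of $(X, D + \frac{1}{m}\Gamma)$, and the non-klt locus is a closed subset, so shrinking to an open $U \subseteq U_0$ I may assume $(X, D + \frac{1}{m}\Gamma)$ is klt on $U$, which is the desired conclusion.

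The main subtlety — and what I expect to be the genuine obstacle rather than a formality — is the bookkeeping at the components through $X_y$. One cannot simply restrict the whole of $D$ and $\Gamma$ to $X_y$, since the components containing $X_y$ do not restrict to divisors; this is why the statement only requires the bound $m \ge e_{D'|_{X_y}}(\sO_{X_y}(\Gamma'|_{X_y}))$ on the ``transverse'' part and imposes the separate global hypothesis that $X_y$ avoids the non-klt locus of the full pair. So the argument must be organized as: (i) handle $D' + \frac1m \Gamma'$ by restriction-to-$X_y$ plus inversion of adjunction, getting kltness in a neighborhood; (ii) handle the remaining components purely by the openness of the klt locus and the avoidance hypothesis. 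I would make sure the general-position choices (for the defining equation of $X_y$, and for normal crossings of $X_y + \supp(D'+\Gamma')$) are all available locally near $X_y$, which they are because $f$ is smooth with $(X,D)$ log smooth in the relevant charts; then steps (i) and (ii) glue on the common open set $U$.
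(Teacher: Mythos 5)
Your steps (i)--(iii) are reasonable: the bound on $m$ makes $(X_y, D'|_{X_y} + \tfrac{1}{m}\Gamma'|_{X_y})$ log canonical (beware, not necessarily klt when $m$ equals $e$), and inversion of adjunction then controls $(X, D' + \tfrac{1}{m}\Gamma')$ in a neighborhood of $X_y$. But step (iv) is a genuine gap. You write: ``by hypothesis $X_y$ is not contained in the non-klt locus of $(X, D + \tfrac{1}{m}\Gamma)$, and the non-klt locus is a closed subset, so shrinking to an open $U\subseteq U_0$ I may assume $(X, D + \tfrac{1}{m}\Gamma)$ is klt on $U$.'' The conclusion requires $U$ to be a neighborhood of all of $X_y$, i.e. $X_y \subseteq U$. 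But ``$X_y$ is not contained in the non-klt locus $N$'' only says $X_y \not\subseteq N$; a priori $N \cap X_y$ could be a nonempty proper closed subset of $X_y$, and then no open $U \supseteq X_y$ avoids $N$. The whole point of the lemma is precisely the dichotomy: under the numerical hypothesis on $m$, either $X_y\subseteq N$ or $X_y\cap N=\emptyset$ — that is, near $X_y$ the non-klt locus of $(X, D + \tfrac1m\Gamma)$ is a union of fibers. Your steps (i)--(iii) prove nothing of the sort for the ``vertical'' components $D''$, $\tfrac1m\Gamma''$, and openness of the klt locus alone cannot supply it; you have effectively restated the claim rather than proved it.

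The paper sidesteps all of this by reducing to \cite[Proposition 5.19]{Vie95}, which is exactly this dichotomy statement in the $D=0$ case. Concretely, the paper sets $\Delta_m = k(mD + \Gamma)$ where $k$ is the smallest positive integer making $kD$ integral (so $\Delta_m$ is a genuine effective Cartier divisor), checks that $mk \ge e\bigl(\cO_{X_y}(\Delta_m'|_{X_y})\bigr)$, and then invokes Viehweg's proposition for the single integral divisor $\Delta_m$ with threshold $\tfrac{1}{km}$, recovering the pair statement since $\tfrac{1}{km}\Delta_m = D + \tfrac1m\Gamma$. If you wanted to carry out a direct argument along your lines, you would essentially have to reprove Viehweg's Proposition 5.19 from scratch (semicontinuity/constancy of the multiplier ideal cosupport along the fiber), which is substantially more than what your step (iv) offers.
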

\begin{proof}
For any rational number $m$ satisfies the condition, denote $\Delta_m=k(mD+\Gamma)$, for the smallest positive integer $k$ that makes $kD$ integral. It is not hard to check that $mk\ge e(\mathcal{O}_{X_y}(\Delta_m'|_{X_y}))$, where $\Delta'\subset \Delta$ are those components not containing $X_y$. Applying  \cite[Proposition 5.19]{Vie95} on $\Delta_m$, we have that, if $X_y$ is not contained in the non-klt locus of $(X,1/km\Delta_m)=(X,D+1/m\Gamma)$, then there is an open neighborhood $U$ of $X_y$, such that $(U, 1/km\Delta_m|_U)=(U, D|_U+1/m\Gamma|_U)$ is klt. 
\end{proof}

\begin{lemma}\label{lm:prode}
Considering the $r$-th fiber product $(X^r, D^r)$ of the pair $(X,D)$ and $$\cB=\bigotimes^r_i p_i^*\cL,$$ 
in the situation of Lemma \ref{lm:boundlc}, we have $e_{D^r}(\cB)=e_D(\cL).$
\end{lemma}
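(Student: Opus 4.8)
The plan is to prove the equality $e_{D^r}(\cB) = e_D(\cL)$ by establishing the two inequalities separately, exploiting the product structure of $(X^r, D^r)$ and the divisor $\cB = \bigotimes_i p_i^*\cL$.

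\textbf{The inequality $e_{D^r}(\cB) \ge e_D(\cL)$.} This direction I expect to be the easy one. Given any $B \in |\cL|$ on $X$, I would pull it back by the first projection to get $p_1^*B \in |p_1^*\cL| \subseteq |\cB|$ (after tensoring with effective members of $|p_i^*\cL|$ for $i\ge 2$, or more cleanly by noting $p_1^*B$ plus an effective divisor is in $|\cB|$, which can only decrease the log canonical threshold). Since $(X^r, D^r) \to X$ via $p_1$ is a smooth morphism (being a base change of the smooth morphism $X^{r-1}_Y \to Y$ along $f$) and $D^r = p_1^*D + (\text{divisor pulled back via other coordinates, transverse to } p_1^*B)$, one has $\mathrm{lct}_{D^r}(p_1^*B) \le \mathrm{lct}_D(B)$ by the behaviour of log canonical thresholds under smooth pullback together with the transversality of the remaining boundary components. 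Taking the supremum over $B \in |\cL|$ gives the desired bound.

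\textbf{The inequality $e_{D^r}(\cB) \le e_D(\cL)$.} This is the substantive direction and where I expect the main obstacle. I would take an arbitrary member $B' \in |\cB|$ and must show $\mathrm{lct}_{D^r}(B') \ge 1/e_D(\cL)$. The point is that a general such $B'$ need not be a sum of pullbacks $\sum_i p_i^* B_i$; however, one can use a restriction/specialization argument. Fix a general point; restricting $B'$ to a fiber of the projection $p_1 \colon X^r \to X$ over a general point of $X$ identifies that fiber with $X^{r-1}$ and the restriction of $\cB$ with the analogous tensor product $\cB'$ on $X^{r-1}$, while $D^r$ restricts to $D^{r-1}$ plus the transverse divisor $D|_{X}$-fiber; by induction on $r$ one controls $e_{D^{r-1}}(\cB')$, and then inversion of adjunction for multiplier ideals (exactly as used in the proof of Lemma~\ref{lm:boundlc}) propagates the klt-ness of $(X^r, D^r + \frac{1}{m}B')$ from the general fiber to a neighborhood whenever $m > e_D(\cL)$. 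The base case $r=1$ is the definition. The delicate points are: ensuring the restriction of $\cB$ to a $p_1$-fiber is genuinely the product bundle on $X^{r-1}$ (this is where $\cB = \bigotimes p_i^*\cL$ is used crucially — the $p_1^*\cL$ factor becomes trivial on a fiber, leaving $\bigotimes_{i\ge 2} p_i^*\cL$), and verifying that the non-klt locus of $(X^r, D^r + \frac1m B')$ cannot contain a whole $p_1$-fiber for general choices, so that the inversion-of-adjunction hypothesis is met.

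\textbf{Assembly.} Combining the two inequalities yields $e_{D^r}(\cB) = e_D(\cL)$. I would organize the write-up so that the upper bound is proved by induction on $r$ using the projection $p_r$ (or $p_1$) to peel off one factor at a time, invoking Lemma~\ref{lm:boundlc}'s inversion-of-adjunction technique at each step, while the lower bound is the one-line pullback observation. The main obstacle, as noted, is the upper bound: making precise that restricting to a general fiber of a coordinate projection reduces $(X^r, D^r, \cB)$ to $(X^{r-1}, D^{r-1}, \cB_{r-1})$ together with a transverse smooth divisor, and that klt-ness spreads out from this fiber.
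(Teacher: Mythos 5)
Your lower bound and the overall inductive structure of the upper bound are on the right track, but the upper bound as sketched has a genuine gap, and it is exactly at the point you flag as ``the delicate point.'' Applying the pair version of Viehweg's Proposition 5.19 (the unnamed lemma just before the statement, whose hypothesis you verify by induction) to the single projection $p_1\colon X^r\to X$ gives you only this: if the fiber $X^r_x$ is not contained in the non-klt locus of $(X^r, D^r+\tfrac1m\Gamma)$, then a neighborhood of $X^r_x$ is klt. Equivalently, the non-klt locus is \emph{saturated} for $p_1$, i.e.\ of the form $p_1^{-1}(T_1)$ for some subscheme $T_1\subseteq X$. Propagation ``from a general fiber'' therefore only yields klt-ness on a dense open set, and there is nothing in your argument that rules out $T_1\neq\emptyset$. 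You acknowledge the obstacle but do not supply the missing idea, so as written the proof does not close.

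The paper's proof resolves this by running the same argument for \emph{every} coordinate projection $pr_i\colon X^r\to X$, $i=1,\dots,r$, simultaneously. Each application shows the non-klt locus $N$ has the form $pr_i^{-1}(T_i)$. Since $N$ is a proper closed subset (the pair is klt at the generic point because $D^r$ has coefficients $<1$), the only way $N$ can be a union of $pr_i$-fibers for all $i$ at once is $N=\emptyset$: if $N\neq\emptyset$, pick $x=(x_1,\dots,x_r)\in N$; then $pr_2^{-1}(x_2)\subseteq N=pr_1^{-1}(T_1)$, and since $pr_1$ maps $pr_2^{-1}(x_2)$ onto $X$ this forces $T_1=X$ and $N=X^r$, a contradiction. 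This simultaneous use of all $r$ projections is the crucial step absent from your write-up. Incidentally, for the lower bound the paper simply cites Lemma \ref{lm:prodsing} rather than the smooth-pullback computation you propose; your version (pull back one $B\in|\cL|$ and tensor with effective sections from the other factors, noting both operations can only lower the lct) is a perfectly valid and perhaps more explicit alternative.
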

\begin{proof}

By Lemma \ref{lm:prodsing}, one has $e_{D^r}(\cB)\ge e\coloneqq e_D(\cL)$. To prove $e_{D^r}(\cB)\le e$, we use induction on $r$. Consider $pr_i:(X^r, D^r)\to X$, the natural projection onto the $i$-th factor of $X^r$. 
Fix any geometric fiber $X^r_x$ of $pr_i$, and any section $\Gamma$ of $\cB$, with the same notations as in the previous lemma, we have $e\ge e_{D^{r-1}}(\cO_{X^r_x}(\Gamma'|_{X^r_x}))$. 
This is because the fact that $\cO_{X^r_x}(\Gamma'|_{X^r_x})=\cO_{X^r}(\Gamma')|_{X^r_x}\subset \cB|_{X^r_x}$, and the induction hypothesis. This implies that $e$ satisfies the requirement in the previous lemma. Hence we have the non-klt locus of $(X^r, D^r+1/m\Gamma)$ is of the form $pr_i^{-1}(T_i)$, for some subscheme $T_i$ of $X$. Since this holds for all $i$, the non-klt locus has to be empty.
\end{proof}

%For a projective log smooth pair $(W, \Gamma)$ with non-negative Kodaira dimension,  recall that
%\[e(\omega^m_W(m\Gamma))\coloneqq\sup\{\frac{1}{\textup{lct}(B)}|B\in|\omega^m_W(m\Gamma)|\}\]
%for all $m>0$ with $m\Gamma$ integral, where lct$(B)$ is the log canonical threshold of the effective divisor $B$. 

%The following lemma is a easy consequence of \cite[Cor. 5.21]{Vie97} and the fact that the log canonical threshold is lower semicontinuous in family. Its proof is left for interested readers.  
\begin{coro}\label{cor:uniconst}
Suppose that $f: (U,D)\to V$ is a log smooth family with $(U,D)$ a klt pair. Then there exists a constant $C>0$ depending on the dimension of fibers and the coefficients of $D$, so that for every integer $r>0$ and every $y\in V$, 
\[e_{D^r}(\omega^k_{U^r_y}(kD_y^r))<Ck\]
for all $k>0$ with $kD$ integral, where $(U_y, D_y)$ is the fiber over $y$.
\end{coro}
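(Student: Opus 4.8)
The plan is to combine Lemma~\ref{lm:boundlc} with Lemma~\ref{lm:prode} and to control the intersection numbers appearing there uniformly in $y$ using flatness of the relevant direct image sheaves. First I would fix a relatively very ample line bundle $\cA$ on $U$ over $V$ (shrinking $V$ if necessary, or after a stratification, since we only need the constant for each $y$). For each $y\in V$, the restriction $\cA_y\coloneqq\cA|_{U_y}$ is very ample on the fiber $U_y$, and the pair $(U_y,D_y)$ is log smooth by the definition of a log smooth family; its boundary coefficients are exactly the coefficients of $D$, hence independent of $y$. Applying Lemma~\ref{lm:boundlc} to the line bundle $\cL=\omega_{U_y}^k(kD_y)=\cO_{U_y}(kK_{(U_y,D_y)})$ on $X=U_y$ gives
\[
e_{D_y}\bigl(\omega^k_{U_y}(kD_y)\bigr)\le C_0\cdot c_1(\cA_y)^{\dim U_y-1}\cdot c_1\bigl(\omega^k_{U_y}(kD_y)\bigr)+1,
\]
where $C_0$ depends only on the coefficients of $D$.

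Next I would observe that $c_1(\cA_y)^{d-1}\cdot c_1(\omega^k_{U_y}(kD_y))=k\cdot\bigl(c_1(\cA_y)^{d-1}\cdot c_1(\omega_{U_y}(D_y))\bigr)$, where $d=\dim U_y$, so the right-hand side is $k$ times an intersection number on the fiber plus $1$. The key point is that this intersection number is \emph{locally constant} in $y$: the sheaves $\cA$, $\omega_{U/V}$, and $\cO_U(D)$ (the latter makes sense since $D$ is a divisor flat over $V$ because every stratum of $\supp D$ is smooth over $V$) are all flat over $V$, so the intersection numbers $c_1(\cA_y)^{d-1}\cdot c_1(\cO_{U_y}(D_y))$ and $c_1(\cA_y)^d$ are deformation invariants, constant on connected components of $V$. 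Hence, enlarging $C_0$ to a constant $C$ depending also on these (finitely many) intersection numbers — equivalently on $\dim U_y$ and on the chosen polarization, which we may absorb into the statement's dependence on the dimension of fibers — we get $e_{D_y}(\omega^k_{U_y}(kD_y))<Ck$ for all $k>0$ with $kD$ integral (the ``$+1$'' is absorbed since $k\ge1$).

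Finally, to pass to the fiber products, I would apply Lemma~\ref{lm:prode}. Note that $\omega^k_{U^r_y}(kD^r_y)=\cO_{U^r_y}(kK_{(U^r_y,D^r_y)})$ and, since $K_{(U^r_y,D^r_y)/}=\sum_{i=1}^r p_i^*K_{(U_y,D_y)}$, we have $\omega^k_{U^r_y}(kD^r_y)=\bigotimes_{i=1}^r p_i^*\bigl(\omega^k_{U_y}(kD_y)\bigr)$, which is exactly the bundle $\cB$ of Lemma~\ref{lm:prode} for the log smooth fiber pair $(U_y,D_y)$ and the line bundle $\cL=\omega^k_{U_y}(kD_y)$ (here one uses that $(U_y,D_y)$ is klt with normal crossing boundary of coefficients in $[0,1)$, which holds since $(U,D)$ is klt and $f$ is log smooth, so the hypotheses of Lemmas~\ref{lm:boundlc} and~\ref{lm:prode} are met fiberwise). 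Therefore $e_{D^r_y}(\omega^k_{U^r_y}(kD^r_y))=e_{D_y}(\omega^k_{U_y}(kD_y))<Ck$, with $C$ independent of $r$, which is the claim.

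The main obstacle I anticipate is a bookkeeping one rather than a conceptual one: making sure the constant $C$ really depends only on the declared data (fiber dimension and coefficients of $D$) and not on $y$ or $r$. This requires (i) checking that the coefficients of $D_y$ are genuinely the same for all $y$ — immediate from the definition of a log smooth family — and (ii) checking that the intersection numbers against the auxiliary very ample bundle are bounded uniformly in $y$, which follows from flatness/properness but may force us to pass to a stratification of $V$ into finitely many locally closed pieces and take the maximum of the resulting constants; since $V$ is noetherian this is harmless. One should also be slightly careful that Lemma~\ref{lm:prode} is applied on the fiber $(U_y,D_y)$, not on the total space, so that ``$D$'' there is $D_y$ and the coefficient condition ``in $[0,1)$'' is inherited from $(U,D)$ being klt; this is routine.
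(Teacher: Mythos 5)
Your proof is correct, and it reaches the conclusion via the same two core lemmas (Lemma~\ref{lm:boundlc} and Lemma~\ref{lm:prode}) but handles the uniformity in $y$ by a genuinely different, and somewhat more elementary, argument than the paper's. The paper fixes $y$, produces a per-fiber constant from those two lemmas, and then deduces uniformity by proving that $e_{D^r}(\omega^k_{U^r_y}(kD^r_y))$ is upper semicontinuous in $y$: it constructs a universal divisor over $\mathbb P(f^r_*\omega^k_{U^r/V}(kD^r))\times_V U^r$ parametrizing the linear systems $|\omega^k_{U^r_y}(kD^r_y)|$ and appeals to lower semicontinuity of the log canonical threshold; to know that $f^r_*\omega^k_{U^r/V}(kD^r)$ is locally free (so that the $\mathbb P$-bundle and universal divisor exist), it invokes the deformation invariance of log plurigenera \cite[Theorem 4.2]{HMX18}. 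You bypass all of that by noticing that once a relatively very ample $\cA$ on $U/V$ is fixed, the quantity $c_1(\cA_y)^{d-1}\cdot c_1(\omega^k_{U_y}(kD_y))$ that controls the constant in Lemma~\ref{lm:boundlc} is a deformation invariant, hence locally constant in $y$, so the constant is uniform automatically. Your route avoids the appeal to HMX; the paper's route avoids choosing a relative polarization, but this is immaterial since a relatively very ample bundle always exists for a projective morphism. One small point to tighten: the parenthetical about ``shrinking $V$'' is unnecessary and slightly misleading — $\cA$ exists globally because $f$ is projective, and since the intersection number is locally constant and $V$ has finitely many connected components, no stratification is needed at all.
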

\begin{proof}
For a fixed $y\in V$, by Lemma \ref{lm:boundlc} and Lemma \ref{lm:prode}, there exists a constant $C$ depending on the dimension of the fiber $X_y$ and the coefficients of $D_y$ so that 
\[e_{D^r}(\omega^k_{U^r_y}(kD_y^r))<Ck\]
for all $r$ and all $k$ with $kD$ integral. We then can choose an $C$ working for all $y\in V$, by using the fact that $e_{D^r}(\omega^k_{U_y}(kD_y))$ is upper semicontinuous as a function of $y\in V$, which will be proved later. 

Now we prove the semicontinuity of $e_{D^r}(\omega^k_{U^r_y}(kD^r_y))$. By invariance of plurigenera in the log case \cite[Theorem 4.2]{HMX18}, $f^r_*\omega^k_{U^r/V}(kD^r)$ is locally free for all $k>0$ with $kD$ integral. Set $\mathbb P=\mathbb P(f^r_*\omega^k_{U^r/V}(kD^r))$. We consider the universal divisor $B$ on $\mathbb P\times_V (U^r,D^r)$ parametrizing the linear system $|\omega^k_{U^r_y}(kD_y^r)|$ for all $y\in V$. It is well-known that lct$_{\mathbb P_x\times_V D^r}(B_x)$ is lower semicontinuous as a function of $x\in \mathbb P$. One then easily get the upper semicontinuity of $e_{D^r}(\omega^k_{U_y}(kD_y))$ from the  lower semicontinuity of lct$_{\mathbb P_x\times_V D^r}(B_x)$.
\end{proof}

\subsection*{Stable reduction of log smooth families}
Suppose that $f_V\colon (U, D_U)\to V$ is a log smooth family of projective klt pairs of log general type and $V$ smooth quasi-projective. Using \cite{BCHM}, we consider the relative log canonical model of $f_V$
\[f_V^c\colon (U_c, D_c)\rarr V),\]
which gives a stable family (by deformation invariance of plurigenera in the log case), birational to the original family $f$, where $D_c$ is the strict transform of $D_U$. 

Since $f_V^c$ is a stable family of klt pairs, we can compactify the base and get a stable family over a projective base, by using \cite[Corollary 6.18]{KP16}. More precisely, we have the following commutative diagram:
\begin{center}
    \begin{tikzcd}
    U\arrow[hook]{r} \arrow{d}&
    W\arrow{d}{f}&
    Z\arrow[l,dashrightarrow]\arrow{d}{h}\\
    V\arrow[hook]{r}&
    Y&
    Y'\arrow{l}{\tau},
    \end{tikzcd}
\end{center}
where $V\hookrightarrow Y$ is a compactification with $Y$ projective smooth and $E:= Y\setminus V$ is normal crossing so that there exists a generic finite morphism $\tau\coloneqq Y'\to Y$ with $Y'$ normal and a stable family of slc pairs $h\colon (Z, D)\to Y'$ satisfying 
$$h|_{V'=\tau^{-1}V}=f^c_{V'},$$ 
where $f^c_{V'}$is the induced family of $f^c_V$ after base change $\tau|_{V'}\colon V'\to V$. 

The following reduction process will help us make $\tau$ finite and $Y'$ smooth. By the Raynaud-Gruson flattening theorem (see \cite[Th\'eor\`em 5.2.2]{RG71}), there exists a birational map $\overline{Y}\to Y$ so that $Y'\times_Y\overline{Y}\to\overline{Y}$, the base change of $\tau$, is flat and hence also finite. Replacing $Y$ by $\overline{Y}$ and $Y'$ by the main component of $Y'\times_Y\overline{Y}$, we can hence assume $\tau$ is finite. 
Replacing $Y$ by a further resolution, we can then assume $\tau$ is finite and branched along a normal crossing divisor $\Delta(Y'/Y)$ containing $E$. Then, using Kawamata's covering (see for instance \cite[Corollary 2.6]{Vie95}), there exist a projective manifold $Y_1'$ and a finite map $Y_1'\to Y'$. After replacing $Y'$ by $Y_1'$ and $\tau$ by the composition of the two finite morhism, we can further assume that $Y'$ is a projective manifold and $\tau$ is finite. We also replace the stable family $f'$ by the induced family over the new $Y'$.

Set $D_W$ to be the closure of $D_U$ in $W$. After taking further resolution of the pair $(W,D_W)$ with centers away from $U$, we can assume that 
\[\omega^m_{W/Y}(mD_W)|_U=\omega^m_{U/V}(mD_U)\] 
for all $m>0$ sufficiently divisible and $f^{-1}\Delta(Y'/Y)+\supp(D_W)$ is normal crossing (hence so is $f^{-1}E$). Then we take $W'$ to be a resolution of the main component of $W'\times_Y Y'$ so that 
\[W'|_{g^{-1}V'}=U\times_VV'.\]
%where $V'=\tau^{-1}V$. 
%The we set $D'$ to be the closure of $D^U\times_V V'$ in $W'$, which can be assumed normal crossing after taking further resolution with suitable centers. 
In summary, we have a commutative diagram 
\begin{equation}\label{eq:diag1}
\begin{tikzcd}
  (U, D_U) \arrow[r, hook] \arrow[d, "f_V"] & (W, D_W) \arrow[d, "f"] & W'\arrow[l] \arrow[d, "g"]\\
  V \arrow[r, hook] &Y & Y'.\arrow[l, "\tau"]
  \end{tikzcd}
\end{equation}
Since fibers of $f_V$ are of log general type, we know that $f_V$ and $f_V^c$ are birational and hence so are $W'$ and $Z$.

\begin{prop}\label{prop:stablered}
With the notations as above, after replacing the original family $f_V$ by a birational model, there exists a commutative diagram 
\begin{equation}\label{eq:reddiagram}
\begin{tikzcd}
  (U, D_U) \arrow[r, hook] \arrow[d, "f_V"] & (W, D_W) \arrow[d, "f"] & (W', D')\arrow[l, "\tau_W"] \arrow[d, "g"]\arrow[r,"\sigma"] & (Z,D)\arrow[d, "h"]\\
  V \arrow[r, hook] &Y & Y'\arrow[l, "\tau"] \arrow[r, "="]& Y'
\end{tikzcd}
\end{equation}
satisfying 
\begin{enumerate}[label=(\alph*)]
\item $h\colon (Z,D)\to Y'$ is a stable family of slc pairs and $(Z, D)$ is klt;\label{item:streda}
\item $\sigma$ is birational, $W'$ is smooth, $(W', D')$ is klt and  the support of $D'$ is normal crossing;\label{item:stredb}
\item for all $m>0$ sufficiently large and divisible, there exists an isormorphism
\[g_*\cO_{W'}(m(K_{W'/Y'}+ D'))\stackrel{\simeq}{\rarr} h_*\cO_Z(m(K_{Z/Y'}+D)),\]
and they are both reflexive sheaves over $Y'$.\label{item:stredc}
\item for all $m$ sufficiently large and divisible, there exists an injective morphism
\[g_*\omega^m_{W'/Y'}(m D')\rarr \tau^*f_*\omega^m_{W/Y}(m(D_W+f^*E))\]
which is an isomorphism over $V'=\tau^{-1}(V)$, \label{item:stredd}
\end{enumerate}
\end{prop}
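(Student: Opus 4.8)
The plan is to assemble the diagram \eqref{eq:reddiagram} by combining the constructions already set up in \eqref{eq:diag1} with the stable family $h\colon(Z,D)\to Y'$, and then verify (a)--(d) one at a time. First I would recall that $W'$ was defined as a resolution of the main component of $W\times_Y Y'$, and $Z$ was obtained from the compactification of the relative log canonical model via \cite[Corollary 6.18]{KP16}; since fibers of $f_V$ are of log general type, the families $f_V$ and $f_V^c$ are birational, so $W'$ and $Z$ are birational over $Y'$, which gives the birational map $\sigma\colon W'\dashrightarrow Z$. After a further resolution of $W'$ (with centers over the indeterminacy locus of $\sigma$, hence away from $U\times_VV'$ where $\sigma$ is already an isomorphism), I may assume $\sigma$ is a genuine morphism; simultaneously I arrange $W'$ smooth and $\supp(D')$ normal crossing, where $D'$ is defined by $K_{W'/Y'}+D' = \sigma^*(K_{Z/Y'}+D)$ pushed forward from the earlier crepant bookkeeping. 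This handles (b): $(W',D')$ is klt because $(Z,D)$ is klt by part (a) and $\sigma$ is crepant, so discrepancies only go up.

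For (a), I would invoke Proposition \ref{prop:singtotal}: $h$ is a stable family of slc pairs with general (indeed all) fibers klt pairs over the smooth base $Y'$, so the total pair $(Z,D)$ is klt. Part (a)'s stability is inherited directly from the construction via \cite[Corollary 6.18]{KP16} and the stable reduction discussion preceding the proposition. For (c), the isomorphism $g_*\cO_{W'}(m(K_{W'/Y'}+D'))\xrightarrow{\simeq}h_*\cO_Z(m(K_{Z/Y'}+D))$ is the projection-formula consequence of $\sigma$ being crepant: $\sigma_*\cO_{W'}(m(K_{W'/Y'}+D')) = \cO_Z(m(K_{Z/Y'}+D))$ for $m$ sufficiently divisible (a standard fact for crepant birational morphisms of klt pairs, since the relevant pushforward of a crepant pull-back of a Cartier divisor recovers the divisor), whence $g_* = h_*\circ\sigma_*$ gives the claim. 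Reflexivity of $h_*\cO_Z(m(K_{Z/Y'}+D))$ over $Y'$ follows because $h$ is a stable family (so $K_{Z/Y'}+D$ is $\Q$-Cartier and commutes with base change) and the base $Y'$ is smooth — this is where I would cite the relevant reflexivity/pushforward property of stable families from \cite{KP16}, or alternatively deduce it from local freeness away from codimension two plus Hartogs.

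Part (d) is the main obstacle, and where I would spend the most care. The point is to produce a natural injection $g_*\omega^m_{W'/Y'}(mD')\hookrightarrow\tau^*f_*\omega^m_{W/Y}(m(D_W+f^*E))$ that is an isomorphism over $V'$. Over $V'$ everything is log smooth and $\sigma$ is an isomorphism over $g^{-1}V' = U\times_VV'$, so there the two sides agree by flat base change along $\tau|_{V'}$ together with the normalization $\omega^m_{W/Y}(mD_W)|_U = \omega^m_{U/V}(mD_U)$ arranged in \eqref{eq:diag1}; the factor $f^*E$ contributes nothing over $V$. The content is therefore the extension across $E' := \tau^{-1}(E)$: I would compare $K_{W'/Y'}+D'$ with $\tau_W^*(K_{W/Y}+D_W) + g^*(\text{branch correction})$, using that $\tau$ is finite and branched along a normal crossing divisor $\Delta(Y'/Y)\supseteq E$ and that $f^{-1}\Delta(Y'/Y)+\supp(D_W)$ is normal crossing (both arranged before the proposition). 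The twist by $f^*E$ on the target is precisely what absorbs the discrepancy coming from the ramification of $\tau$ over $E$ and the difference between the closure $D_W$ and the boundary $D'$ along the non-log-smooth locus, so that the identity map on the open part extends to a sheaf inclusion; the inclusion (rather than equality) globally reflects that $D'$ may be strictly smaller than the full transform of $D_W + f^*E$ after the log canonical modification. Injectivity is automatic once the map is defined, since it is an isomorphism on the dense open $V'$ and both sheaves are torsion-free. I would carry out this comparison via a ramification-formula computation over a neighborhood of a generic point of each component of $E$, which is routine given the normal crossing hypotheses but requires care to track which divisorial contributions land on which side.
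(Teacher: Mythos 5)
Your treatment of parts (a), (b), and (c) essentially matches the paper's: (a) is Proposition \ref{prop:singtotal}, (b) is a common resolution and crepant bookkeeping, (c) is projection formula plus flatness. One small imprecision in (b): you define $D'$ by the crepant relation $K_{W'/Y'}+D'=\sigma^*(K_{Z/Y'}+D)$, but that $D'$ has negative coefficients along some $\sigma$-exceptional divisors. The paper instead sets $D':=D_{W'}-\floor{-D_{W'}^-}$ (adding back the integral part of the negative component), which produces an \emph{effective} boundary with coefficients in $(0,1)$. This matters: the coefficient bound is used crucially in the paper's proof of (d), and downstream in Theorem \ref{thm:ggrefine}, where effectiveness and coefficients in $(0,1)\cap\Q$ are required.

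The real gap is in (d), where you diverge from the paper. You propose a direct divisor comparison, estimating $K_{W'/Y'}+D'$ against $\tau_W^*(K_{W/Y}+D_W+f^*E)$ via a ramification computation over generic points of $E$, with $f^*E$ absorbing the branch correction. This does not work as stated, for two reasons. First, the branch locus $\Delta(Y'/Y)$ of $\tau$ is only assumed to \emph{contain} $E$; $\tau$ may be ramified over divisors inside $V$ as well, so $f^*E$ alone cannot absorb all ramification of $\tau_W$, and a pointwise estimate along $E$ misses this. Second, and more fundamentally, $D'\neq\tau_W^*D_W$ even over $V'$: the boundary $D'$ comes from the relative log canonical model $(Z,D)$, which may have contracted components of $D_U$, so a naive divisor inequality fails already over the ``nice'' locus $V'$. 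The paper sidesteps both issues by splitting (d) into two morphisms that it then composes: the map $g_*\omega^{m}_{W'/Y'}(\tau_W^*A)\to\tau^*f_*\omega^{m}_{W/Y}(A)$ from \cite[Lem.\ 3.2]{Vie83} (which is natural and needs no ramification bookkeeping at all — the same $A$ appears on both sides), and a separate injection $g_*\omega^m_{W'/Y'}(mD')\hookrightarrow g_*\omega^m_{W'/Y'}(m\tau_W^*(D_W+f^*E))$ established not by a divisor inequality but by a pole argument on sections, using the sheaf-level isomorphism \eqref{E:iso over V'} over $V'$ together with the coefficient bound $\mathrm{ord}_C(D')<1$. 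You would need to supply an argument comparable to this second step; the first step you can simply cite.
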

\begin{proof}
First, by construction, we know that $h\colon (Z,D)\to Y'$ is stable, general fibers are klt pairs (fibers over close points in $V$) and special fibers are slc pairs. Therefore, thanks to Proposition \ref{prop:singtotal},  $(Z, D)$ is also a klt pair. Then \ref{item:streda} follows.

Since $W'$ in Diagram \eqref{eq:diag1} is birational to $Z$, we replace $W'$ by a common resolution of $W'$ and $(Z, D)$. We write   
\[K_{W'}+D_{W'}=\sigma^*(K_{Z}+D),\]
and $D_{W'}={D_{W'}}^+-{D_{W}'}^-$ with $ {D_{W'}}^+$ and ${D_{W'}}^-$ effective. Clearly ${D_{W'}}^-$ is exceptional. Then we set
\[ D':=D_{W'}-\floor{-{D_{W'}}^-},\]
and \ref{item:stredb} follows.

Since ${D_{W'}}^-$ is exceptional, 
by projection formula, we know 
\[\sigma_*\cO_{W'}(m(K_{W'/Y'}+D'))\simeq \cO_{Z}(m(K_{Z/Y'}+D))\]
for all sufficiently divisible $m>0$. Pushing forward the above isomorphism by $h$, we obtain the isomorphism in \ref{item:stredc}. Since $h$ is flat, we know that $h_*\cO_Z(m(K_{Z/Y'}+D))$ is reflexive and hence \ref{item:stredc} also follows. 

To prove \ref{item:stredd}, we first apply \cite[Lem. 3.2]{Vie83} and obtain for every Cartier divisor $A$ on $W$ and $k\ge 0$
\begin{equation}\label{eq:morph1}
g_*\omega^{k+1}_{W'/Y'}(\tau_W^*A)\to  \tau^*f_*\omega^{k+1}_{W/Y}(A),
\end{equation}
which is isomphic over $V'$. We also need an injective morphism 
 \begin{equation}\label{eq:morph2}
g_*(\omega^m_{W'/Y'}(mD')) \hookrightarrow g_*(\omega^m_{W'/Y'}(m\tau_W^*(D_W+f^*E)).
 \end{equation} 
Now we started to construct it. By construction, we know $(h^{-1}(V'),D|_{h^{-1}(V')})$ is the relative log canonical model of the log smooth family 
$$(U, D_U)\times_VV'\rarr V',$$ 
and hence 
we know that for all sufficiently divisible $m>0$
\begin{equation}\label{E:iso over V'}
    g_*(\omega^m_{W'/Y'}(m\tau_W^*(D_W))|_{V'}\simeq g_*(\omega^m_{W'/Y'}(mD'))|_{V'}.
\end{equation}
For a open subset $U'\subseteq Y'$ and a section 
$$s\in \Gamma(g_*(\omega^m_{W'/Y'}(m D')),U')=\Gamma(\omega^m_{W'/Y'}(m D'),g^{-1}U'),$$
we know $(s)+(K_{W'/Y'}+m D')|_{g^{-1}U'}\ge 0$; here we treat $s$ as a rational function and $(s)$ is the divisor of $s$. Since the coefficients of $D'$ are in $(0,1)$, we know that for every prime divisor $C$, if $C$ is supported on the support of $D'$ and the support $\tau_W^*(D_W+f^*E)$, then 
\[\textup{ord}_C(D')\le \textup{ord}_C(\tau_W^*(D_W+f^*E)).\]
Hence, if 
\[s\notin \Gamma(\omega^m_{W'/Y'}(\tau_W^*(D_W+f^*E),g^{-1}U'),\]
then $s$ must have poles along prime divisors supported on the support of $ D'$ but not on the support $\tau_W^*(D_W+f^*E)$. Hence, we obtain
\[s|_{g^{-1}(\tau^{-1}V)}\notin \Gamma(\omega^m_{W'/Y'}(\tau_W^*(D_W+f^*E),g^{-1}(\tau^{-1}V)).\] 
However, since $g_*(\omega^m_{W'/Y'}(m\tau_W^*D_W))|_{\tau^{-1}V}\simeq g_*(\omega^m_{W'/Y'}(m D'))|_{\tau^{-1}V}$, this can not happen.  
Therefore, we obtain the morphism \eqref{eq:morph2}.  Clearly it is isomorphic over $\tau^{-1}{V}$.

Taking $k=m-1$ and 
$$A=m(D_W+f^*E)$$ in \eqref{eq:morph1}, we get a morphism 
\[g_*\omega^{m}_{W'/Y'}(m\tau_W^*(D_W+f^*E))\to  \tau^*f_*\omega^{m}_{W/Y}(m(D_W+f^*E)).\] 
Composing it with \eqref{eq:morph2}, we obtain the desired morphism in \ref{item:stredd},
\[g_*\omega^m_{W'/Y'}(mD')\rarr \tau^*f_*\omega^m_{W/Y}(m(D_W+f^*E)).\]
\end{proof}

\section{Generic freeness of direct images of pluri-canonical sheaves}\label{sect:genfree}
In this section, we will prove generic freeness of direct images of pluri-canonical sheaves, based on the stable reduction constructed in \S \ref{sect:stred}. This property will be used in \S \ref{sect:higgs}, for constructing the Higgs sheaf.
%on the whole base space $Y$, instead of outside of a codimension at least two subset of $Y$, as in \cite{PS17}. This is essential for proving Theorem \ref{thm:main} (2).

The strategy we use is mainly the klt-pair generalization of  that in \cite[Appendix]{PTW18}. 
We also need an analytic extension theorem of Cao \cite[Theorem 2.10]{Cao}, which is essentially due to Berndtsson, P\u{a}un and Takayama. We present here a klt-pair version for the application in this article, which is an immediate consequence of the proof in $loc.$ $cit.$

\begin{theorem}\label{thm:Cao}
Let $f:(X,D)\to Y$ be a family between smooth projective varieties with $(X,D)$ being a klt pair and let $\cL$ be a line bundle on $X$ with a singular metric $h$ such that $i\Theta_{h}(\cL)\ge 0$ in the sense of currents. Let $\cA$ be a very ample line bundle on $Y$ such that the global sections of $\cA\otimes \omega_Y^{-1}$ separate $2n$-jets,  where $n$ is the dimension of $Y$, and let $V\subseteq Y$ be a Zariski open set such that $f$ is flat over $V$ and $h^0(X_y, \omega^k_{X/Y}(kD)\otimes \cL|_{X_y})$ is constant over $y\in V$, for an integer $k$ with $kD$ integral.  Assume also  that the analytic multiplier ideal 
$$\mathcal J((h+h_{kD})^{\frac{1}{k}}|_{X_y})=\sO_{X_y},$$ 
for $y\in V,$ where $h_D$ is the natural singular metric given by $D$. 
Then,
\[f_*(\omega^k_{X/Y}(kD)\otimes \cL)\otimes \cA\]
is globally generated over $V$.
\end{theorem}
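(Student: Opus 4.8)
The plan is to deduce this from Cao's extension theorem \cite[Theorem 2.10]{Cao} by packaging our pluri-log-canonical sheaf as an ordinary pluri-canonical sheaf twisted by a suitable line bundle with a mild singular metric, so that the klt hypothesis of the pair is exactly what makes the multiplier ideal trivial. Concretely, since $(X,D)$ is klt and all fibers are smooth, writing $D=\sum a_j D_j$ with $a_j\in[0,1)$, the divisor $D$ gives a line bundle $\sO_X(D)$ (after clearing denominators and working with $\Q$-line bundles, or passing to a multiple) together with its natural singular metric $h_D$ coming from the defining sections of the $D_j$; the curvature current $i\Theta_{h_D}(\sO_X(D))$ equals the integration current along $D$, which is $\ge 0$. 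One then checks that $\omega^k_{X/Y}(kD)\simeq \omega^k_{X/Y}\otimes \sL'$ where $\sL'$ carries a metric of nonnegative curvature, so the hypothesis "$i\Theta_h(\sL)\ge 0$ in the sense of currents" is met.

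First I would set up the identification $\omega^k_{X/Y}(kD)\otimes\sL = \omega^k_{X/Y}\otimes(\sL\otimes\sO_X(kD))$ and equip $\sL\otimes\sO_X(kD)$ with the product metric $h\cdot h_{kD}$, whose curvature is still $\ge 0$ since both factors are. Then I would verify the jet-separation hypothesis: choosing $\cA$ very ample enough that $\cA\otimes\omega_Y^{-1}$ separates $2n$-jets is an open-ended condition, satisfied after replacing $\cA$ by a high tensor power, so this is harmless. Next, the constancy of $h^0(X_y,\omega^k_{X/Y}(kD)\otimes\sL|_{X_y})$ over a Zariski-open $V$: if this is not part of the ambient hypotheses when the lemma is invoked, it holds generically by upper semicontinuity plus the fact that the relevant direct image is torsion-free/coherent, so shrinking $V$ suffices; in our applications it will come from invariance of log plurigenera \cite[Theorem 4.2]{HMX18}. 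The crucial point is the multiplier ideal computation: I must check $\mathcal J((h\cdot h_{kD})^{1/k}|_{X_y})=\sO_{X_y}$ for $y\in V$. The metric $h$ on $\sL$ is only assumed to have semipositive curvature, so a priori $h^{1/k}$ could have a nontrivial multiplier ideal; but the point is that in the intended application $h$ is a \emph{smooth} (or locally bounded) metric — or more precisely, the only singularities of $h\cdot h_{kD}$ come from $h_{kD}$, whose $k$-th root $h_{kD}^{1/k}=h_D$ has multiplier ideal $\sO_X$ precisely because $(X,D)$ is klt, i.e.\ $\lfloor D\rfloor=0$ and the coefficients are $<1$, restricting to $X_y$ (which is not contained in any $D_j$ since $D$ is relatively snc). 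So the klt condition of the total pair forces triviality of the analytic multiplier ideal on every fiber.

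With these verifications in place, Cao's theorem \cite[Theorem 2.10]{Cao} applies directly to $f:(X,D)\to Y$, $\sL$, and the Zariski open $V$, and yields that
\[
f_*(\omega^k_{X/Y}(kD)\otimes\sL)\otimes\cA
\]
is globally generated over $V$, which is the assertion. The main obstacle I anticipate is not the geometry but the analytic bookkeeping: making sure the singular metric $h_D$ associated to a $\Q$-divisor is set up correctly (clearing denominators, the $k$-th root, and the fact that $D$ has snc support meeting $X_y$ transversally so the restriction $h_D|_{X_y}$ is still the metric of a klt pair on the fiber), and confirming that the proof in \cite{Cao} genuinely only uses the fiberwise multiplier-ideal triviality hypothesis in the stated form — which the authors assert is "an immediate consequence of the proof in loc.\ cit." I would double-check that the pair version does not require any new positivity input beyond $i\Theta_h(\sL)\ge 0$ and the klt hypothesis, and in particular that no hypothesis like $\sL$ being a multiple of $\omega_{X/Y}$ is secretly used.
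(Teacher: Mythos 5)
The paper does not actually prove Theorem~\ref{thm:Cao}; the authors simply assert that this klt-pair version is ``an immediate consequence of the proof in \textit{loc.\ cit.}''\ (Cao's Theorem~2.10). Your approach---repackaging $\omega^k_{X/Y}(kD)\otimes\cL$ as $\omega^k_{X/Y}\otimes\bigl(\cL\otimes\cO_X(kD)\bigr)$, equipping $\cL\otimes\cO_X(kD)$ with the product singular metric $h\cdot h_{kD}$ (curvature $\ge0$ since both factors are), and observing that the constancy-of-$h^0$ and multiplier-ideal hypotheses then translate literally into the corresponding hypotheses of Cao's theorem---is the natural and correct way to obtain the statement, and it suggests the stronger conclusion that one can apply Cao's theorem as a black box rather than having to rerun its proof.

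The one genuine confusion in your write-up is the paragraph where you set out to \emph{verify} that $\mathcal J\bigl((h\cdot h_{kD})^{1/k}|_{X_y}\bigr)=\sO_{X_y}$ from the klt hypothesis alone. That triviality is listed as a \emph{hypothesis} of the theorem, not something to be deduced in its proof, and for good reason: the klt condition controls only the $h_{kD}$ factor, while $h$ is an arbitrary singular metric with semipositive curvature and can itself contribute a nontrivial multiplier ideal after taking the $k$-th root. Your parenthetical ``in the intended application $h$ is smooth'' belongs to the discussion of Theorem~\ref{thm:ggrefine} (where the metric is induced by a fixed effective divisor $l\Gamma$ and the klt condition on $(X'_y,(\frac{1}{Cm}\Gamma+D')|_{X'_y})$ supplies the multiplier-ideal hypothesis), not to the proof of Theorem~\ref{thm:Cao} itself. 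If you separate those two layers cleanly---``Theorem~\ref{thm:Cao} follows from Cao's Theorem~2.10 by the substitution $\cL\rightsquigarrow\cL(kD)$, $h\rightsquigarrow h\cdot h_{kD}$'', and then separately, ``in the application the multiplier-ideal hypothesis holds because of the klt bound on the log canonical threshold''---your argument is correct and fills in the step the paper leaves implicit. A final minor point you handle correctly but should flag: for the restriction $h_{kD}|_{X_y}$ to make sense and for the algebraic-analytic multiplier-ideal comparison to be painless, one uses that $D$ has snc support not containing $X_y$, which holds because $(X,D)$ is klt and $D$ is relatively snc over $V$.
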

It is worth mentioning that the choice of $\cA$ depends only on $Y$ but not on the family $f$; see \cite[Remark 2.11]{Cao}.

%and has a good minimal model over $V$ and its log canonical model over $V$ has maximal variation. We perturbate the coefficients of  $D$ so that the new pair $(U, D_U=(1-\epsilon) D)$ is klt for some $0<\epsilon\ll1$. By invariance of plurigenera in the log case, the new family $f_V\colon (U, D_U)\to V$ has log smooth klt fibers of log general type and by Proposition \ref{P:compare var}, it is also of maximal variation.
 \begin{theorem}\label{thm:ggrefine}
Let $f_V\colon (U, D)\to V$ be a log smooth family of log general type klt pairs with maximal variation. After replacing the family $f_V$ by a birational model, there exists a projective family of pairs $f_X:(X, D_X)\to Y$ with $X$ and $Y$ smooth projective, the coefficients of $D$ in $(0,1)\cap \Q$, $E=Y\smallsetminus V$ a simple normal crossings divisor, and $f_X^{*}(E)+D_X$ normal crossing, as well as an ample line bundle $\cA$ on $Y$, such that 
\begin{enumerate}
\item $f_X|_{f^{-1}V}$ is the fiber-product family $f^r_V: (U^r, {D^U}^r)\to V$ for some $r>0$;
\item the sheaf 
$$f_{X*}\omega^k_{X/Y}(kD_X) \otimes \cA(E)^{-k}\big)$$
is generically globally generated for all sufficient large and divisible $k$.  
\end{enumerate}
\end{theorem}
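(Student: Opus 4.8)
The plan is to combine Viehweg's fiber-product construction with the stable reduction of Proposition \ref{prop:stablered} and the analytic extension statement of Theorem \ref{thm:Cao}. First I would run the stable reduction of \S\ref{sect:stred}: after replacing $f_V$ by a birational model, Diagram \eqref{eq:reddiagram} produces the finite cover $\tau\colon Y'\to Y$ with $Y,Y'$ smooth projective, $E=Y\setminus V$ simple normal crossing, the stable slc family $h\colon (Z,D)\to Y'$ with $(Z,D)$ klt, and the klt model $g\colon (W',D')\to Y'$ with $\sigma\colon W'\to Z$ birational and the injection of Proposition \ref{prop:stablered}\ref{item:stredd}
\[g_*\omega^m_{W'/Y'}(mD')\hookrightarrow \tau^*f_*\omega^m_{W/Y}(m(D_W+f^*E)),\]
an isomorphism over $V'$. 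The key point extracted from this is that on $Y'$ one has, for $m$ large and divisible, a nonzero map from (a twist of) the reflexive hull of $h_*\cO_Z(m(K_{Z/Y'}+D))$ to something positive; this is where the maximal variation hypothesis enters, exactly as in Viehweg's weak positivity/Viehweg--Zuo fiber-product trick.

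Next I would take fiber powers. For $r>0$, the $r$-fold fiber product $h^r\colon (Z^r_{Y'},D^r_{Y'})\to Y'$ is again a stable family and $(Z^r_{Y'},D^r_{Y'})$ is klt by Corollary \ref{cor:productst}; likewise the corresponding power of the log smooth family $f^r_V\colon (U^r,{D^U}^r)\to V$ is log smooth of log general type with maximal variation. By Viehweg's argument (weak positivity of $f_*\omega^m_{X/Y}$ together with maximal variation), for suitable $r$ there is a big line bundle on $Y'$, and by taking a further Kawamata cover and pushing forward one gets a line bundle $\mathcal L$ on $Y$ together with, after a base change, an inclusion $\mathcal L\hookrightarrow \big(f^r_{X*}\omega^k_{X/Y}(kD_X)\big)\otimes(\text{neg.\ twist})$ for a model $f_X\colon(X,D_X)\to Y$ of $f^r_V$ over $Y$ with $f_X^*E+D_X$ normal crossing — this realizes conclusion (1), with $D_X$ the closure of ${D^U}^r$ and coefficients in $(0,1)\cap\mathbb Q$ (after the resolution in the stable reduction the boundary may have to be adjusted as in Proposition \ref{prop:stablered}\ref{item:stredb}, keeping coefficients in $(0,1)$).

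Finally, for generic global generation I would apply Theorem \ref{thm:Cao}. The line bundle $\mathcal L':=\mathcal A(E)^{-k}\otimes(\text{the big bundle})^{-1}$, or more precisely the bundle whose sections we want, is equipped with a singular metric of semipositive curvature current coming from the pushforward positivity above (the metric induced by the canonically polarized fibers together with the big line bundle constructed from maximal variation); one checks the klt multiplier ideal condition $\mathcal J((h+h_{kD_X})^{1/k}|_{X_y})=\mathcal O_{X_y}$ using that $(X,D_X)$ is klt and the fibers are smooth, and the plurigenera constancy $h^0(X_y,\omega^k_{X/Y}(kD_X)\otimes\mathcal L|_{X_y})$ over $V$ from invariance of log plurigenera \cite{HMX18} as in Corollary \ref{cor:uniconst}. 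Choosing $\mathcal A$ very ample enough on $Y$ (depending only on $Y$) so that $\mathcal A\otimes\omega_Y^{-1}$ separates $2n$-jets, Theorem \ref{thm:Cao} gives that $f_{X*}\big(\omega^k_{X/Y}(kD_X)\otimes\mathcal L\big)\otimes\mathcal A$ is globally generated over $V$; absorbing the twists and the big bundle yields that $f_{X*}\omega^k_{X/Y}(kD_X)\otimes\mathcal A(E)^{-k}$ is generically globally generated, which is conclusion (2).

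The main obstacle I expect is the bookkeeping that makes the fiber-product/Kawamata-cover construction produce the bundle $\mathcal L$ of the precise shape $\mathcal A(E)^{-k}\otimes(\cdots)$ needed to feed Theorem \ref{thm:Cao}: one must control the branch divisor of $\tau$ (it lies in a normal crossing divisor containing $E$), ensure the metric built from the relative pluricanonical bundle extends across $E$ with semipositive curvature current and trivial multiplier ideal on fibers over $V$, and match the twist by $\cA(E)^{-k}$ with the positivity gained from maximal variation via Viehweg's covering trick — in other words, the heart of the argument is combining the algebraic positivity (weak positivity $+$ maximal variation $\Rightarrow$ bigness after fiber powers) with the analytic extension in a way that is compatible on the boundary $E$.
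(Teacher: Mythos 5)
Your overall strategy matches the paper's: stable reduction (Proposition \ref{prop:stablered}), Viehweg's fiber-product trick, maximal variation giving bigness of a pushforward, and Cao's extension theorem (Theorem \ref{thm:Cao}), followed by descent from $Y'$ to $Y$ via the finite map $\tau$. But there is a concrete gap in the step you yourself flag as the ``main obstacle'': you never actually produce the singular metric with semipositive curvature current on the twisted relative pluricanonical bundle, nor do you justify the fiberwise multiplier-ideal condition required to apply Theorem \ref{thm:Cao}, and your proposed justification (``$(X,D_X)$ is klt and the fibers are smooth'') is not sufficient.

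The paper's argument here is precise and unavoidable. From bigness of $[\det]\,h_*\omega^m_{Z/Y'}(mD)$ one writes $\big([\det]\,h_*\omega^m_{Z/Y'}(mD)\big)^{v}=\big(\tau^*\cA(E)\big)^{2(C+1)m}(E'')$, and combined with the natural splitting map $[\det]\hookrightarrow[\bigotimes^{r_0}]$ this produces, for $r=vr_0$, an effective divisor $\Gamma\in\big|\omega^m_{X'/Y'}(mD'_X)\otimes g'^*(\tau^*\cA(E))^{-2(C+1)m}\big|$; the semipositive current fed into Theorem \ref{thm:Cao} is simply the one attached to $l\Gamma$. The real issue is then that one must know $\big(X'_y,(\tfrac{1}{Cm}\Gamma+D')|_{X'_y}\big)$ is klt for general $y$, i.e.\ the log canonical threshold of $\Gamma|_{X'_y}$ with respect to $(X'_y,D'|_{X'_y})$ is bounded below \emph{uniformly in $r$} --- and $r$ is forced to be large by the bigness step. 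Total-space kltness of $(X,D_X)$ gives no such fiberwise bound on lct's of sections of $\omega^m_{X'_y}(mD'_y)$. This is exactly what Lemma \ref{lm:boundlc}, the fiber-product stability Lemma \ref{lm:prode}, and the semicontinuity of Corollary \ref{cor:uniconst} supply: a single constant $C$ with $e_{D^r}\big(\omega^k_{U^r_y}(kD^r_y)\big)<Ck$ for all $r$ and all $y$. Moreover, that same $C$ fixes the exponent $2(C+1)m$ in the twist, so without Corollary \ref{cor:uniconst} one cannot even write down which line bundle to plug into Theorem \ref{thm:Cao}. Incorporating this uniform lct bound, and constructing $\Gamma$ from the determinant as above, would close the gap.
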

\begin{proof}
First, we do stable reduction of $f_V$ as in Proposition \ref{prop:stablered}. Then replacing $f_V$ by a birational model, we obtain a Diagram 
\[\begin{tikzcd}
  (U, D_U) \arrow[r, hook] \arrow[d, "f_V"] & (W, D_W) \arrow[d, "f"] & (W', D')\arrow[l, "\tau_W"] \arrow[d, "g"]\arrow[r,"\sigma"] & (Z,D)\arrow[d, "h"]\\
  V \arrow[r, hook] &Y & Y'\arrow[l, "\tau"] \arrow[r, "="]& Y'
\end{tikzcd}\]

We then fix an ample line bundle $\cA$ on $Y$ so that $\cA(E)$ is also ample and the global sections of $\tau^*\cA\otimes \omega_{Y'}^{-1}$ seperate all $2n$-jets, where $E=Y\setminus V$. 

Since $f_V$ is of maximal variation, by definition so is $h$. By \cite[Theorem 7.1]{KP16}, we know $h_{*} \omega^{m} _{Z/Y'}(mD))$ is big and hence so is $[\det] h_{*} \omega^{m} _{Z/Y'}(mD)$ for all $m$ sufficiently large and divisible, as the determinant of a big sheaf is also big. Here and in what follows, we use $[\bullet]$ to denote the reflexive hull of the algebraic operator $\bullet$. We then fix an $m$ sufficiently large and divisible. By bigness we can assume
\[\big([\det] h_{*} \omega^{m} _{Z/Y'}(mD)\big)^v=\cA'^{2(C+1)m}(E'')\]
for some $v>0$ (depending on $m$ and $C$) and some effective divisor $E''$ on $Y'$, where $\cA'=\tau^*\cA(E)$ and $C$ is the uniform constant (assumed to be an integer $>2$) as in Corollary \ref{cor:uniconst} for the family $g\colon (W',D')\to Y'$ over its log smooth locus $V''$.

We now take $r=vr_0$, where $r_0=\textup{rank}(h_{*} \omega^{m} _{Z/Y'}(mD))$. 
We apply Proposition \ref{prop:stablered} again and obtain the reduction diagram for $f^r_V$
\[\begin{tikzcd}
  (U^r, D^r_U) \arrow[r, hook] \arrow[d, "f^r_V"] & (X, D_X) \arrow[d, "f_X"] & (X', D'_{X})\arrow[l, "\tau_X"] \arrow[d, "g'"]\arrow[r,"\sigma"] & (Z^r,D^r)\arrow[d, "h^r"]\\
  V \arrow[r, hook] &Y & Y'\arrow[l, "\tau"] \arrow[r, "="]& Y'
\end{tikzcd}\]
By construction, we know that the reduction diagram of $f^r_V$ is generically taking $r$-th fiber product of the reduction diagram of $f_V$. Hence, we can assume 
\begin{equation}\label{eq:fprodg}
 g'|_{g'^{-1}(V'\cap V'')}=(g|_{(g)^{-1}(V'\cap V'')})^r,
\end{equation}
where $V'=\tau^{-1}(V)$ and $V''$ is the log smooth locus of $g$.

By Proposition \ref{prop:stablered} (c), for all $l$ sufficiently large and divisible
\[h_{*}\omega^l_{Z/Y'}(lD)\simeq g_*\omega^l_{W'/Y'}(lD')\]
and 
\[h^r_{*}\omega^l_{Z^r/Y'}(lD^r)\simeq g'_*\omega^l_{X'/Y'}(lD'_X),\]
and they are all reflexive sheaves. By flat base change and reflexivity, we know that 
\[g'_*\omega^l_{X'/Y'}(lD'_X)\simeq h^r_{*}\omega^l_{Z^r/Y'}(lD^r)\simeq [\bigotimes^r] h_{*} \omega^l _{Z/Y'}(l D).\]
On the other hand, we have the natural morphism 
\[[\det] h_{*} \omega^{m} _{Z/Y'}(mD) \longrightarrow [\bigotimes^{r_0}] h_{*} \omega^{m} _{Z/Y'}(mD),\] 
splitting locally over $V'$. Hence, we obtain morphisms
\[\cA'^{2(C+1)m}\rarr \big([\det] h_{*} \omega^{m} _{Z/Y'}(mD)\big)^v\rarr g'_{*} \omega^{m} _{X'/Y'}(mD'_{X}),\]
where the composition splits locally over $V'\setminus E''$.  The composition above gives an effective divisor 
$$\Gamma\in |\omega^{m}_{X'/Y'}(mD'_X)\otimes g'^*\cA'^{-2(C+1)m}|$$ 
and it does not contain any fibers over $(V')\setminus E''$. By Corollary \ref{cor:uniconst} and \eqref{eq:fprodg}, we know that
\[(X_y,(\frac{1}{Cm}\Gamma+D')|_{X'_y})\]
is klt for every $y\in V'\cap V''$. For every $l>0$, we apply Theorem \ref{thm:Cao} to the family $g'$ and the line bundle 
\[\omega^{lm}_{X'/Y'}(lmD'_X)\otimes g'^*\cA'^{-2(C+1)lm}\]
with the natural singular metric induced by $l\Gamma$. Since $(X_y,(\frac{1}{Cm}\Gamma+D')|_{X'_y})$ is klt, we conclude that 
\[g'_*\omega^{(C+1)lm}_{X'/Y'}((C+1)lmD'_{X})\otimes\cA'^{-2(C+1)lm}\otimes \tau^*A\]
is generically globally generated. 

By Proposition \ref{prop:stablered} (d), we have a morphism 
\[g'_*\omega^{(C+1)lm}_{X'/Y'}((C+1)lm D'_X)\rarr \tau^*f_{X*}\omega^{(C+1)lm}_{X/Y}((C+1)lm(D_X+f_X^*E))\]
which is isormophic generically. Hence, the sheaf
\[\tau^*f_{X*}\omega^{(C+1)lm}_{X/Y}((C+1)lm(D_X+f_X^*E))\otimes\cA'^{-2(C+1)lm}\otimes \tau^*\cA\]
is also generically globally generated. 
%Recall that $\cA'=\pi^*\cA^2(3E)$. 
Since $\tau$ is finite, we are allowed to apply projection formula for coherent sheaves to obtain a generically surjective morphism 
\[\bigoplus\tau_*\cO_{Y'}\rarr f_{X*}\omega^{(C+1)lm}_{X/Y}((C+1)lmD_X)\otimes\cA^{-2(C+1)lm+1}(-(C+1)lmE)\otimes\tau_*\cO_{Y'}.\]
Since $\cA$ is ample by assumption, we pick $l$ sufficiently large so that 
$$\tau_*\cO_{Y'}\otimes \cA^{(C+1)lm-1}$$ 
is globally generated. Then using the trace map
\[\tau_*\cO_{Y'}\rarr \cO_Y,\]
we conclude that 
\[f_{X*}\omega^{k}_{X/Y}(kD_X)\otimes\cA(E)^{-k}\]
is generically globally generated, where we take $k=(C+1)lm$.
\end{proof}

One can observe that the proof of the above theorem shows more precisely that $f_{X*}\omega^{k}_{X/Y}(kD_X)\otimes\cA(E)^{-k}$ is generically generated by sections belonging to the subspace $\mathbb V_k$ defined as the image of the map 
\[H^0(Y',h^r_{*}\omega^{k}_{Z^r/Y'}(k{D^r})\otimes\tau^*(\cA(2E)^{-k}))\to H^0(Y,f_{X*}\omega^{k}_{X/Y}(kD_X)\otimes\cA(E)^{-k}), \]
induced by the injective morphism in Proposition \ref{prop:stablered} (d). Then one applies the arguments in the proof of \cite[Proposition 4.4]{PTW18} and obtains the following proposition regarding lifting sections in $\mathbb V_m$ through a birational base change. It will be needed in a reduction step for the construction of Higgs sheaves.  

\begin{prop}\label{prop:liftsections}
In the situation of Theorem \ref{thm:ggrefine}, let $S$ be effective divisor with $S\ge \Delta_\tau$, the discriminant divisor of $\tau$, and let $\psi\colon \tilde Y\to Y$ be a log resolution of $E+S$ with centers in the singular locus of  $E+S$. Then for every $s\in \mathbb V_k$, there exist a subset $T\in \tilde Y$ of codimension at least 2, a birational model $\tilde f\colon (\tilde X, D_{\tilde X})\to \tilde Y$ of $f$ with $(\tilde X, D_{\tilde X}+\tilde f^*\psi^*(E))$ log smooth and the coefficients of $D_{\tilde X}$ is in $[0,1)$, and a section 
\[\tilde s\in H^0(\tilde Y_0,\tilde f_*\omega^{k}_{\tilde X/\tilde Y}(kD_{\tilde X})\otimes\psi^*\cA(E)^{-k}) \]
with $\tilde Y_0=\tilde Y\setminus T$ so that 
\[\tilde s|_{\psi^{-1}(V\setminus S)}=\psi^*(s|_{V\setminus S}).\]
\end{prop}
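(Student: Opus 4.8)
The plan is to run the argument of \cite[Proposition~4.4]{PTW18}, carrying the boundary divisor through every step and using crucially that a stable family of slc pairs stays stable under arbitrary base change. First I would trace the given section back to the stable model: by the description of $\mathbb V_k$ above together with Proposition~\ref{prop:stablered}(c)--(d), a section $s\in\mathbb V_k$ is the image, under the composition of the isomorphism $h^r_*\omega^k_{Z^r/Y'}(kD^r)\cong g'_*\omega^k_{X'/Y'}(kD'_X)$, the comparison morphism of Proposition~\ref{prop:stablered}(d), and the trace $\tau_*\cO_{Y'}\to\cO_Y$, of a section
\[\hat s\in H^0\!\big(Y',\,h^r_*\omega^k_{Z^r/Y'}(kD^r)\otimes\tau^*\cA(2E)^{-k}\big)\]
of the twisted relative log-pluricanonical sheaf of the $r$-fold fibre-product stable family $h^r\colon (Z^r,D^r)\to Y'$. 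So it suffices to lift $\hat s$ through a base change covering $\psi$.

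Next I would base change and resolve the ramification. Let $\tilde Y'$ be the normalisation of the main component of $Y'\times_Y\tilde Y$, with induced finite morphism $\tilde\tau\colon\tilde Y'\to\tilde Y$ and $\tilde\psi\colon\tilde Y'\to Y'$. Since $S\ge\Delta_\tau$ and $\Delta_\tau\supseteq E$, the divisor $\psi^{-1}(E+S)$ is simple normal crossing on $\tilde Y$ and contains the branch locus of $\tilde\tau$, so by Abhyankar's lemma $\tilde Y'$ has at worst toroidal singularities; replacing it by a resolution that is an isomorphism over $\tilde\tau^{-1}(\psi^{-1}(V\setminus S))$ we may take $\tilde Y'$ smooth projective. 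Pulling back the stable family gives a stable family $(Z^r_{\tilde Y'},D^r_{\tilde Y'})\to\tilde Y'$ (stability is preserved under base change by the definition of a stable family), and $\hat s$ pulls back to a section $\hat s'$ of the corresponding twisted pluricanonical sheaf on $\tilde Y'$.

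Then I would build the model $\tilde f$ and descend. Take $\tilde X$ to be a log resolution of the main component of $X\times_Y\tilde Y$ dominating $Z^r_{\tilde Y'}$, chosen so that $\tilde f^*\psi^*E+\supp D_{\tilde X}$ is normal crossing, where $D_{\tilde X}$ is obtained from the strict transform of $D_X$ by the ``round down the anti-effective exceptional part'' recipe of Proposition~\ref{prop:stablered}(b); then $(\tilde X,D_{\tilde X}+\tilde f^*\psi^*E)$ is log smooth and the coefficients of $D_{\tilde X}$ lie in $[0,1)$. Over $\tilde Y$ one has the analogue of Proposition~\ref{prop:stablered}(d): a comparison morphism from $\tilde\tau_*$ of the twisted pluricanonical sheaf of $(Z^r_{\tilde Y'},D^r_{\tilde Y'})$ to $\tilde f_*\omega^k_{\tilde X/\tilde Y}(kD_{\tilde X})\otimes\psi^*\cA(E)^{-k}$; pushing $\hat s'$ forward along $\tilde\tau$ and applying this morphism produces a section $\tilde s$, defined a priori only where the comparison is an isomorphism. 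It then remains to bound the locus where it is not. Over $\tilde Y\setminus\Exc(\psi)$ there is nothing to check, while at the generic point of a $\psi$-exceptional prime divisor one localises at the corresponding DVR: because $h^r$ is a \emph{stable} family, its relative log-canonical sheaf commutes with the base change $\tilde Y'\to Y'$, so by Abhyankar's lemma for $\tilde\tau$ over that DVR the comparison morphism is an isomorphism there. Hence the indeterminacy of $\tilde s$ is contained in a closed subset $T\subseteq\tilde Y$ of codimension $\ge2$, and $\tilde Y_0:=\tilde Y\setminus T$ works. Finally, over $\psi^{-1}(V\setminus S)$ the map $\psi$ is an isomorphism, $\tau$ and $\tilde\tau$ are étale, and all the families in sight agree with the given log smooth family, so chasing the identifications yields $\tilde s|_{\psi^{-1}(V\setminus S)}=\psi^*(s|_{V\setminus S})$.

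The hard part will be this last step: the DVR-level analysis along the $\psi$-exceptional divisors. The naive pullback $\psi^*s$ is only a rational section of $\tilde f_*\omega^k_{\tilde X/\tilde Y}(kD_{\tilde X})\otimes\psi^*\cA(E)^{-k}$, and for a general section it acquires genuine poles along $\psi$-exceptional divisors coming from the anti-effective exceptional part of $K_{\tilde X/\tilde Y}$ relative to the pulled-back polarisation. The point of restricting to $\mathbb V_k$ is precisely that such sections descend from the stable model, whose relative log-canonical bundle is rigid under base change, so the only loss is concentrated where the resolution of the ramified cover $\tilde\tau$ fails to be an isomorphism, which is of codimension $\ge2$. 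Making this bookkeeping precise --- while simultaneously keeping the pair $(X,D_X)$, the coefficient condition, and the various twists straight --- is the technical heart of the argument.
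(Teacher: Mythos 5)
Your proposal is correct and follows essentially the same route as the paper, which states the result as an adaptation of the argument of Popa--Taji--Wu (their Proposition~4.4), replacing mild reductions by stable reductions. You correctly identify the two structural ingredients that make the argument go through: tracing a section in $\mathbb V_k$ back to the stable model $h^r$ (so that base change is controlled by the definition of a stable family, whose relative log-canonical sheaf commutes with arbitrary base change), and using $S\ge\Delta_\tau$ together with Abhyankar's lemma to control the ramification of $\tilde\tau$ at the generic points of the $\psi$-exceptional divisors, leaving an indeterminacy locus $T$ of codimension at least two. This is exactly the intended adaptation.
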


\section{Hodge theoretical constructions of Higgs sheaves}\label{sect:higgs}

\subsection*{Construction of Higgs sheaves for boundary $\Q$-divisors}
In this section, we give a construction of Higgs sheaves for families of pairs. We mainly follow the ideas in \cite[\S 2.3 and 2.4]{PS17}. %and \cite[\S 2.3]{PTW18}. 
%The construction in \cite{PTW18} can be treated as the special case when the divisor in the pair is trivial.

We assume that $f\colon X\to Y$ is a surjective projective morphism between smooth projective varieties with connected general fibers branched along a normal crossing divisor $D_f$. We also assume that  $D$ is an effective normal crossing divisor with coefficients in $(0,1)$ on $X$ and $\cA$ is a line bundle on $Y$. We make the following assumption
\begin{equation}\label{as:sections}
0\neq H^0(X, \omega^m_{X/Y}(mD)\otimes f^*\cA^{-m}),
\end{equation}
for some $m$ sufficiently divisible. We set $\cL=\omega_{X/Y}(\ceil D)\otimes f^*\cA^{-1}$. Then $\cL^m$ has a section $s$ with its zero divisor $(s)$ containing the support of $D$.

We take the $m$-th cyclic covering of the section $s$, $\pi\colon X_m\to X$. Let $\nu\colon Z \to X_m$ be a desinularization of the normalization of $X_m$. Set $\phi=\pi\circ\nu$ and $h=f\circ\phi$. By the construction of cyclic coverings, $\phi^*\frac{1}{m}(s)$ is an effective divisor on $Z$ that containing $D_Z$, the support of 
$\phi^*(D)$. Replacing $Z$ by a further resolution, we can assume that $D_Z$ is normal crossing. We then have an induced inclusion 
\[
\phi^*\cL^{-1}\hookrightarrow \cO_Z(-D_Z).
\]
After composing with $\phi^*\Omega^{k}_X(\log D)\to \Omega^k_Z(\log D_Z)$, we hence obtain injective morphisms 
\begin{equation}\label{eq:tautoeq}\phi^*\big(\cL^{-1}\otimes\Omega^{k}_X(\log D)\big)\hookrightarrow\Omega^k_Z(\log D_Z)(-D_Z)\hookrightarrow \Omega^k_Z
\end{equation}
for $k=0,1,\dots,n$.

Consider the log Spencer complex relative over $Y$ 
\[C_{(X,D),\bullet}\coloneqq[f^*\sA_Y^{\bullet-n}\otimes\bigwedge^n\shTA_X(-\log D)\to f^*\sA_Y^{\bullet-n+1}\otimes\bigwedge^{n-1}\shTA_X(-\log D)\to\cdots \to f^*\sA_Y^{\bullet}],\]
starting from the $-n$ term and similarly \[C_{Z,\bullet}\coloneqq[f^*\sA_Y^{\bullet-n}\otimes\bigwedge^n\shTA_Z\to f^*\sA_Y^{\bullet-n+1}\otimes\bigwedge^{n-1}\shTA_Z\to\cdots \to f^*\sA_Y^{\bullet}].\]

\begin{lemma}
The inclusion \eqref{eq:tautoeq} induces a none-trivial morphism of complexes of graded $\sA_Y^\bullet$-modules
\[{\bf R}f_*\big( \cL^{-1}\otimes\omega_{X/Y}(\ceil D)\otimes C_{(X,D),\bullet}\big)\to{\bf R}h_*\big(\omega_{Z/Y}\otimes C_{Z, \bullet}\big). \]
\end{lemma}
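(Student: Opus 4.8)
The plan is to build the morphism of complexes summand by summand in the graded pieces, reducing everything to a natural map on each wedge power of the log tangent sheaf paired with the relevant line bundle. The key observation is that, relative to $Y$, the $p$-th term of $C_{(X,D),\bullet}$ (up to the twist by $f^*\sA_Y^\bullet$) is $\bigwedge^{n-p}\shTA_X(-\log D)$, and by the standard duality $\bigwedge^{n-p}\shTA_X(-\log D)\simeq \Omega^p_X(\log D)\otimes\omega_{X/Y}^{-1}\otimes(\det\shTA_X(-\log D)\otimes\omega_X)$ — more precisely, since $\omega_X(D_{\mathrm{red}})\simeq\det\Omega^1_X(\log D_{\mathrm{red}})$ and $\ceil D$ has the same support as $D$, tensoring the whole relative Spencer complex by $\cL^{-1}\otimes\omega_{X/Y}(\ceil D)$ turns the $p$-th term into $\cL^{-1}\otimes\Omega^p_X(\log D)\otimes(\text{contributions from }f^*\sA_Y^\bullet\text{ and the fibre normal bundle})$. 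So the first step is to rewrite both tensored complexes $\cL^{-1}\otimes\omega_{X/Y}(\ceil D)\otimes C_{(X,D),\bullet}$ and $\omega_{Z/Y}\otimes C_{Z,\bullet}$ in the form where each graded term is visibly of the shape $f^*\sA_Y^\bullet$-module tensored with a logarithmic (resp.\ ordinary) differential-form bundle on $X$ (resp.\ $Z$).

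Second, on each term I would plug in the injections \eqref{eq:tautoeq}: these give $\phi^*(\cL^{-1}\otimes\Omega^k_X(\log D))\hookrightarrow\Omega^k_Z$ for every $k$. Pulling back via $\phi$ and using $\phi^*f^*=h^*$, one gets for each $p$ a morphism $\phi^*$ of the $p$-th term of the first tensored complex into the $p$-th term of the second, compatible with the $f^*\sA_Y^\bullet$-module structure (the module structure is pulled back along $\phi$ over $Y$, so compatibility is automatic). One must then check this collection of term-wise maps commutes with the Spencer differentials; the differentials are the usual Koszul-type contractions built from the bracket of vector fields, and the maps \eqref{eq:tautoeq} arise from a genuine morphism of sheaves of differential graded structures (pullback of forms is a map of de Rham complexes, and the cyclic-cover inclusion $\phi^*\cL^{-1}\hookrightarrow\cO_Z(-D_Z)\hookrightarrow\cO_Z$ is $\cO$-linear and respects the log structure by construction of $D_Z$), so compatibility with the differential should follow formally. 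Applying $\mathbf{R}f_*$ to the source and $\mathbf{R}h_* = \mathbf{R}f_*\circ\mathbf{R}\phi_*$ to the target, together with the adjunction unit $\mathrm{id}\to\mathbf{R}\phi_*\phi^*$ (or simply the natural map $\mathbf{R}f_*(-)\to\mathbf{R}f_*\mathbf{R}\phi_*\phi^*(-)=\mathbf{R}h_*\phi^*(-)$), produces the desired morphism ${\bf R}f_*(\cdots)\to{\bf R}h_*(\cdots)$ of complexes of graded $\sA_Y^\bullet$-modules.

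Third, non-triviality: this is where the hypothesis \eqref{as:sections} is used. At the bottom degree $p=0$ the source term is (a twist of) $\cL^{-1}\otimes\omega_{X/Y}(\ceil D)\otimes f^*\sA_Y^{\bullet}$, and using $\cL=\omega_{X/Y}(\ceil D)\otimes f^*\cA^{-1}$ one identifies the relevant piece with $f^*(\cA\otimes\sA_Y^\bullet)$-type sheaf whose pushforward receives the section $s$ coming from \eqref{as:sections}; under the cyclic-cover construction $\phi^*$ of this section is, by design, a nonzero section of the corresponding piece of $\omega_{Z/Y}\otimes C_{Z,\bullet}$ (indeed $\phi^*\frac1m(s)$ is the effective divisor that splits the covering). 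Tracking this section through the term-wise map shows the morphism of complexes is nonzero. The main obstacle I anticipate is not any single step but the bookkeeping in the second step: verifying that the term-wise maps genuinely commute with the Spencer differentials once the twists by $\cL^{-1}\otimes\omega_{X/Y}(\ceil D)$ and $\omega_{Z/Y}$ are inserted, i.e.\ checking that \eqref{eq:tautoeq} is compatible with the contraction maps and the $\omega_{X/Y}$- versus $\omega_{Z/Y}$-normalizations — this is the pair/log analogue of the corresponding verification in \cite[\S2.3--2.4]{PS17}, and the argument there adapts, but one has to be careful that $\ceil D$ (not $D$) appears in $\cL$ while the log poles are along $\supp D=\supp\ceil D$, which is exactly what makes the inclusion $\phi^*\cL^{-1}\hookrightarrow\cO_Z(-D_Z)$ land in the right place.
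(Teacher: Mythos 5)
Your proposal matches the paper's proof: both reduce by adjunction to constructing a term-wise morphism $\phi^*\bigl(\cL^{-1}\otimes\omega_{X/Y}(\ceil D)\otimes C_{(X,D),\bullet}\bigr)\to\omega_{Z/Y}\otimes C_{Z,\bullet}$, rewrite each term via the duality between $\bigwedge^k\shTA_X(-\log D)$ and $\Omega^{n-k}_X(\log D)$ so that the inclusions \eqref{eq:tautoeq} apply, and leave the compatibility with the Spencer differentials as a routine check. Your non-triviality argument via the section $s$ is also sound and in fact slightly more explicit than the paper's, which only asserts it.
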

\begin{proof}
By adjunction, it is enough to construct a morphism 
\[\phi^*\big(\cL^{-1}\otimes\omega_{X/Y}(\ceil D)\otimes C_{(X,D),\bullet}\big)\to \omega_{Z/Y}\otimes C_{Z, \bullet}.\]
Using the dual pair $\Omega_X(\log D)$ and $\shTA_X(-\log D)$, we have a natural isomorphism 
\[\cL^{-1}\otimes\omega_{X/Y}(\ceil D)\otimes C_{X,-k}\simeq \cL^{-1}\otimes f^*\omega^{-1}_{Y}\otimes \Omega^{n-k}_X(\log D),\]
and hence a natural isomorphism 
\[\phi^*\big(\cL^{-1}\otimes\omega_{X/Y}(\ceil D)\otimes C_{X,-k}\big)\simeq \phi^*\cL^{-1}\otimes h^*\omega^{-1}_{Y}\otimes \phi^*\Omega^{n-k}_X(\log D).\]
Using the composition in \eqref{eq:tautoeq}, we thus obtain morphisms 
\[\phi^*\big(\cL^{-1}\otimes\omega_{X/Y}(\ceil D)\otimes C_{(X,D),-k}\big)\to \omega_{Z/Y}\otimes C_{Z, -k}\]
for $k=1,2,\dots,n$. Checking the individual morphisms compatible to the differentials of the complexes are strait-forward and skipped. 
\end{proof}

Applying \cite[Proposition 2.4]{PS17}, we have 
\[\gr^F_\bullet \cH^0h_+\Q^H_Z[n]\simeq {\bf R}^0h_*\big(\omega_{Z/Y}\otimes C_{Z, \bullet}\big).\]
We set $\cM$ to be the Hodge module with the support $X$ in the strict-support decomposition of $\cH^0h_+\Q^H_Z[n]$.

We then define $\cG_\bullet$ to be the graded $\sA_Y^\bullet$-module as the image of the composition
\[{\bf R}^0f_*\big( \cL^{-1}\otimes\omega_{X/Y}(\ceil D)\otimes C_{(X,D),\bullet}\big)\rarr{\bf R}^0h_*\big(\omega_{Z/Y}\otimes C_{Z, \bullet}\big)\rarr \gr^F_\bullet\cM.\]

For simplicity, we set $\widetilde\cG^i_\bullet={\bf R}^if_*\big( \cL^{-1}\otimes\omega_{X/Y}(\ceil D)\otimes C_{(X,D),\bullet}\big)$ for all integers $i$. By definition, $\widetilde\cG^i_\bullet$ is not necessarily a coherent $\cO_Y$-module, if the grading structure is forgotten. However, it is coherent over the locus where the family of pairs is log smooth as showing in the following lemma. It is a replacement of \cite[Proposition 2.10]{PS17} in the pair case, which will be used in proving Proposition \ref{prop:constructionHiggs}. 
\begin{lemma}\label{lm:coherence}
With notations as above, we have 
\[\widetilde\cG^i_m|_{U_s}=0\]
for every $i$ and every $m\gg0$, where $U_s \subset Y$ the Zariski open subset over which the morhphism $f: (X, D)\to Y$ is log smooth. In particular, 
\[\cG_m|_{U_s}=0\]
for every $m\gg0$.
\end{lemma}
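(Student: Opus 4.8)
The statement asserts the vanishing of the higher direct images $\widetilde\cG^i_m = {\bf R}^if_*\big(\cL^{-1}\otimes\omega_{X/Y}(\ceil D)\otimes C_{(X,D),\bullet}\big)$ in each fixed degree $m\gg 0$ when restricted to the log smooth locus $U_s$, and then deduces the same for $\cG_m$. The plan is to work locally over $U_s$ and unravel the log Spencer complex degree by degree. Over $U_s$ the family $f\colon(X,D)\to Y$ is log smooth, so $\shTA_X(-\log D)$ is locally free, the relative log tangent sheaf $\shTA_{X/Y}(-\log D)$ is a locally free quotient, and $\Omega^\bullet_{X/Y}(\log D)$ is the relative log de Rham complex. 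Fixing the degree $m$, the complex $\cL^{-1}\otimes\omega_{X/Y}(\ceil D)\otimes C_{(X,D),m}$ is a bounded complex of locally free sheaves; using the duality $\bigwedge^{n-k}\shTA_X(-\log D)\otimes\omega_{X/Y}(\ceil D)\simeq f^*\omega_Y^{-1}\otimes\Omega^{k}_X(\log D)$ together with the $f^*\sA_Y^{\bullet}$-grading, one identifies it, after tensoring with $f^*\omega_Y$, with (a twist of) the relative log de Rham complex $\Omega^\bullet_{X/Y}(\log D)\otimes\cL^{-1}$ — more precisely with the truncation/graded piece of it cut out by the symmetric power index $m-$ (number of relative differentials). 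This is exactly the shape of \cite[Proposition 2.10]{PS17}, and I would mirror that argument.

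The key step is then a cohomology-and-base-change plus vanishing argument fiberwise. For $y\in U_s$, the restriction of the degree-$m$ piece of the complex to the fiber $(X_y,D_y)$ is a twist of $\Omega^\bullet_{X_y}(\log D_y)\otimes\cL^{-1}|_{X_y}$, and $\cL^{-1}|_{X_y} = \omega_{X_y}^{-1}(-\ceil{D_y})\otimes(f^*\cA|_{X_y})$. Since $\cA$ is a line bundle on $Y$, $f^*\cA|_{X_y}$ is trivial, so the relevant sheaf is $\omega_{X_y}^{-1}(-\ceil{D_y})\otimes\Omega^p_{X_y}(\log D_y)$ for various $p$; by log Serre duality this is dual to $\omega_{X_y}^{\otimes 2}(\ceil{D_y})\otimes\Omega^{n-p}_{X_y}(\log D_y)^\vee$, and the point is that for $m\gg 0$ the relevant symmetric-power index forces $p$ into a range where Kodaira–Akizuki–Nakano-type vanishing (or simply the fact that $\Omega^p_{X_y}(\log D_y)\otimes\omega_{X_y}^{-1}$ has no higher cohomology in the needed degrees, following the Esnault–Viehweg log vanishing used in \cite{PS17}) kills all cohomology. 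Since the fiberwise cohomology vanishes for all $y\in U_s$ and the degree-$m$ piece of the complex is a perfect complex, cohomology-and-base-change gives $\widetilde\cG^i_m|_{U_s}=0$ for every $i$. I expect the bookkeeping of exactly which graded pieces survive for a given $m$ — i.e. translating the single integer $m$ into the set of $(i,p)$ that the hypercohomology spectral sequence of $C_{(X,D),\bullet}$ can contribute — to be the main technical obstacle, just as in \cite[\S 2]{PS17}; everything else is formal.

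Finally, the statement for $\cG_m$ follows immediately: by construction $\cG_\bullet$ is the image of a morphism of graded $\sA_Y^\bullet$-modules whose source in degree $m$ is ${\bf R}^0f_*\big(\cL^{-1}\otimes\omega_{X/Y}(\ceil D)\otimes C_{(X,D),m}\big) = \widetilde\cG^0_m$. Restricting to $U_s$, the source vanishes for $m\gg 0$ by the first part (the case $i=0$), hence so does its image $\cG_m|_{U_s}$. This completes the plan.
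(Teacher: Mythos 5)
Your proposed mechanism for the vanishing --- fiberwise Kodaira--Akizuki--Nakano (or Esnault--Viehweg) vanishing plus cohomology and base change --- is not what is going on, and I do not believe it can be made to work as stated. Two concrete problems. First, the restriction of the degree-$m$ piece of $C_{(X,D),\bullet}$ to a fiber $X_y$ is \emph{not} a twist of $\Omega^\bullet_{X_y}(\log D_y)\otimes\cL^{-1}|_{X_y}$: the terms are $\sA_Y^{m-k}|_y\otimes\bigwedge^k\shTA_X(-\log D)|_{X_y}$, and $\shTA_X(-\log D)|_{X_y}$ sits in an extension of $\cO_{X_y}^{\dim Y}$ by $\shTA_{X_y}(-\log D_y)$ --- it is not the fiberwise log tangent sheaf. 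Second, and more importantly, for $m\ge n$ the complex $C_{(X,D),m}$ has all terms in cohomological degrees $-n,\dots,0$ present; there is no mechanism by which increasing $m$ ``forces $p$ into a range'' where some vanishing theorem applies, nor should there be, since the conclusion $\widetilde\cG^i_m|_{U_s}=0$ must hold for every $i$ and is independent of any positivity of $\cL$ or $\cA$. Kodaira-type vanishing is simply not the reason.

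The actual argument in the paper is structural and works at the level of graded complexes, not fiberwise cohomology. One first replaces $C_{(X,D),\bullet}$ by $\gr^F_\bullet\cO_X(*D)\stackrel{\mathbf L}{\otimes}_{\sA^\bullet_X}f^*\sA_Y^\bullet$ via the filtered log-comparison of \cite[Proposition 3.1]{MP16}. Over $V_s=f^{-1}(U_s)$, since $f$ is smooth, $f^*\sA_Y^\bullet$ has a finite locally free Koszul (relative Spencer) resolution by $\sA_X^{\bullet-i}\otimes\bigwedge^i\shTA_{X/Y}$. Substituting and then invoking the argument of \cite[Proposition 2.3]{PTW18} (using that $D$ is relatively normal crossing over $U_s$), the complex $\cL^{-1}\otimes\omega_{X/Y}(\ceil D)\otimes C_{(X,D),\bullet}$ becomes graded-quasi-isomorphic over $V_s$ to the \emph{bounded} relative log Spencer complex $f^*\cA\otimes\bigl[\bigwedge^d\shTA_{X/Y}(-\log D)\to\cdots\to\cO_X\bigr]$. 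The upshot is that $\widetilde\cG^i_\bullet|_{U_s}$ is a coherent $\cO_{U_s}$-module once the grading is forgotten; a coherent sheaf cannot have infinitely many nonzero graded pieces, so $\widetilde\cG^i_m|_{U_s}=0$ for $m\gg0$, and the statement for $\cG_m$ follows as you say. The missing idea in your proposal is this passage through the relative Spencer resolution and the resulting \emph{graded} acyclicity/coherence; nothing in your plan produces it.
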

\begin{proof}
Applying \cite[Proposition 3.1]{MP16}(see also \cite{W17a} and \cite{Wu18}  for further discussion of more general results), we have a quasi-isomorphism
\[C_{(X,D),\bullet}\simeq \gr^F_\bullet\cO_X(*D)\stackrel{{\bf L}}{\otimes}_{\sA^\bullet_X}f^*\sA^\bullet_Y.\]
Here $(\cO_X(*D),F_\bullet)$ is the filtered $\sD_X$-module with $$\cO_X(*D)=\lim_{k\to \infty}\sO_X(k\ceil D)$$
and $F_p\cO_X(*D)=\sO_X((p+1)\ceil D)$ if $p\ge0$ and $0$ otherwise.

Then, we set $V_s=f^{-1}U_s$ and $d=\dim X-\dim Y$. Over $V_s$ (or more generally over the smooth locus of $f$), $f^*\sA_Y^\bullet$ has a locally free (finite) resolution (the relative Spencer resolution)
\[
[\sA_X^{\bullet-d}\otimes \stackrel{d}{\bigwedge}\shTA_{X/Y}\cdots\to\sA_X^{\bullet-i}\otimes \stackrel{i}{\bigwedge}\shTA_{X/Y}\to\cdots\to  \sA_X^{\bullet}]\to f^*\sA_Y^\bullet.
\]
where $\shTA_{X/Y}$ is the relative tagent sheaf.
To see this, one observes that the graded complexe is the Koszul complex of $\sA_X^{\bullet}$ with actions of multiplications by $\partial_{x_1},\dots,\partial_{x_{d}}$, when $\partial_{x_1},\dots,\partial_{x_{d}}$ are free generators of $\shTA_{X/Y}$ locally. 

Using the above resolution, we get a quasi-isomorphism 
\[\cL^{-1}\otimes\omega_{X/Y}(\ceil D)\otimes C_{(X,D),\bullet}\simeq f^*\cA\otimes[\gr^F_{\bullet-d}\cO_X(*D)\otimes \stackrel{d}{\bigwedge}\shTA_{X/Y}\to\cdots\to  \gr^F_{\bullet}\cO_X(*D)].\]
Since $D$ is relative normal crossing over $U_s$, using the argument in the proof of \cite[Proposition 2.3]{PTW18}, the complex 
\[\gr^F_{\bullet-d}\cO_X(*D)\otimes \stackrel{d}{\bigwedge}\shTA_{X/Y}\to\cdots\to  \gr^F_{\bullet}\cO_X(*D)\]
is quasi-isomorphic to the complex
\[\stackrel{d}{\bigwedge}\shTA_{X/Y}(-\log D)\to\cdots \to \shTA_{X/Y}(-\log D) \to  \cO_X\]
over $V_s$, where $\shTA_{X/Y}(-\log D)$ is the relative tagent sheaf with logarithmic zeros along D. Therefore $\cL^{-1}\otimes\omega_{X/Y}(\ceil D)\otimes C_{(X,D),\bullet}$ is quasi-isomorphic to 
\[f^*\cA\otimes\stackrel{d}{\bigwedge}\shTA_{X/Y}(-\log D)\to\cdots \to f^*\cA\otimes\shTA_{X/Y}(-\log D) \to f^*\cA\]
over $V_s$.
This means that $\widetilde\cG^i_\bullet|_{U_s}$ is a coherent $\cO_{U_s}$-module when the grading is forgotten. Therefore, we obtain
\[\widetilde\cG^i_m|_{U_s}=0\]
for every $m\gg0$.
\end{proof}

The following proposition regarding the construction of Higgs sheaves in codimension one for log smooth families is a natural generalization of \cite[Theorem 2.3]{PS17}. 
\begin{prop}\label{prop:constructionHiggs}
Let $f\colon X\rarr Y$ be a surjective projective morphism between smooth projective varieties with connected general fibers. Assume that $D$ is an effective normal crossing $\Q$-divisor with coefficients in $[0,1)$ on $X$ and $D_f$ is a reduced normal crossing divisor on $Y$ so that $(X, D)$ is log smooth over $Y\setminus D_f$ and $\cA$ a line bundle on $Y$. Assuming \eqref{as:sections}, one can find a graded $\sA^\bullet_Y(-\log D_f)$-module $\sF_\bullet$ satisfying the following properties:
\begin{enumerate}
\item $\sF_0$ is a line bundle, satisfying $\cA(-D_f)\subseteq \sF_{0}$.
\item $\sF_m$ is reflexive for $m>0$.
%\item We have an inclusion of $\sA^\bullet_Y(-\log D_f)$-modules
%\[\cG_\bullet\subseteq \gr^F_\bullet\cM,\]
%where $\cM$ is a Hodge module strictly supported on $X$.
\item There exist a reduced divisor $E\supseteq D_f$, and a polarizable VHS defined over $Y\setminus E$ so that 
\[\sF_\bullet|_{Y\setminus \Lambda}\subseteq\sE_\bullet\]
as graded $\sA^\bullet_{Y\setminus \Lambda}(-\log E|_{Y\setminus \Lambda})$-modules, where $\sE_\bullet$ is the logarithmic Higgs bundle associated to the canonical extension of the VHS with eigenvalues in $[0,1)$ along $E|_{Y\setminus Z}$, where $\Lambda$ is the locus where $E$ is not normal crossing.
\end{enumerate}
\end{prop}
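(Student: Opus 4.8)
The plan is to follow \cite[Theorem~2.3]{PS17} and its logarithmic refinement in \cite{PTW18}, feeding in the graded module $\cG_\bullet$ and the pure Hodge module $\cM$ produced above; the only genuinely new bookkeeping concerns the boundary $D$ on $X$. First note that $\cL^{-1}\otimes\omega_{X/Y}(\ceil D)=f^*\cA$, and that in graded degree $0$ the relative logarithmic Spencer complex $C_{(X,D),0}$ is just $\cO_X$ (placed in cohomological degree $0$); hence $\widetilde\cG^0_0\simeq{\bf R}^0f_*f^*\cA\simeq\cA$ (using $f_*\cO_X=\cO_Y$), and $\cG_0$ is the image of a map $\cA\to\gr^F_0\cM$ which is nonzero precisely because of \eqref{as:sections}. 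Since $\cM$ has strict support $Y$, $\gr^F_0\cM$ is torsion-free, so $\cG_0$ is a rank-one torsion-free sheaf, generically equal to $\cA$; I would then set $\sF_m\coloneqq\cG_m^{\vee\vee}$ (double dual over $\cO_Y$) for every $m\ge0$. This gives (2) at once, and $\sF_0$ is a line bundle, being a rank-one reflexive subsheaf of the invertible sheaf $\cA^{\vee\vee}=\cA$. The inclusion $\cA(-D_f)\subseteq\sF_0$ of (1) follows by tracking the degree-zero map through the cyclic cover $\phi\colon Z\to X$: over $Y\setminus D_f$ the family $(X,D)\to Y$ is log smooth and this map is an isomorphism onto the relevant Hodge piece, so any defect of $\cG_0$ inside $\cA$ is supported on $D_f$, and a local computation near the generic points of $D_f$ (where the cyclic cover ramifies over the fibres) shows it is bounded by $D_f$.

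For the module structure: because $(X,D)\to Y$ is log smooth over $Y\setminus D_f$, the relative logarithmic Spencer complex $C_{(X,D),\bullet}$ endows ${\bf R}^0f_*\big(\cL^{-1}\otimes\omega_{X/Y}(\ceil D)\otimes C_{(X,D),\bullet}\big)$, and hence its image $\cG_\bullet$, with the natural structure of a graded $\sA^\bullet_Y(-\log D_f)$-module (the relative logarithmic connection along $D_f$ passing to a logarithmic Higgs field on the associated graded). Since $\sA^\bullet_Y(-\log D_f)$ is locally free over $\cO_Y$ and $Y$ is normal, a morphism from a coherent sheaf into a reflexive sheaf extends uniquely across any closed subset of codimension $\ge2$; applying this degree by degree, the $\sA^\bullet_Y(-\log D_f)$-action on $\cG_\bullet$ extends uniquely to the reflexive hulls, so $\sF_\bullet$ is a graded $\sA^\bullet_Y(-\log D_f)$-module.

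For (3): the module $\cM$, being the strict-support-$Y$ summand of $\cH^0h_+\Q^H_Z[n]$, is the minimal (intermediate) extension of a polarizable VHS $\V$ defined on some Zariski open $Y\setminus E_0$ over which the fibres of $h$ are smooth. Choose a reduced divisor $E\supseteq D_f\cup E_0$ large enough that the monodromy of $\V$ is trivial off $E$, and let $\Lambda$ be the locus where $E$ fails to be normal crossing; this is a closed subset of codimension $\ge2$, so restricting reflexive sheaves to $Y\setminus\Lambda$ and pushing back changes nothing. Over $Y\setminus\Lambda$ the divisor $E$ is normal crossing, and the key input is the comparison between the Hodge filtration of a minimal extension and the Hodge filtration of the appropriate Deligne extension along a normal crossing divisor (as in \cite{PS17}, ultimately via Schmid's nilpotent orbit theorem and Saito's theory): there is an inclusion of graded $\sA^\bullet_{Y\setminus\Lambda}(-\log E)$-modules $\gr^F_\bullet\cM|_{Y\setminus\Lambda}\hookrightarrow\sE_\bullet$, where $\sE_\bullet$ is the logarithmic Higgs bundle of the Deligne canonical extension of $\V$ with residue eigenvalues in $[0,1)$ along $E|_{Y\setminus\Lambda}$. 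Composing with $\cG_\bullet\subseteq\gr^F_\bullet\cM$ and taking reflexive hulls inside the locally free (hence reflexive) $\sE_\bullet$ gives $\sF_\bullet|_{Y\setminus\Lambda}\subseteq\sE_\bullet$ as graded $\sA^\bullet_{Y\setminus\Lambda}(-\log E)$-modules (on $Y\setminus\Lambda$ the $\sA^\bullet_Y(-\log D_f)$-structure restricts to an $\sA^\bullet(-\log E)$-structure since $D_f\subseteq E$), which is exactly (3).

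The routine parts are the projection-formula identification and the degree-zero computation, the extension of module structures to reflexive hulls over the normal base $Y$, and the bookkeeping of $E$ and $\Lambda$. The delicate point, and the one I would treat most carefully, is the Hodge-theoretic comparison $\gr^F_\bullet\cM\hookrightarrow\sE_\bullet$ along the normal crossing divisor $E$ with the precise eigenvalue normalization $[0,1)$, together with the verification that the two logarithmic structures in play — the one along $D_f$ coming from the pair $(X,D)$ (which makes $\cG_\bullet$ an $\sA^\bullet_Y(-\log D_f)$-module) and the one along $E$ coming from the degeneration of $\V$ — are compatible on $Y\setminus\Lambda$; concretely this amounts to checking that the morphism of relative Spencer complexes constructed above is a morphism of $\sA^\bullet_Y(-\log D_f)$-complexes landing correctly in the Deligne extension. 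This is precisely where the pair hypotheses are essential: the coefficients of $D$ lying in $[0,1)$ and the choice of the $m$-th cyclic cover adapted to the section $s$, whose zero divisor contains $\supp D$, are what force the inclusion $\phi^*\cL^{-1}\hookrightarrow\cO_Z(-D_Z)$ of \eqref{eq:tautoeq} and hence keep the whole comparison nonzero.
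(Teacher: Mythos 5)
There is a genuine gap, concentrated in how you define $\sF_\bullet$. You set $\sF_m\coloneqq\cG_m^{\vee\vee}$ and then derive the containment $\sF_\bullet|_{Y\setminus\Lambda}\subseteq\sE_\bullet$ from a claimed inclusion $\gr^F_\bullet\cM|_{Y\setminus\Lambda}\hookrightarrow\sE_\bullet$, which you attribute to the general comparison between minimal extensions and Deligne extensions. That inclusion is not available: the Hodge filtration of the intermediate extension $\cM$ is built up by applying vector fields across the boundary (Saito's formula $F_p\cM=\sum_{j\ge0}\partial^j\bigl(V_{>-1}\cM\cap j_*F_{p-j}\cV\bigr)$ and its minimal-extension variant), so $F_p\cM$ is not contained in any canonical extension of $\cV$ and the graded pieces $\gr^F_p\cM$ can in fact acquire torsion supported on $E$, hence cannot embed into the locally free $\sE_p$. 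The paper's construction is different precisely for this reason: one defines $\sF_k$ to be the reflexive hull of $\cG_k|_{Y\setminus\Lambda}\cap\sE_k$, where the intersection is taken inside the common meromorphic extension $\gr^F_\bullet\cM|_{Y\setminus\Lambda}$. With your definition the containment $\sF_\bullet\subseteq\sE_\bullet$ — which is the whole point of property (3) and what the proof of Theorem~\ref{thm:main}(2) uses — is simply not secured.

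This also affects items (1) and (2) of the statement. With the intersected definition, $\sF_0$ may be a proper subsheaf of $\cA\otimes f_*\cO_X$, and the assertion $\cA(-D_f)\subseteq\sF_0$ requires the local computation near the generic points of $E$ comparing the image of the tautological section in the Deligne extension with $\cA$, i.e.\ the argument of \cite[Proposition 2.15]{PS17}; your version, where $\sF_0=\cG_0^{\vee\vee}\cong\cA$ whenever the degree-zero map is injective, makes the inclusion trivial but does not produce an $\sF_0$ compatible with (3). Similarly, $\cG_\bullet$ as constructed is only a graded $\sA_Y^\bullet$-module (the Spencer complex is tensored over $f^*\sA_Y^\bullet$, not $f^*\sA_Y^\bullet(-\log D_f)$); the statement that the Higgs field on $\sF_\bullet$ has logarithmic poles only along $D_f$, rather than along all of $E$ where it sits inside $\sE_\bullet$, is the nontrivial content of \cite[Proposition 2.14]{PS17} and uses Lemma~\ref{lm:coherence} ($\cG_m|_{U_s}=0$ for $m\gg0$) in an essential way; it does not follow from the formal observation that a relative log connection along $D_f$ passes to the associated graded.
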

\begin{proof}
By the assumption \eqref{as:sections}, we obtain the Higgs sheaf $\cG_\bullet$ and the Hodge module $\cM$. By construction, we know $\cM$ is the minimal extension of a geometric VHS defined over an open dense subset of $Y$. After possibly shrinking the open set, we are allowed to assume its complement is a divisor. We then set $E$ to be the union of $D_f$ and the divisor and set $\Lambda$ to be the locuse where $E$ is not normal crossing ($\Lambda$ is of codimension at least 2). We also take $\sE_\bullet$ to be the logarithmic Higgs bundle associated to the canonical extension of the VHS with eigenvalues in $[0,1)$ along $E|_{Y\setminus \Lambda}$.

Using the arguments in \cite[Proposition 2.9]{PS17}, we have 
\begin{equation}\label{eq:inc111}
\cG_0=\cA\otimes f_*\cO_X\subseteq \gr^F_0 \cM.
\end{equation}
Therefore, $\cG_0$ is a rank 1 torsion free sheaf as the general fibers of $f$ are connected, and $\cA\subseteq \cG_{0}$.

%By Lemma \ref{lm:coherence}, we know $\cG_m$ is torsion and supported on $D_f$ for $m\gg0$.
%Since $\cG_\bullet$ is defined to be the image of $\widetilde\cG^0_\bullet\to \gr^F_\bullet\cM$, we have an inclusion 
%\[\cG_\bullet\subseteq \gr^F_\bullet\cM,\]
%as $\sA^\bullet_Y(-\log D_f)$-modules.
We now define for each $k$, $\sF_k$ to be the reflexive hull of the intersection 
\[\cG_k|_{Y\setminus \Lambda}\cap \sE_k,\]
where the intersection happens inside $\gr^F_\bullet \cM|_{Y\setminus \Lambda}$. Since $\Lambda$ is of codimension at least 2, $\sF_k$ is a reflexive sheaf on $Y$ for each $k$. In particular, 
\[\sF_\bullet|_{Y\setminus \Lambda}\subseteq \sE_\bullet\]
as $\sA^\bullet_{Y\setminus \Lambda}\big(-\log (E\setminus \Lambda)\big)$-modules.

Furthermore, since $\cG_m|_{U_s}=0$ for all $m\gg0$ by Lemma \ref{lm:coherence}, one can use the arguments in the proof of \cite[Proposition 2.14]{PS17} and conclude that the Higgs field of $\sF_\bullet$ (induced from the Higgs field of $\sE_\bullet$ with logarithmic poles along $E\setminus \Lambda$) only have poles along $D_f$, that is $\sF_\bullet$ is a graded $\sA^\bullet_Y(-\log D_f)$-module. Meanwhile, applying the arguments in the proof of \cite[Proposition 2.15]{PS17}, we also obtain that $\sF_0$ is a line bundle and $\cA(-D_f)\subseteq \sF_{0}$ with the help of \eqref{eq:inc111}.
\end{proof}

Note that the Higgs bundle $\sE_\bullet$ constructed in the previous proposition only support on the base space outside of a codimension at least two subset. Now we are ready to extend it to the whole base space, up to a birational base change, as stated in Theorem \ref{thm:refineHiggs}.

\begin{proof}[Proof of Theorem \ref{thm:refineHiggs}]
We apply Theorem \ref{thm:ggrefine}, and then after replacing $V$ by a birational model, we obtain a compactification $Y$ of $V$ together with a family $f\colon(X,D)\to Y$ and an ample line bundle $\cL$ on $Y$ so that 
\begin{enumerate} [label=(\alph*$'$)]
    \item $E=Y\setminus V$ is simple normal crossing;
    \item the coefficients of $D$ is in $[0,1)$ and $(X,D)$ is log smooth over $V$;
    \item $f^*E+D$ is normal crossing;
    \item $f_*(\omega^m_{X/Y}(mD)\otimes \cL(E)^{-m})$ is generically globally generated for all $m$ sufficiently large and divisible. 
 \end{enumerate}
By (d$'$), the assumption \eqref{as:sections} is satisfied. We then choose a  section 
\[0\neq s\in H^0(X, \omega^m_{X/Y}(mD)\otimes f^*\cL(E)^{-m})).\]
%and take $\Delta=D_s+\lceil D\rceil-D$. 
Applying Proposition \ref{prop:constructionHiggs} for the family $f$ and the line bundle $\cL(E)$, one obtains a graded $\sA^\bullet_Y(-\log D)$-module $\sF_\bullet$ with 
\[\cL\subseteq \sF_0,\]
and a reduced divisor $S$ and a geometric VHS defined over $Y\setminus E+S$ so that 
\[\sF_\bullet|_{Y\setminus \Lambda}\subseteq\sE_\bullet\]
where $\Lambda$ is the non-normal crossing locus of $E+S$ and $\sE_\bullet$ is the Higgs bundle defined away from $Z$ associated to the Deligne extesnion with eigenvalues in $[0,1)$ of the VHS. The only problem now is that $\sE_\bullet$ is not globally defined. To fix the problem, we do the following reduction.

One notices that $\sE_\bullet$ remains the same in codimension 1 after adding components to $S$. Hence, we can assume that $S\ge \Delta_\pi$, where $\Delta_\pi$ is the branched divisor of the reduction map $\pi$ in the proof of Theorem  \ref{thm:ggrefine}. We now apply Proposition \ref{prop:liftsections}. To simplify notations, we replace   
$V$ by $\mu^{-1}V$, $\cL$ by $\mu^*\cL$ (hence $\cL$ becomes nef and big), $f$ by $\tilde f$, $E$ by $\mu^*E$ and $S$ by $\mu^{-1}S$. Then we apply Proposition \ref{prop:constructionHiggs} again for the new $f$ and $\cL(E)$ away from the codimension at least 2 subset $T$. By Proposition \ref{prop:liftsections}, we conclude that $s$ and $D$ do not change over $f^{-1}(V\setminus S)$. Hence, the Hodge module $\cM$ and the VHS remain the same over $V\setminus S$. But, $E+S$ becomes normal crossing and hence $\Lambda$ is empty this time. Therefore, we have obtained the required $\sF_\bullet$ and $\sE_\bullet$ with the required properties away from $T$. Since $T$ is of codimension at least $2$ and $\sE_\bullet$ is globally defined, we can replace $\sF_\bullet$ by its reflexive hull globally and the required properties are still true. Now, the proof is accomplished.   
\end{proof}
\begin{remark}\label{rm:main}
In \cite{VZ02,VZ03}, the authors used a weak decomposition theorem of log VHS given by rank one unitary representations under cyclic coverings together with a classic result of Steenbrink \cite[Theorem 2.18]{Ste75} to construct Higgs sheaves. The result gives geometric descriptions of Deligne canonical extensions of geometric VHS in codimension one by using relative log forms. Because of the use of the decomposition theorem and relative forms, their construction requires the canonical bundles of fibers being semi-ample. %This is indeed enough to prove Viehweg hyperbolicity for families of canonically polarized manifolds. 
The base-point freeness of pluricanonical bundles for fibers is also needed in proving Brody hyperbolicity of moduli stack of canonicallly polarized manifolds; see \cite[Lemma 5.4]{VZ03}.
%To be more precise, the base point freeness guarantees that one can choose a section so that a geometric VHS with normal crossing boundary can be constructed from it; .  

It was due to Popa and Schnell \cite{PS17} that using the tautological sections of cyclic coverings and Hodge modules to give an alternative construction of Higgs sheaves. The use of cyclic coverings helps avoid fiberwisely analyzing of base locus of pluricanonical bundles. 
%We generalizes the construction to smooth log pairs case (see Proposition \ref{prop:constructionHiggs}). It is indeed enough to prove Theorem \ref{thm:main} (1). However, when proving algebraic degeneracy, if one wants to apply the method of Viewheg and Zuo as mentioned above for families of pairs, there is few freedom to choose sections even in the log canonical polarized case for two reasons. One is that in general fibers are only required to be of log general type; the other is that since we are considering $\Q$-divisor pairs, we have to round up the coefficients of the divisor in the construction. To circumvent this issue, we use Proposition \ref{prop:liftsections} and then Theorem \ref{thm:refineHiggs} is essential in proving Theorem \ref{thm:main} (2).
Similar constructions are also used in \cite{PS14} and \cite{W17b}.
\end{remark}

\section{Proof of the main Theorem}\label{sect:proofmain}
In this section, we prove our main theorem and its application: Theorem \ref{thm:main} and Corollary \ref{thm:mgn}.
\begin{proof}[Proof of Theorem \ref{thm:main} (1)] 
By Lemma \ref{L:compare var}, after perturbing the coefficient of $D_U$, we are allowed to assume that $f_V\colon (U, D_U)\to V$ is a log smooth family of log general type klt pairs with maximal variation. Since birational modification of $V$ is harmless to log general typeness of $V$, after modifying $f_V$ birationally, we can assume that we have the outputs of Theorem \ref{thm:refineHiggs}.

The Higgs sheaf inclusion $\sF_\bullet\subseteq \sE_\bullet$ gives a commutative diagram for each $k\ge0$
\[\begin{tikzcd}
  \sF_k \arrow[r,"\theta_k"] \arrow[d, hook] & \sF_{k+1}\otimes\Omega^1_Y(\log E) \arrow[d, hook]\\
  \sE_k \arrow[r] &\sE_{k+1}\otimes\Omega^1_Y(\log E+S)
\end{tikzcd}\]
We write $\cK_k=\textup{ker }\theta_k$. By \cite[Theorem A]{PW}, the above diagram tells that $\cK_k^*$ is weakly positive for $k\ge 0$. 

We then follow the strategy in \cite{PS17}, which is essentially due to Viehweg and Zuo \cite{VZ02}. The key point is the construction of the so-called Viehweg-Zuo sheaf. 

To construct it in our situation, we look at a chain of morphisms of $\cO_X$-modules induced by $\theta_\bullet$
\[\cL\hookrightarrow \sF_0\stackrel{\theta_0}{\rarr} \sF_1\otimes \Omega^1_X(\log E)\stackrel{\theta_1\circ \textup{id}}{\rarr} \sF_2\otimes \big(\Omega^1_X(\log E)\big)^{\otimes2}\rarr \cdots .\]
We consider the induced morphism $\phi_p\colon\cL\to\sF_p\otimes \big(\Omega^1_X(\log D_f)\big)^{\otimes p}$ for $p\ge1$. Since $\cL$ is a line bundle and $\sF_p$ is reflexive , this morphism is either injective or $0$.  We pick $p$ to be the smallest integer so that $\phi_p$ is injective and $\phi_{p+1}=0$. Since $\phi_{p+1}=0$, we know $\tau_p$ factors through
\[\cL\hookrightarrow \cK_p \otimes \big(\Omega^1_X(\log E)\big)^{\otimes p}.\]
Hence, we have a nontrivial morphism $\cL\otimes \cK_p^*\rarr \big(\Omega^1_X(\log E)\big)^{\otimes p}.$
Since $\cL$ is big and $\cK_p^*$ is weakly positive, $\cL\otimes \cK_p^*$ is a big torsion-free sheaf. The Viehweg-Zuo sheaf $\cH$ is defined to be the image of  $\cL\otimes \cK_p^*$ inside $ \big(\Omega^1_X(\log D_f)\big)^{\otimes p}$, which is also a big torsion-free sheaf.

Applying a theorem of Campana-P\u aun \cite[Theorem 7.6]{CP15} (see also \cite[Theorem 19.1]{PS17}), we conclude that $\omega_Y(E)$ is of log general type. 
\end{proof}

Let $Y$ be a smooth variety with $\Gamma$ a simple normal crossing divisor on $Y$, and a geometric VHS defined on $Y\setminus \Gamma$. Assume that $\cG_\bullet$ is a graded $\sA_Y^\bullet(-\log \Gamma)$-module.
The graded $\sA_Y^\bullet(-\log \Gamma)$-module structure induces the Kodaira-Spencer map 
\[\rho_k^q\colon\textup{Sym}^q(\shTA_Y(-\log~\Gamma))\rarr \mathcal{H}\textup{om}(\cG_k,\cG_{k+q})\]
for every $k$ and $q>0$.

Recently, Deng (\cite[Theorem C]{Den18b}) proves that for Deligne extensions of geometric VHS, the Kodaira-Spencer maps have some Torelli-type property when some positive requirement are satisfied. We present here a slightly stronger version using our terminology, which can be proved by essentially the same arguments as in $loc.$ $cit.$ 
\begin{theorem}[Deng]\label{th:Deng}
Let $Y$ be a smooth projective variety with $\Gamma$ a simple normal crossing divisor on $Y$. Assume that $\sE_\bullet$ is the Higgs bundle associated to a Deligne extension of a VHS defined over $Y\setminus \Gamma$ with non-negative eigenvalues along every component of $\Gamma$ and $\cL$ is a nef and big divisor on Y. If $\cL\subseteq \sE_k$ for some $k$, then the composed sheaf morphism 
\[\tau\colon\shTA_Y(-\log~\Gamma)\stackrel{\rho_k^1}{\rarr} \sE_k^{-1}\otimes\sE_{k+1}\to \cL^{-1}\otimes \sE_{k+1}\]
is injective.
\end{theorem}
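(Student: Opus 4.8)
The plan is to argue by contradiction, following the argument of Deng \cite{Den18b}. Since $\shTA_Y(-\log\Gamma)$ is locally free, $\tau$ fails to be injective precisely when $\ker\tau$ is a nonzero (automatically saturated) subsheaf $\cD\subseteq\shTA_Y(-\log\Gamma)$, of some rank $r\ge1$; let $W\subseteq\Omega^1_Y(\log\Gamma)$ be its annihilator, a \emph{proper} saturated subsheaf. Unwinding the definition, $\cD\subseteq\ker\tau$ means exactly that the composite $\cL\hookrightarrow\sE_k\xrightarrow{\theta_k}\sE_{k+1}\otimes\Omega^1_Y(\log\Gamma)$ already takes values in $\sE_{k+1}\otimes W$. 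The first step is to propagate this degeneracy up the Higgs tower: from $\theta\wedge\theta=0$ one gets, for any two vector fields $v,w$ with logarithmic zeros, the identity $\theta_{k+1}(v)\,\theta_k(w)=\theta_{k+1}(w)\,\theta_k(v)$ as maps $\sE_k\to\sE_{k+2}$; taking $v\in\cD$ (so $\theta_k(v)|_\cL=0$) shows $\theta_{k+1}(v)$ kills the coefficient span of $\theta_k(\cL)$, and iterating shows that the sub-Higgs-sheaf $\cV_\bullet\subseteq\sE_\bullet$ generated by $\cL$ (so $\cV_j$ is the saturated span of the iterated Higgs-field images of $\cL$, with $\cV_j=0$ for $j<k$) has Higgs field factoring through $\cV_\bullet\to\cV_{\bullet+1}\otimes W$. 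Thus a non-injective $\tau$ forces the nef and big line bundle $\cL$ to sit at the bottom of a sub-Higgs-sheaf of a geometric VHS whose Higgs field only "sees" the proper subsheaf $W$ of log cotangent directions; the bulk of the work is to rule this out.

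The second ingredient is the Hodge metric. Because $(\sE_\bullet,\theta_\bullet)$ underlies the Deligne extension with eigenvalues in $[0,1)$ of a polarizable $\bbC$-VHS on $Y\setminus\Gamma$ with non-negative residues, $\theta_\bullet$ has at worst logarithmic poles along $\Gamma$ and the Hodge metric $h$ obeys the Cattani--Kaplan--Schmid / Mumford norm estimates; in particular $\theta_\bullet$ is bounded with respect to $h$ and a metric on $\Omega^1_Y(\log\Gamma)$ of logarithmic growth, so the curvature computations and integrations by parts below make sense across $\Gamma$. Hitchin's equation (flatness of the Gauss--Manin connection $D_h+\theta_\bullet+\theta_\bullet^\dagger$) gives the curvature identity $\sqrt{-1}\,\Theta_h(\sE_\bullet)=-(\theta_\bullet\wedge\theta_\bullet^\dagger+\theta_\bullet^\dagger\wedge\theta_\bullet)$; restricting along $\cL\subseteq\cV_k\subseteq\sE_k$ and using that curvature decreases on subsheaves, one obtains, for a local $h$-unit frame $e$ of $\cL$,
\[
\sqrt{-1}\,\Theta_{h_\cL}(\cL)(v,\bar v)\ \le\ \big\|\theta_{k-1}^\dagger(v)\,e\big\|_h^2\ -\ \big\|\theta_k(v)\,e\big\|_h^2 .
\]
Running this over the whole tower $\cV_\bullet$ with the To--Yeung/Schumacher weights, adapted to the logarithmic geometric-VHS setting exactly as in \cite{Den18b}, makes the positive "incoming" terms telescope against the negative "outgoing" ones and produces a (possibly singular) metric on $\cL$ whose curvature in a direction $v$ is dominated by $-c\,\|\theta_k(v)\,e\|_h^2$ modulo a controlled remainder; in particular this curvature is strictly negative in every direction transverse to $\cD$.

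Finally I would bring in that $\cL$ is nef and big. Cutting $Y$ down by a general complete-intersection curve $C$ — general enough to miss $\Gamma$ and the various degeneracy loci and to be transverse to $\cD$ at its general points — nefness and bigness give $\deg(\cL|_C)>0$, while computing that degree with the metric constructed above gives $\deg(\cL|_C)\le -c\int_C\|\theta_k(\dot C)\,e\|_h^2\le 0$, since $\dot C\notin\cD$ at the general point of $C$ forces $\theta_k(\dot C)|_\cL\neq0$; this contradiction yields $\ker\tau=0$. I expect the real obstacle to lie not in this outline but in its analytic underpinning: making the curvature identity, the subsheaf-curvature inequality and the integration by parts rigorous uniformly up to and across the boundary divisor $\Gamma$, and choosing the telescoping weights so that the remainder term is genuinely controlled — this is precisely the technical core of \cite{Den18b}, which carries over here essentially verbatim, the logarithmic/pair features entering only through the logarithmic Deligne extension of the VHS.
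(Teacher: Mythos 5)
Your proposed argument, while correctly identifying the curvature formula and the log-growth of the Hodge metric as the essential ingredients, contains a serious error in its final step that makes the proof collapse. Let me point out two related problems.

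First, a general complete-intersection curve $C$ in a projective variety $Y$ of dimension $\ge 2$ \emph{cannot} avoid $\Gamma$ when $\Gamma\neq\emptyset$: it meets $\Gamma$ in precisely $C\cdot\Gamma>0$ points. The Hodge metric $h$ on $\sE_\bullet$, and hence the induced metric $h_\cL$ on $\cL$, degenerates along $\Gamma$; so the current $\sqrt{-1}\,\Theta_{h_\cL}$ has singular boundary contributions at $C\cap\Gamma$ that must enter the degree formula. These are precisely the terms governed by the Cattani--Kaplan--Schmid/Mumford estimates and by the hypothesis that the residue eigenvalues along $\Gamma$ are non-negative (this is where that hypothesis is used), and they cannot be dismissed by saying the curve "misses $\Gamma$."

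Second, and more fundamentally, the inequality you write down,
\[
\deg(\cL|_C)\ \le\ -c\int_C\|\theta_k(\dot C)\,e\|_h^2\ \le\ 0,
\]
never uses the assumption $\cD\neq 0$: it is supposedly valid whenever $\dot C\notin\cD$ at the general point of $C$, which is automatic for every curve in the case $\cD=0$. So the argument as written would yield $\deg(\cL|_C)\le 0$ for a general curve $C$ even when $\tau$ \emph{is} injective, contradicting nefness and bigness of $\cL$ outright. This tells you that something compensating is missing. In the Griffiths curvature formula for $\cL\subseteq\sE_k$ one has the negative outgoing term $-\theta_k\wedge\theta_k^\dagger$ but also the positive incoming term $\theta_{k-1}^\dagger\wedge\theta_{k-1}$ (the paper's remark calls attention to exactly this extra $\theta\wedge\theta^*$-term when $k>0$); these, together with the singular boundary contributions, are what keep the Hodge degree of $\cL$ from being forced to be $\le 0$ when $\cL$ is nef and big. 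Your "telescoping with To--Yeung/Schumacher weights" is meant to absorb the positive terms, but such weighted constructions produce a negatively curved (Finsler) metric on $\shTA_Y(-\log\Gamma)$, not on $\cL$, and deriving the Torelli statement from that metric requires showing precisely that the Finsler metric is nondegenerate — which is equivalent to injectivity of $\tau$ and hence circular in your outline. Deng's proof instead turns the hypothesis that $\tau$ has a kernel at a given point into a sign statement on the curvature in a particular direction at that point, and contrasts this against the global strict negativity coming from bigness via a maximum-principle argument; this is qualitatively different from a flat degree inequality on a general curve.

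For comparison: the paper does not reprove Deng's theorem at all. It cites \cite[Theorem C]{Den18b} and only explains why the two extra generalities — allowing $\cL\subseteq\sE_k$ for arbitrary $k$, and allowing non-negative rather than zero residue eigenvalues along $\Gamma$ — do not break Deng's argument (the extra curvature term $\theta_{k+1}\wedge\theta^*_{k+1}$ is semi-positive, and the Hodge norm estimates are unchanged). Your propagation of the degeneracy along the Higgs tower via $\theta\wedge\theta=0$ is correct as a lemma but is not used in the rest of your outline and is not part of Deng's proof.
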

Deng's proof relies on the asymptotic behavior of Hodge metric at infinity (cf. \cite[Lemma 3.2]{PTW18}), which is still the same when the eigenvalues along $\Gamma$ are only positive. The proof originally deals with the case when $\cL$ is contained in the initial term of the Higgs bundle. When $\cL\subseteq \sE_k$ for some arbitrary $k$, one needs to consider additionally the term $\theta_{k+1}\wedge \theta^*_{k+1}$ when applying Griffiths curvature formula, where $\theta_k$ is the $k$-th (twisted) Higgs field and $\theta^*_k$ is its adjoint with respect to the (twisted) Hodge metric. But $\theta_{k+1}\wedge \theta^*_{k+1}$ is semi-positive, which is harmless to Deng's arguments; see \cite[Proof of Theorem C]{Den18b} for details. Therefore, Deng's theorem holds in the more general situation as above.  
 
Applying the above theorem to the Higgs bundle in Theorem \ref{thm:refineHiggs}, one immediately obtains the following corollary.
\begin{coro}\label{cor:injtau}
In the situation of Theorem \ref{thm:refineHiggs}, the induced composed morphism 
\[\tau_1\colon \shTA_Y(-\log E)\stackrel{}{\rarr}\mathcal{H}\textup{om}(\sF_0,\sF_1)\to \sL^{-1}\otimes \sE_1\]
is injective.
\end{coro}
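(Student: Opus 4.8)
The plan is to deduce the corollary directly from Deng's theorem (Theorem \ref{th:Deng}); the only subtlety is that the logarithmic poles in the two statements are taken along different divisors. In the situation of Theorem \ref{thm:refineHiggs} we are given a nef and big line bundle $\cL$ on the smooth projective variety $Y$, the simple normal crossing divisor $E+S$, and the Higgs bundle $(\sE_\bullet,\theta_\bullet)$ attached to the Deligne extension with eigenvalues in $[0,1)$ of a geometric VHS on $Y\setminus(E+S)$; by property (a) together with the inclusion $(\sF_\bullet,\theta_\bullet)\subseteq(\sE_\bullet,\theta_\bullet)$ we have $\cL\subseteq\sF_0\subseteq\sE_0$. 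Since eigenvalues in $[0,1)$ are in particular non-negative, Theorem \ref{th:Deng} applies with $\Gamma=E+S$ and $k=0$, and gives that the composition
\[\tau\colon\shTA_Y(-\log(E+S))\xrightarrow{\rho_0^1}\sE_0^{-1}\otimes\sE_1\longrightarrow\cL^{-1}\otimes\sE_1\]
is injective.

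Next I would identify $\tau$ with $\tau_1$ over a dense open set. The inclusion $(\sF_\bullet,\theta_\bullet)\subseteq(\sE_\bullet,\theta_\bullet)$ together with $\cL\subseteq\sF_0$ fits into the commutative square
\[\begin{tikzcd}
\sF_0\arrow[r,"\theta_0"]\arrow[d,hook] & \sF_1\otimes\Omega^1_Y(\log E)\arrow[d,hook]\\
\sE_0\arrow[r] & \sE_1\otimes\Omega^1_Y(\log(E+S)),
\end{tikzcd}\]
and over $Y\setminus S$ the divisors $E$ and $E+S$ coincide, so that $\shTA_Y(-\log E)$, $\Omega^1_Y(\log E)$ and the composite inclusion $\cL\hookrightarrow\sF_0\hookrightarrow\sE_0$ restrict respectively to $\shTA_Y(-\log(E+S))$, $\Omega^1_Y(\log(E+S))$ and $\cL\hookrightarrow\sE_0$. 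Taking adjoints of the square, precomposing with $\cL\hookrightarrow\sF_0$ and postcomposing with $\sF_1\hookrightarrow\sE_1$, one reads off that $\tau_1|_{Y\setminus S}=\tau|_{Y\setminus S}$; in particular $\tau_1$ is injective over $Y\setminus S$.

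Finally I would promote this to injectivity on all of $Y$. Because $E$ is simple normal crossing, $\shTA_Y(-\log E)$ is locally free, hence torsion-free, and therefore so is its subsheaf $\ker\tau_1$; by the previous step $\ker\tau_1$ is supported on the proper closed subset $S$, and thus vanishes. This last passage is the one point requiring care: one cannot simply restrict the injectivity of $\tau$ to a subsheaf of its source, since the natural inclusion $\shTA_Y(-\log(E+S))\subseteq\shTA_Y(-\log E)$ goes the opposite way; instead one uses that injectivity of a morphism out of a torsion-free sheaf may be tested on a dense open subset.
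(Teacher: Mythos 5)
Your proof is correct and takes the same route the paper intends: apply Theorem~\ref{th:Deng} to $(\sE_\bullet,\theta_\bullet)$ with $\Gamma=E+S$ and $k=0$, using $\cL\subseteq\sF_0\subseteq\sE_0$. The paper dismisses the remaining step as ``immediate,'' but you correctly identify and resolve the genuine subtlety: Deng's theorem yields injectivity of a map out of $\shTA_Y(-\log(E+S))$, which is a \emph{subsheaf} of $\shTA_Y(-\log E)$, so one must extend rather than restrict; your density argument (identify $\tau_1$ with $\tau$ over $Y\setminus S$, then use torsion-freeness of $\shTA_Y(-\log E)$ to kill $\ker\tau_1$) is exactly what is needed, and your use of the commutative square coming from the inclusion of graded $\sA^\bullet_Y(-\log(E+S))$-modules in Theorem~\ref{thm:refineHiggs} to justify $\tau_1|_{Y\setminus S}=\tau|_{Y\setminus S}$ is the right way to make the identification precise.
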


\begin{proof}[Proof of Theorem \ref{thm:main} (2)]
As in the proof of the first statement, we are free to assume that $f_V\colon (U, D_U)\to V$ is a log smooth family of log general type klt pairs with maximal variation. We assume on the contrary $\gamma\colon \bbC\to V$ is a holomorphic map into the base $V$ with Zariski-dense image. Using the lifting property of entire curves with Zariski-dense images through birantional morphisms (see for instance \cite[Lemma 3.9]{PTW18}), we can assume that we are in the situation of Theorem \ref{thm:refineHiggs}. The differential map of $\gamma$ induces a composed morphism
\[\tau_{(\gamma,1)}\colon \shTA_\bbC \to  \gamma^*(\shTA_Y(-\log E))\stackrel{\gamma^*(\tau_1)}{\rarr} \gamma^*(\cL^{-1}\otimes \sE_1).\]
From Corollary \ref{cor:injtau}, $\tau_1$ is injective. Since $\gamma$ has dense image, $\gamma^{*}(\tau_1)$ is also injective (see for instance \cite[Lemma 3.13]{PTW18}). Therefore, $\tau_{(\gamma,1)}$ is also injective. But this contradicts to \cite[Proposition 3.5]{PTW18} (notice that the output of Theorem \ref{thm:refineHiggs} is the same as that of \cite[Proposition 2.5]{PTW18}). To be more precise, the injectivity of $\tau_{(\gamma,1)}$ together with the positivity of $\cL$ would give a negative singular metric on $\bbC$ contradicting the Ahlfors-Schwarz Lemma; see the proof of \cite[Proposition 3.5]{PTW18} for details.
\end{proof}

\subsection*{Remarks on Kobayashi hyperbolicity}\label{scn:kobayashi}
In \cite{Den18, Den18b}, Deng proves the Kobayashi hyperbolicity of bases of efffectively parametrized families of minimal projective manifolds of general type and pseudo Kobayashi hyperbolicity of bases of families of projective manifolds with semi-ample canonical bundles and maximal variation. The latter further implies Brody hyperbolicity of the moduli of polarized manifolds.  We briefly explain here how Deng's approach could apply to our situation.

The key point is the construction of a negative metric from a geometric VHS containing some positive line bundle, when assuming the generic Torelli property for the Kodaira-Spencer map of the VHS. This is done as in \cite[\S 3]{Den18}; please see $loc.$ $cit.$ for definitions and details, including an explanation on how this uses the Griffiths curvature formula for (twisted) Hodge bundles and the work of Schumacher, To-Yeung and Berndtsson-P{\u a}un-Wang \cite{Sch12, Sch17, TY15, BPW17}; see also \cite{Den18b}.

Recall first that a singular metric $h$ on a vector bundle is said to have semi-negative curvature if the function $\log \|s\|_h$ is plurisubharmonic for every local section $s$ of the vector bundle; cf. e.g. \cite[Definition 18.1]{HPS}. From the output of Theorem \ref{thm:refineHiggs}, applying Corollary \ref{cor:injtau} and \cite[Theorem 3.8]{Den18} yields:

\begin{prop}\label{thm:negcurmetric}
In the situation of Theorem \ref{thm:refineHiggs}, there exists a singular metric $F$ on $\shTA_Y(-\log E)$, with semi-negative curvature, such that on the locus where $F$ is smooth its holomorphic sectional curvature is bounded from above by a negative constant. 
\end{prop}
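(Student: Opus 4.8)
The plan is to realize Proposition~\ref{thm:negcurmetric} as a direct application of Deng's general machinery for producing Finsler-type negatively curved metrics out of a geometric VHS that contains a positive line bundle in its Higgs bundle. The starting data are exactly the outputs of Theorem~\ref{thm:refineHiggs}: a smooth projective compactification $Y$ of $V$ with $E=Y\setminus V$ simple normal crossing, a nef and big line bundle $\cL$, the graded $\sA_Y^\bullet(-\log E)$-module $(\sF_\bullet,\theta_\bullet)$ with $\cL\subseteq\sF_0$, and the Higgs bundle $(\sE_\bullet,\theta_\bullet)$ associated to the Deligne extension with eigenvalues in $[0,1)$ of a geometric VHS defined off $E+S$. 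Corollary~\ref{cor:injtau} already provides the crucial input, namely that the composed Kodaira--Spencer map
\[
\tau_1\colon \shTA_Y(-\log E)\rarr \mathcal{H}\textup{om}(\sF_0,\sF_1)\to \sL^{-1}\otimes\sE_1
\]
is injective. First I would record that $\sE_\bullet$, being the Higgs bundle of a Deligne extension with eigenvalues in $[0,1)$, in particular has non-negative eigenvalues along every component of $E$; thus it fits the hypotheses of the curvature estimates in \cite[\S 3]{Den18}.

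Next I would feed $\tau_1$ and the positivity of $\cL$ into \cite[Theorem 3.8]{Den18}. Concretely, the twisted Hodge metric on $\sE_\bullet$ (and the induced metric on $\sL^{-1}\otimes\sE_1$) together with the injection $\tau_1$ pulls back to give a possibly singular Finsler pseudo-metric $F$ on $\shTA_Y(-\log E)$, defined by pairing a tangent vector with its image under $\tau_1$ and measuring the length in the Hodge metric twisted by a fixed smooth positively curved metric on $\cL$. The content of Deng's theorem is precisely that such an $F$: (i) has semi-negative curvature in the sense that $\log\|s\|_F$ is plurisubharmonic for every local holomorphic section $s$ of $\shTA_Y(-\log E)$ — this uses the Griffiths curvature formula for twisted Hodge bundles, where the potentially obstructive cross term $\theta_{2}\wedge\theta_2^*$ is semi-positive and hence harmless, exactly as in the discussion following Theorem~\ref{th:Deng}; and (ii) on the open locus where $F$ is a smooth Hermitian metric, its holomorphic sectional curvature is bounded above by a negative constant, the negativity coming from the strict positivity of $\cL$ combined with the injectivity of $\tau_1$ (so the "derivative'' term in the curvature formula dominates). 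I would state this as the conclusion, after checking that the codimension-$\ge 2$ loci where $\sF_\bullet$ or $\sE_\bullet$ might degenerate do not affect the plurisubharmonicity statement, since plurisubharmonic functions extend across analytic subsets of codimension $\ge 2$ and a semi-negatively curved singular metric is determined by its values on a Zariski-open set.

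The main obstacle, and the only place requiring genuine care rather than citation, is verifying that the hypotheses of \cite[Theorem 3.8]{Den18} are met verbatim in the present logarithmic-pair setting: Deng works with Deligne extensions of VHS over the complement of a normal crossing divisor, with the line bundle contained in some graded piece $\sE_k$, whereas here $\cL\subseteq\sF_0\subseteq\sE_0$ is in the initial piece, which is the easier case, but the VHS is only \emph{geometric} (arising from the family of fibers) and the eigenvalue condition along $E$ must be checked to be non-negative rather than strictly within $[0,1)$ after all the birational modifications of Theorem~\ref{thm:refineHiggs}. Once one confirms — as is already implicitly used in Corollary~\ref{cor:injtau} via Theorem~\ref{th:Deng} — that the eigenvalues lie in $[0,1)$ and hence in particular are non-negative, Deng's curvature computation applies word for word; the singular-metric bookkeeping near $E$ and near the non-normal-crossing loci is routine and follows the asymptotic analysis of the Hodge metric recalled in \cite[Lemma 3.2]{PTW18}. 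Thus the proof reduces to: (1) invoke Theorem~\ref{thm:refineHiggs} and Corollary~\ref{cor:injtau}; (2) observe the eigenvalue/positivity hypotheses of \cite[Theorem 3.8]{Den18} hold; (3) apply that theorem to produce $F$; (4) extend the semi-negativity statement across codimension-$\ge 2$ loci. I expect step (2) to be the only one needing a sentence or two of justification, and the rest to be immediate.
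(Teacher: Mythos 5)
Your proposal is exactly the paper's argument: the paper gives no formal proof and simply states that the proposition follows "from the output of Theorem~\ref{thm:refineHiggs}, applying Corollary~\ref{cor:injtau} and \cite[Theorem 3.8]{Den18}," which is precisely the chain you describe. Your elaboration of why the hypotheses of Deng's theorem hold (eigenvalues in $[0,1)$ hence non-negative, $\cL$ nef and big, $\tau_1$ injective) and the extension across codimension-$\ge 2$ loci is a correct unpacking of that citation; the only small slip is writing $\theta_2\wedge\theta_2^*$ where, since $\cL\subseteq\sF_0\subseteq\sE_0$ so $k=0$, the relevant cross term is $\theta_1\wedge\theta_1^*$ -- but as you correctly note this is the easy case Deng originally treated, so nothing is affected.
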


Recall that for a quasi-projective variety $V$, its exceptional locus is defined as 
\[\textup{Exc}(V)\coloneqq\overline{(\bigcup\gamma(\bbC))},\]
where the union is taken over all non-constant holomorphic maps $\gamma\colon \bbC\to V$ and the closure is in the Zariski topology. Standard facts about the non-degeneracy of the Kobayashi-Royden pseudo-metric imply then the following  statement, which in particular gives another proof of (a slightly stronger version of) Theorem \ref{thm:main} (2):

\begin{coro}\label{cor:another}
Let $f_V\colon U \to V$ be a log smooth family of log general type klt pairs, having maximal variation.
Then $V$ is Kobayashi hyperbolic away from a proper subvariety $Z$ of $V$ (that is, the Kobayashi-Royden pseudo-metric is nondegenerate away from $Z$). In particular ${\rm Exc}(V)$ is a proper 
subset of $V$.
\end{coro}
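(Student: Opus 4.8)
The plan is to deduce the statement from Proposition \ref{thm:negcurmetric} by a standard Ahlfors–Schwarz/Kobayashi-metric argument, after reducing to the situation where the conclusions of Theorem \ref{thm:refineHiggs} are available. First I would observe that the statement is insensitive to the birational modifications used in Theorem \ref{thm:refineHiggs}: a proper birational morphism $\mu\colon Y'\to Y$ restricting to an isomorphism over a dense open set, and the lifting property for entire curves with Zariski-dense image (as in \cite[Lemma 3.9]{PTW18}), show that if $V'=\mu^{-1}V$ is Kobayashi hyperbolic modulo a proper subvariety, then so is $V$. Hence I may assume we are exactly in the setting of Theorem \ref{thm:refineHiggs}, so that Proposition \ref{thm:negcurmetric} furnishes a singular metric $F$ on $\shTA_Y(-\log E)$ with semi-negative curvature and, where smooth, holomorphic sectional curvature bounded above by a negative constant.

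The core step is the following: let $U$ be the (nonempty Zariski open) locus where $F$ is smooth and nondegenerate. For any holomorphic map $\gamma\colon \D\to V$ (or $\bbC\to V$) whose image meets $U$, the pullback $\gamma^*F$ induces a semi-negatively curved pseudo-metric on the source whose holomorphic sectional curvature is $\le -c<0$ wherever it is positive; the Ahlfors–Schwarz lemma (in the form used in the proof of \cite[Proposition 3.5]{PTW18}) then bounds the infinitesimal Kobayashi–Royden pseudo-norm of $\gamma'$ from below at such points. Letting $Z$ be the union of $Y\setminus U$ with the indeterminacy/exceptional loci accumulated in the reduction, $Z$ is a proper subvariety of $V$, and the above bound says precisely that the Kobayashi–Royden pseudo-metric of $V$ is nondegenerate at every point of $V\setminus Z$. (One must take a little care that the negativity of the holomorphic sectional curvature of a semi-negatively curved — not necessarily smooth — metric still yields a genuine distance-decreasing statement; this is exactly the technical content packaged in \cite[Theorem 3.8]{Den18} and the curvature computations of Schumacher and To–Yeung, which is why I invoke Proposition \ref{thm:negcurmetric} as a black box rather than redo it.)

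For the last assertion, $\mathrm{Exc}(V)$ is by definition the Zariski closure of the union of all non-constant entire curves in $V$. By the nondegeneracy of the Kobayashi–Royden pseudo-metric on $V\setminus Z$, Brody's lemma / the standard fact that Kobayashi hyperbolicity obstructs entire curves forces every non-constant $\gamma\colon \bbC\to V$ to have image contained in $Z$; hence $\mathrm{Exc}(V)\subseteq Z$ is a proper subvariety, and in particular no entire curve is Zariski-dense, recovering (a slightly stronger form of) Theorem \ref{thm:main}(2).

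The main obstacle I expect is the bookkeeping of the exceptional locus: assembling $Z$ so that it is a genuine proper \emph{subvariety of $V$} after composing the birational base change of Theorem \ref{thm:refineHiggs}, the reduction in Proposition \ref{prop:liftsections}, and descending from $Y'$ back to $Y$ — and verifying that the degeneracy locus of $F$ from Proposition \ref{thm:negcurmetric} pulls back/pushes forward to something still proper over the original $V$. The analytic heart — passing from a semi-negatively curved metric with negatively pinched holomorphic sectional curvature to nondegeneracy of the Kobayashi–Royden pseudo-metric — is already contained in \cite[Theorem 3.8]{Den18}, so I would cite it rather than reprove it.
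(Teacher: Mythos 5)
Your proposal is correct and takes essentially the same route as the paper, which itself leaves the argument at the level of ``Proposition \ref{thm:negcurmetric} together with standard facts about the non-degeneracy of the Kobayashi--Royden pseudo-metric imply the statement.'' You have simply unwound that sentence: reduce via the birational modification and curve-lifting to the setting of Theorem \ref{thm:refineHiggs}, feed the semi-negatively curved singular metric from Proposition \ref{thm:negcurmetric} into the Ahlfors--Schwarz/\cite[Theorem 3.8]{Den18} machinery to bound the Kobayashi--Royden norm from below off the degeneracy locus, and then observe that ${\rm Exc}(V)$ must lie inside that proper subvariety; your bookkeeping of the exceptional locus through the reductions is exactly the care the paper implicitly assumes.
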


%Let us also remark that with the method of this paper, which avoids the connection with the Kodaira-Spencer map, it is %not clear to us how to obtain the Kobayashi hyperbolicity of the entire base $V$ in the general setting of Corollary \ref{cor:another}, even when we require the stronger condition that the family be effectively parametrized.

We conclude by noting that Brunebarbe and Cadorel \cite[Theorem 1.6]{BC} have obtained a criterion for a compact complex manifold to be of log general type, in terms of the existence of a semi-negatively curved singular metric on 
$\shTA_Y(-\log E)$,
with negative holomorphic sectional curvature bounded above generically by a negative constant. 
Applying this and Proposition \ref{thm:negcurmetric} gives an alternative way to prove Theorem \ref{thm:main} (1).

\bibliographystyle{amsalpha}
\bibliography{mybib}
\end{document}